\newcommand{\ptoo}{\rightsquigarrow}
\def\emdash{\text{-}}
\def\textEM{Ei\-len\-berg--Mo\-ore\@\xspace}
\let\too\longrightarrow
\def\slice{/\!\!/}
\NewDocumentCommand{\en}{ m O{} }{[#1,#1]_{#2}}
\def\ie{i.e.\@\xspace}
\NewDocumentCommand{\pairUp}{ O{r} m m }{
\ar[#1]_-{#3}
\ar@<.5em>@{<-}[#1]^-{#2}
}
\NewDocumentCommand{\pairDown}{ O{r} m m }{
\ar[#1]^-{#2}
\ar@<-.5em>@{<-}[#1]_-{#3}
}
\NewDocumentCommand{\pair}{ O{r} m m O{\perp} }{
\ar@<-.5em>@{<-}[#1]_-{#3}
\ar@{}[#1]|-{#4}
\ar@<.5em>[#1]^-{#2}
}
\NewDocumentCommand{\oppair}{ O{r} m m O{\perp} }{
\ar@<-.5em>[#1]_-{#3}
\ar@{}[#1]|-{#4}
\ar@{<-}@<.5em>[#1]^-{#2}
}
\NewDocumentCommand{\barealg}{ o m m O{} }{
	\IfNoValueTF{#1}
	{#2 \ltimes_{#4} #3 }
	{#2 \ltimes^{#1}_{#4} #3 }
}
\NewDocumentCommand{\emalg}{ m m }{\barealg[\sfE\sfM]{#1}{#2}}
\NewDocumentCommand{\barecoalg}{ o m m O{} }{
	\IfNoValueTF{#1}
	{#2 \oright_{#4} #3 }
	{#2 \oright^{#1}_{#4} #3 }
}
\def\falgFibre#1#2{\barealg{\{#1\}}#2}
\newcommand{\klalg}[2]{\barealg[\sfK\sfl]{#1}{#2}}
\newcommand{\coklalg}[2]{\barecoalg[\sfK\sfl]{#1}{#2}}
\NewDocumentCommand{\id}{ O{} }{\mathrm{id}_{#1}}
\newcommand{\ladj}[1]{#1_L}
\newcommand{\radj}[1]{#1_R}
\newcommand{\pointsTotal}[1]{\cate{Pt}(#1)}
\NewDocumentCommand{\triple}{ O{r} m m m }{
\ar[#1]|-{#3}
\ar@<-.5em>@{<-}[#1]_-{#4}
\ar@<.5em>@{<-}[#1]^-{#2}
}
\newcommand{\Poly}[1]{\cate{Poly}_{#1}}
\newcommand{\initial}{\emptyset}
\newcommand{\terminal}{\ast}
\newcommand{\defn}[1]{\emph{#1}} 
\def\cate#1{\mathsf{#1}}
\setlist[itemize,enumerate]{itemsep=0pt}
\begin{document}
\title{Fibrations of algebras}
\author[Ahman]{Danel Ahman}
\address{{Department of Mathematics}, {University of Tartu}}
\email{danel.ahman@ut.ee}
\email{ulo.reimaa@ut.ee}

\author[Coraglia]{Greta Coraglia}
\address{{Dipartimento di Filosofia}, {Università degli Studi di Milano}}
\email{greta.coraglia@unimi.it} 

\author[Castelnovo]{Davide Castelnovo}
\address{{Dipartimento di Matematica `Tullio Levi-Civita'}, {Università degli Studi di Padova}}
\email{davide.castelnovo@math.unipd.it}

\author[Loregian]{Fosco Loregian}
\address{{Department of Cybernetics}, {Tallinn University of Technology}}
\email{folore@taltech.ee}

\author[Martins-Ferreira]{{Nelson} {Martins-Ferreira}}
\address{{Politécnico de Leiria}}
\email{martins.ferreira@ipleiria.pt}

\author[Reimaa]{{\"Ulo} {Reimaa}}

\begin{abstract}
We study fibrations arising from indexed categories of the following form: fix two categories $\mathcal{A},\mathcal{X}$ and a functor $F :  \mathcal{A} \times \mathcal{X} \longrightarrow\mathcal{X} $, so that to each $F_A=F(A,-)$ one can associate a category of algebras $\mathbf{Alg}_\mathcal{X}(F_A)$ (or an \textEM, or a Kleisli category if each $F_A$ is a monad). We call the functor $\int^{\mathcal{A}}\mathbf{Alg} \to \mathcal{A}$, whose typical fibre over $A$ is the category $\mathbf{Alg}_\mathcal{X}(F_A)$, the \emph{fibration of algebras} obtained from $F$.
Examples of such constructions arise in disparate areas of mathematics, and are unified by the intuition that $\int^\mathcal{A}\mathbf{Alg} $ is a form of \emph{semidirect product} of the category $\mathcal{A}$, acting on $\mathcal{X}$, via the `representation' given by the functor $F : \mathcal{A} \times \mathcal{X} \longrightarrow\mathcal{X}$.
After presenting a range of examples and motivating said intuition, the present work focuses on comparing a generic fibration with a fibration of algebras: we prove that if $\mathcal{A}$ has an initial object, under very mild assumptions on a fibration $p : \mathcal{E}\longrightarrow \mathcal{A}$, we can define a canonical action of $\mathcal{A}$ letting it act on the fibre $\mathcal{E}_\varnothing$ over the initial object. This result bears some resemblance to the well-known fact that the fundamental group $\pi_1(B)$ of a base space acts naturally on the fibers $F_b = p^{-1}b$ of a fibration $p : E \to B$.
\end{abstract}

\maketitle

\section{Introduction}
The present paper is part of a project trying to build a general theory of \emph{parametrized functors}, studying how they arise in different parts of category theory and neighbouring subjects. A \emph{parametrized (endo)functor} is understood simply as a functor of type $F : \params\times\carriers \too \carriers$, so that for every fixed object $\param$ of a category $\params$ we have a functor $F(\param,\firstblank)=F_\param : \carriers \too \carriers$, and for every morphism $u : A\to A'$ in $\params$, a natural transformation between $F_A$ and $F_{A'}$.

For such a functor $F$, we call the category $\params$ the \emph{category of parameters} of $F$, while $\carriers$ is the \emph{category of carriers}. We can also view a parameterized endofunctor as a functor $\params \to \en\carriers$ into the category of endofunctors on $\carriers$. Replacing the latter category with the category of (co)pointed endofunctors, the category of (co)monads, etc., allows us to easily define parametrized \emph{pointed} functors, parametrized \emph{(co)monads}, etc.

In a certain sense -- which will be made precise below and in \autoref{subsec:act} -- a parametrized functor $F: \params\times\carriers \too \carriers$ can be viewed as a representation of $\params$ over $\carriers$; as such, it will be fruitful to consider invariants attached to $F$ that are inspired by representation theory.

One straightforward procedure associating to $F$ an invariant that keeps track of its action is the following: consider each functor $F_\param$ separately, but `glue' together all categories of $F_\param$-algebras (objects with an action of $F_A$, in an evident sense) in a \emph{bundle} (the total category of a fibration), indexed over the parameters $\params$, so that the fibre over $\param\in\params$ is precisely the category of $F_\param$-algebras, where objects are pairs $(X,\xi : F_\param X\to X)$ and morphisms $(X,\xi)\to(Y,\theta)$ are maps between the carriers $X,Y$ that are compatible with the structure maps $\xi,\theta$.

This total category is precisely the fibered category $\var[p^F]{\grothCont[\params]\Alg}\params$ obtained by applying the \emph{Grothendieck construction} to the pseudofunctor $\params^\op \too \Cat$ sending $\param\mapsto\Alg(F_\param)$.

Arguing similarly for parametrized (co)monads $T : \params\times\carriers\too\carriers$, and for the {pseudo}functor sending $T_\param$ to its (co)\textEM category, yields the \emph{(op)fi\-bra\-tion of \textEM (co)algebras} of $T$.

Although there might be other ways to attach to $F$ or $T$ a category describing their actions on $\carriers$, the fibred category so obtained (the `fibration of algebras' in the title) turns out to be defined canonically, to be very rich in structure, and quite descriptive of the action of $F$ (thanks to the equivalence prescribed by the Grothendieck construction).

In fact, besides the fibration $p^F$ itself, the category $\grothCont[\params]\Alg$ comes equipped with a `carrier' functor $V:\grothCont[\params]\Alg\too\carriers$ that forgets the $F_A$-algebra structure on an object, thus we have a span of functors
\[\label{fundamental_span}
	\vxy{
	\carriers & \ar[l]_-V \grothCont[\params]\Alg \ar[r]^-{p^F} & \params.
	}\]
This diagram will induce, in turn, a canonical functor $\langle p^F,V\rangle : \grothCont[\params]\Alg \to \params\times\carriers$ by the universal property of products. The first notable result is that the analogous functor $\langle p^T,V\rangle : \grothCont[\params]\EM \to \params\times\carriers$ for a parametrized monad $T \colon \params \to \Mnd{\carriers}$ is monadic (cf. \autoref{proposition:_limits}), characterizing $\grothCont[\params]\EM$ as an \textEM object in the slice 2-category $\Cat/\params$. Similarly, if $F$ is just a parametrized functor, $\langle p^T,V\rangle$ presents $\grothCont[\params]\Alg$ as an \emph{inserter} (cf. \autoref{remark_on_endofu}) in the same 2-category.

Thus, the idea that the fibration of algebras of a parametrized functor $F$ can be seen as a representation of $\params$ over $\carriers$ is backed up by the fact that $\grothCont[\params]\Alg$ has a universal property, analogous to the action groupoid of a group representation.


Viewing a parametrized monad $T : \params \too \Mnd{\carriers}$ as an action of $\params$ on $\carriers$, we argue that the category  $\grothCont[\params]\EM$ can also be thought of as a kind of a semidirect product $\algTotal[T]\params\carriers$ of $\carriers$ and $\params$.

One justification for this is that under mild assumptions on $\params,\carriers$ (namely: the existence of a terminal object in $\carriers$, of an initial object in $\params$, and 
that $T_\initial$ is the identity monad), ensure that $V$ in \eqref{fundamental_span} has a left adjoint $\ladj{V}$, and $p^T$ has a right adjoint $p^T_R$. This gives us a diagram
\[\label{quaglia}
	\vxy{
		\carriers \pair {\ladj{V}}{V} & \grothCont[\params]\EM \pair {p^T}{p^T_R} & \params
	}
\]
of left adjoints in which the composition $p^T\circ V_L$ `is zero' (=constant at the initial object of $\params$), which can be argued to be a quite well-behaved \emph{extension diagram}\footnote{Though this `extension perspective' of the present work will not be explored further here.}.



If furthermore $\carriers$ has an initial object, then $p^T \colon \grothCont[\params]\EM  \to \params$ admits a full and faithful left adjoint, which exhibits our extension as a kind of a \textit{split extension}. 
This motivates us to think of the fibration of algebras as a categorified semidirect product, leading us to adopt the notation
\[
	\textstyle
	\algTotal[T]\params\carriers \coloneqq \grothCont[\params]\EM 
	\quad \text{and} \quad
	\algTotal[F]\params\carriers \coloneqq \grothCont[\params]\Alg
\]
as syntactic sugar in both the monad and endofunctor algebra situations. We gather additional evidence of why this is a convenient and sound choice in \autoref{matters_of_fun}, and later in \autoref{ltimes_from_ltimes_mon}, but we start using this notation immediately.

A subsequent question we analyze is how many nice properties $\algTotal\params\carriers$ inherits, provided $\params,\carriers$ and all the functors $F_A$ are themselves well-behaved? Monadicity is quite useful in covering the case of existence of limits and colimits, namely: if limits exist in $\clA,\clX$ they exist in $\algTotal[T]\params\carriers$, and are created by $\langle p^T,V\rangle$. It similarly goes for $\algTotal[F]\params\carriers$ by $\langle p^F,V\rangle$, since all forgetful functors from inserter diagrams create limits. The existence of colimits is less automatic, but it can be reduced to the usual arguments about the existence of colimits in an \textEM category. 



While there is a lot more to say on this topic, in the current work we only sketch general theory and focus on two fundamental aspects.
\begin{enumerate}
	\item We outline the essential lines of the theory, strictly to the extent that this is needed to tackle a specific problem; most notably,
	      \begin{result}[\autoref{yoink_a_2_fibration}]\label{result_1}
		      The correspondence $(\params,\carriers,F)\mapsto \algTotal[F]\params\carriers$ is a functor, defined from a domain that in a specific sense is of the form $\Cat \ltimes^\ell \Cat$, which is a 2-dimensional lax instance of a fibration of algebras.
	      \end{result}
	      \begin{result}\label{result_2}
		      Let $T$ be a parametrized monad. The category $\algTotal[T]\params\carriers$ is monadic over the product $\params\times\carriers$; this yields as a corollary, in one fell swoop, all the following:
		      \begin{itemize}
			      \item that limits in $\algTotal[T]\params\carriers$ are created as limits in $\params\times\carriers$, see \autoref{proposition:_limits};
			      \item as a consequence of a well-known result by Linton, that colimits are computed from reflexive coequalizers in the base, see \autoref{prop:fib_for_colim};
			      \item that there is a simple criterion, based on the adjoint functor theorem, to turn $p^T$ into a bifibration, see \autoref{swindle_theorem}.
		      \end{itemize}
	      \end{result}
	\item We fix a fibration $p : \clE\fib\params$ over $\params$, and under really minimal assumptions, find a `best-approximating' fibration of algebras receiving a canonical morphism of fibrations from $p$. Thus,
	      \begin{result}[\autoref{theorem:_the_fibration-monad_adjunction}]
		      Minimal assumptions ensure that every fibration can be converted into a fibration of algebras.
	      \end{result}
\end{enumerate}
%
%
This second point constitutes the gist of the present paper; let us outline it more in detail: first of all, assume that $\params$ has an initial object. Then, given a fibration $p : \clE\fib\params$, we consider its fibre over the initial object, say $\clE_\varnothing$, and under the assumption that all its fibres have initial and terminal object, and some coproducts exist in its domain $\clE$ (we call such fibrations \emph{pruned}, the rationale being that the collection of stalks of the prestack associated to $p$ is a bit smaller and simpler to describe) we find a parametrized monad $T^p : \params\times\clE_\varnothing \too\clE_\varnothing$ associated to $p$, and we build a commutative triangle (actually, a morphism of fibrations) as below.
\[\vxy{
		\clE\ar[dr]_{p}\ar[rr]^{\eta_p} && \algTotal\params{\clE_\varnothing} \ar[dl]\\
		& \params &
	}\]
The monad $T^p$ has the property that $T^p_\varnothing$ is the identity functor. The construction we just sketched sets up a pair of adjoint 2-functors
\[\label{koala}
	\vxy{ \niceFib{\params} \pair{p \mapsto T^p}{p^T \mapsfrom T} & \ParMndInit{\params} \,,}
\]
(cf. \eqref{pruned_adjunction}) between the 2-categories of pruned fibrations over $\params$ and the 2-category of pruned parametrized monads, and the arrow $\eta^p$ is the unit of such adjunction.

A few remarks are in order to understand our presentation better. First of all, we focus on monads not only to anchor our discourse to a single thread (doing otherwise would unavoidably sacrifice clarity) but also because parametrized monads are more studied and already general enough to provide evidence that the theory we present is flexible and modular; monads are also more well-behaved: for bare endofunctors, the existence of free algebras, \ie of a left adjoint to the forgetful functor sending an $F_\param$-algebra to its carrier, does not come entirely for free.

If free algebras exist, a similar analysis could be carried out for parametrized functors alone, with some care; in \autoref{remark_on_endofu} we discuss how one of our main results, a monadicity theorem, can also be proved for generic parametrized endofunctors. It's a trade off between expressive power (an analogue of the theorem is true in more generality) and simplicity (the technology used is unavoidably more complex, viz we have to encode a universal property in a 2-categorical limit).

The importance of a `recognition principle' lies in the fact that presented with a generic fibration $p$, it might be hard to decide whether it is of the form $\algTotal[F]\params\carriers$; for example:
\begin{itemize}
  \item the \emph{simple fibration} described in \cite[1.3.3]{CLTT} is of use in \emph{simple type theory} (cf. \cite{Farmer2023}) and arises collating coKleisli categories in a fibration, $\int \mathrm{coKl} (A \times \firstblank)$: this was noted in \cite[Exercise 1.3.4.(ii)]{CLTT};
	\item it was noticed in \cite{Hedges2018} that \emph{lenses} arise instead from the fibrewise opposite\footnote{The fibrewise opposite fibration of $\var[p]{\grothCont F_p}\clB$ is the one associated to the pseudofunctor $\clB^\op\xto{F_p}\Cat\xto{\op}\Cat$.} fibration: $\int \mathrm{coKl} (A \times \firstblank)^\op$.
\end{itemize}
If the category of carriers and parameters coincide and admit finite limits:
\begin{itemize}
  \item the arrow category, corresponding to the codomain fibration, arises applying the Gro\-then\-dieck construction to the co\textEM algebras of the comonad $A\times\firstblank$ obtaining the category $\int \mathrm{coEM} (A \times \firstblank)$; this was noted in \cite[\textit{ibi}]{CLTT}.
	\item the coslice category, arises both as the fibrewise opposite fibration of co\textEM algebras $\int \mathrm{coEM} (X \times \firstblank)^\op$, and as the fibration of algebras for the constant polynomial endofunctor.
\end{itemize}
At least in the case of monads, we address this issue by giving a criterion in this respect: a (pruned) fibration $p$ is a fibration of \textEM algebras for a (pruned) parametrized monad if and only if the unit $\eta^p$ of the adjunction in \eqref{koala} is an equivalence of categories.

Compare the statement of \autoref{theorem:_the_fibration-monad_adjunction} with the following two results.
\begin{itemize}
	\item Let $p : E\fib B$ be a fibration between pointed spaces (so that $p$ has a `typical' fiber $F$); then, the fundamental group(oid) of the base $B$ acts in a canonical way on $F$.
	\item Let $A$ be an Abelian group, and $u : G\to A$ be an object of the slice category $\Grp/A$; then, $A$ acts on $\ker u$ in a canonical way.
\end{itemize}
This leaves open a few questions that we plan to expand on in subsequent work:
\begin{itemize}
	\item the work of \cite{gambino_kock_2013,Hedges2018}, expounded also in \cite[Ch. 4]{von_glehn_2015}, seems to suggest that when the fibration of algebras is a bifibration, the fibrewise opposite of the opfibration structure can also retain an explicit description and is of some interest even if it's not an (op)fibration of (co)algebras anymore. Is there a special way in which the duality involution of the 2-category $\Fib$ of fibrations acts on objects of the form $\algTotal\params\carriers$?
	\item The intuition
	of $\algTotal\params\carriers$ as a semidirect product would require us to develop an abstract theory of extensions of categories, in a 2-category of adjunctions; besides having intrinsic interest regardless of its immediate application to fibrations of algebras, such a theory has to be written from scratch, as it is not present in the literature on 2-categories. This will require some effort in a dedicated project, and together with the previous problem, it will require a more thorough study of the 2-dimensional properties of fibrations of algebras.
	\item Every fibration $\var[p]\clE\clB$ induces a factorisation system on its total category, from the vertical/cartesian decomposition of each arrow $E\to E'$, and among all factorisation systems torsion theories \cite{jans1965,CHK,janelidze2007characterization} are connected particularly tightly with the theory of fibrations, see \cite{RT}. Recent work of Gran, Kadjo and Vercruysse \cite{GKV:hopf_torsion} finds a torsion theory in the category of cocommutative Hopf algebras, which for us is a motivating example, cf. \autoref{cartier_roba}. It is likely that \cite{GKV:hopf_torsion} is a special case of a general lifting theorem of a factorisation system on $\clA,\clX$ to $\algTotal\params\carriers$.
\end{itemize}

\section{Fibrations of algebras}\label{sec_foa}
We assume the reader is familiar with the basic language of fibred categories, a standard reference for which is \cite{streicher2023fibred}. In particular, recall that the \emph{Grothendieck construction} \cite[Chapter 10]{Johnson2021pn} builds a correspondence between $\Cat$-valued pseudofunctors and (op)fibrations. This can be stated, for any category $\params$, as an equivalence of 2-categories, in either of the two forms
\begin{equation}\label{eq_grothCont}
	\Fib(\params) \simeq \Psd(\params^\op,\Cat)
	\qquad \text{ or } \qquad
	\opFib(\params) \simeq \Psd(\params,\Cat) \,,
\end{equation}
where $\mathsf{(op)}\Fib(\params)$ denotes the 2-category of (op)fibrations with codomain $\params$, as defined in \cite[$\S$2]{streicher2023fibred}. Such equivalence can be resticted to split (op)fibrations and functors.

The main object of study of the present paper is going to be a \emph{parametrized (endo)functor} and a particular way to obtain a fibration (or an opfibration) out of it.
\begin{definition}[Parametrized endofunctor, parametrized monad]\label{param_endofu}
Let $\clA,\clX$ be two categories; a \emph{parametrized endofunctor} (with \emph{category of parameters} $\params$ and \emph{category of carriers} $\carriers$) is a functor
\[\vxy{
F : \params\times\carriers \ar[r] & \carriers.
}\]
Clearly, every $F_A=F(A,-)$ is an endofunctor of $\clX$: as such, a parametrized endofunctor determines its mate $F':\clA \to \en\clX$. Whenever this doesn't give rise to ambiguity, we refer to both as `the parametrized endofunctor $F$'.

Similarly, a \emph{parametrized monad} is a parametrized endofunctor $T$ such that each $T_A=T(A,\firstblank) : \carriers\too\carriers$ is a monad (in its curried form, $T' : \params\to\Mnd\carriers$), and each $f : A\to B$ induces a monad morphism $T(A,\firstblank)\To T(B,\firstblank)$ in the sense of \autoref{def_mor_monads}.
\end{definition}
\begin{notation}
From now on, we adopt the implicit convention that parametrized endofunctors will be denoted with letters like $F,G,\dots$ while those that are parametrized monads will be denoted $S,T,\dots$; this will often allow to `type' a parametrized functor correctly and speed up the reading.
\end{notation}
\begin{definition}[Morphisms of monads]\label{def_mor_monads}
	A morphism in $\Mnd\carriers$ is a monoid homomorphism, when monads are regarded as monoids internal to $\en\carriers$; more explicitly, if $(T,\eta^T,\mu^T)$ and $(S,\eta^S, \mu^S)$ are monads on the same category $\carriers$, a morphism of monads $\alpha : S\To T$ consists of a natural transformation between the underlying functors, such that the equations
	\begin{itemize}
\item $\eta^T_X = \alpha_X \circ \eta^S_X$;
\item $\alpha_X \circ \mu^S_X = \mu^T_X \circ (\alpha\star \alpha)_X$,
	\end{itemize}
	witnessing compatibility with the units and multiplication of the two monads,	are satisfied. Here, $\alpha \star\alpha : TT\To SS$ is the horizontal composition of $\alpha$ with itself.
\end{definition}
Let us now comment on our main construction. We pick apart the case of monads: denote $\EM(T)$ the category of \textEM algebras of a monad $T$, then every parametrized monad $T: \params \to \Mnd{\carriers}$ gives rise to a functor
\[\vxy[@R=0mm]{
\params^\op \ar[r] & \Cat\\
\param \ar@{|->}[r] & \EM(T_\param).
	}
\]
Under the equivalence of \eqref{eq_grothCont}, to this functor is associated a fibration, described as follows. The total category (\ie the domain) of the fibration is the category where
\begin{itemize}
	\item
an \emph{object} is a triple $(\param,\carrier,\xi : T_\param\carrier\to\carrier)$, more concisely denoted $\alg AX\xi$,
where $\xi$ is an \textEM algebra of the monad $T_\param$ and
	\item
a \emph{morphism}
$\alg AX\xi\to \alg BY\theta$
is a pair $(f,g)$ of morphisms
$f : A \to B$ in $\params$
and
$g : X \to Y$ in $\carriers$,
such that the square
\begin{equation}\label{mor_of_EM}
\vxy{
T_A X \ar[d]_{\xi}\ar[r]^{T_f g} & T_{B} Y  \ar[d]^{\theta} \\
X \ar[r]_g & Y
}
\end{equation}
in $\carriers$ commutes. Composition and identities are pairwise. 
\end{itemize}
\begin{notation}
We denote the category so defined as $\algTotal[T]{\params}{\carriers}$.
\end{notation}

\begin{definition}[\textEM fibration]\label{em_foa}
	Let $T : \params\times\carriers\too\carriers$ be a parametrized monad. The \defn{\textEM fibration} of $T$ (or the \emph{fibration of \textEM algebras} of $T$) is the functor
	\[
p^T  : \algTotal[T]{\params}{\carriers} \too \params \,, \quad
\alg{\param}{\carrier}{\xi} \mapsto \param \,.
	\]
\end{definition}
As for the fibrational part of the structure, given a morphism $f: \param' \to \param$ in $\params$, the functor $f^\ast  : \EM(T_\param) \to \EM(T_{\param'})$ computing the reindexing along $f$ is given by the mapping
\[
	\alg AX\xi \mapsto \alg {A'}X{\xi\circ T_fX}
\]
where the map $\xi : T_AX\to X$ is `reindexed' to $T_{A'}\carrier \xrightarrow{T_f X} T_\param \carrier \xrightarrow{\xi} \carrier$.

Every natural transformation $T_f : T_{A'}\To T_A$ is in fact a morphism of monads in the sense of \autoref{def_mor_monads}, so $\alg {A'}X{\xi\circ T_fX}$ is really an \textEM algebra for $T_{A'}$:
\begin{itemize}
	\item the compatibility with the unit is witnessed by the commutativity of the diagram
\[\vxy{
& X \ar@{->}[ld]_{\eta^{A'}_X} \ar@{->}[d]^{\eta_X^A} \ar@{=}[rd] &  \\
T_{A'}X \ar@{->}[r]_{T_fX} & T_AX \ar@{->}[r]_{\xi} & X
}\]
	\item the compatibility with the multiplication is witnessed by the commutativity of
\[\vxy{
T_{A'}T_{A'}X \ar@{->}[r]^{T_{A'}T_fX} \ar@{->}[dd]_{\mu^{A'}_X} \ar@{->}[rd]_{(T_f\star T_f)_X} & T_{A'}T_AX \ar@{->}[r]^{T_{A'}\xi} \ar@{->}[d]^{T_fT_AX} & T_{A'}X \ar@{->}[d]^{T_fX} \\
& T_AT_AX \ar@{->}[r] \ar@{->}[d]^{\mu^A_X} & T_AX \ar@{->}[d]^{\xi} \\
T_{A'}X \ar@{->}[r]_{T_fX} & T_AX \ar@{->}[r]_{\xi} & X
}\]
\end{itemize}
\begin{remark}\label{more_precisely}
	The action of the reindexing functors $f^* : \EM(T_A)\too \EM(T_{A'})$ introduced in \autoref{em_foa} can be described as carrying an $A$-algebra $\xi : T_AX\to X$ to the $A'$-algebra $f^*(\carrier,\xi)=\xi\circ T_f \carrier: T_{\param'} \carrier \to \carrier$. The commutativity condition of \eqref{mor_of_EM} expresses the fact that $f : X\to X'$ is a morphism of $T_{A'}$-algebras \emph{into the reindexing}, $\alg{\param}{\carrier}{\xi} \to f^*(\alg{\param'}{\carrier'}{\xi'})$.
\end{remark}
\begin{remark}
	An explicit description of the fibration of algebras of a parametrized endofunctor (or of a parametrized pointed endofunctor) is the same as above, except for the appropriate relaxation of the condition of $\xi$ being just a (pointed) endofunctor algebra.
\end{remark}
\begin{definition}[co\textEM opfibration]
	Dually, one can construct the opfibration
	\[
p_S  : \coalgTotal[S]{\params}{\carriers} \too \params
	\]
	of a parametrized comonad $S: \params \to \coMnd{\carriers}$. This category has
	\begin{itemize}
\item objects triples $\coalg AX\xi=(\param,\carrier,\xi : \carrier\to S_\param \carrier)$ with $\xi$ a co\textEM coalgebra for $S_\param$,
\item morphisms $\coalg AX\xi\to \coalg BY\theta$ the pairs $(f,g)$ with $f : \param\to B$ in $\params$ and $g : \opFibReindex{f}(X,\xi)\to(Y,\theta)$ a $S_B$-coalgebra morphism.
	\end{itemize}
\end{definition}
\begin{notation}\label{implicit_endo_foa}
	We often write $\algTotal{\params}{\carriers}$ for the total categories, omitting the super- or subscript, if it is clear from the context which parametrized monad or comonad we are using. In the present note the same notation will also be used for (possibly pointed) endofunctor (co)algebras.\footnote{The only situation when this might cause confusion is when one considers the category of endofunctor algebras for the underlying endofunctor of a given monad $T$; we will never be in such a situation here.}
\end{notation}

Evidently, there is a lot of freedom in how to vary the definition of the fibration of algebras. One can use endofunctor algebras, pointed endofunctor algebras, \textEM, co\textEM, etc. for a given endofunctor admitting more and more structure. So, if one wants to taxonomize the situation, and avoid confusion, they have to adopt a more systematic approach to notation. Given that the present work is aimed at presenting the benefits and basic features of fibrations of algebras, we leave the task of systematisation to a separate note.


For the time being, it is convenient to record the following nomenclature.
\begin{definition}\label{univ_of_endo}
	The \emph{universal fibration $U_\Alg$ of endofunctor algebras} is the fibration associated to the pseudofunctor
	\[
\vxy{
\Alg : \en\carriers^\op\ar[r] & \Cat
}
	\]
	sending an endofunctor $F : \carriers\too\carriers$ to its category of endofunctor algebras and algebra homomorphisms.
\end{definition}
Actually, $\Alg$ is not only a pseudofunctor but a \emph{functor}, so that its associated fibration is split. As it will not make much of a difference in our exposition, we abstain from mentioning this at every turn, but the fibration-oriented reader should definitely keep this in mind.
\begin{definition}\label{univ_of_em}
	The \emph{universal fibration $U_\EM$ of \textEM algebras} is the fibration associated to the pseudofunctor
	\[
\vxy{
\EM : \Mnd{\carriers}^\op\ar[r] & \Cat
}
	\]
	sending a monad $T : \carriers\too\carriers$ to its category of \textEM algebras and algebra homomorphisms, and a morphism of monads to the reindexing functor $\alpha^* : \EM(T)\too\EM(S)$ that sends $(X,\xi)\mapsto (X, \xi\circ\alpha_X)$.
\end{definition}

Now, the following two observations follow unwinding the definition of a (strict) pullback in $\Cat$; alternatively, they can be seen as a straightforward particular instance of the Grothendieck construction.
\begin{remark}
	Let $F : \params \too\en\carriers$ be a parametrized functor; the fibration $\algTotal[F]{\params}{\carriers}$ of \autoref{implicit_endo_foa} arises pulling back $F$ along the universal fibration $U_\Alg$ of endofunctor algebras of \autoref{univ_of_endo}.
\[\vxy{
\algTotal[F]{\params}{\carriers} \ar[r]\ar[d]\xpb & \int\Alg\ar[d]^{U_\Alg}\\
\params \ar[r]_F & \en\carriers
}\]
\end{remark}
Observe that one can produce a pullback diagram presenting $\algTotal[F]{\params}{\carriers}$, meaning that all fibrations of algebras arise pulling back from a universal one. Such a fibration of algebras is obtained when $\clA=\en\clX$ is itself the category of endofunctors, acting with representation the identity functor $\clA\too\en\clX$. With that in mind, we will begin denoting the total category of the universal fibration of algebras (cf. \autoref{univ_of_endo}) as
\[
\int\Alg_\carriers = \algTotal{\en\carriers}{\carriers},
\]
and similarly for all its variants. In monoid theory (but, more frequently, in group theory), the semidirect product $\text{End}(M)\ltimes M$ (resp., $\text{Aut}(G)\ltimes G$) under the identity representation is called the \emph{holomorph} monoid (resp., group).
A step-by-step comparison of the case of monoids and that of categories is detailed in \autoref{subexam_algebras}, see in particular \eqref{pb_endo} and \eqref{pb_em} below.

Similarly,
\begin{remark}
	Let $T : \params\times\carriers\too\carriers$ be a parametrized functor; the fibration $\algTotal[T]{\params}{\carriers}$ of \autoref{em_foa} arises pulling back $F$ along the universal fibration of \textEM algebras of \autoref{univ_of_em}.

	Explicitly, there is a pullback diagram presenting $\algTotal[F]{\params}{\carriers}$, depicted in \eqref{pb_em} below.
\end{remark}
Considering \emph{free} algebras for a parametrized monad $T$ is possible but requires a bit of care:
\begin{definition}
	The \emph{universal opfibration of Kleisli categories} is the total category of the pseudofunctor $\Kl : \Mnd{\clX} \too \Cat$ sending a monad $(T, \eta, \mu)$ to its Kleisli category. $\Kl$ is defined on morphisms of monads $\alpha : T\To S$ as follows:
	\begin{itemize}
\item each $\opFibReindex{\alpha} : \Kl_\clX(T)\too\Kl_\clX(S)$ is the identity on objects functor
\item mapping a Kleisli morphism $f : X\to TY$ to $\alpha_Y\circ f : X\to TY\to SY$.
	\end{itemize}
	Functoriality of the assignment $\alpha\mapsto \opFibReindex{\alpha}$ follows from the fact that $\alpha$ is a monoid homomorphism.
\end{definition}
\begin{definition}\label{def_kl_opfib}
	Let $T : \params \too \Mnd{\carriers}$ be a parametrized monad. The \emph{opfibration of free algebras} on $T$ is
	the category having
	\begin{itemize}
\item as objects the pairs $(A,X)$ where $A\in\params$ is an object and $X\in\Kl(T_A)$;
\item as morphisms $(u,f):(A,X)\to (A',Y)$ the pairs $u : A\to A'$ and $f : \opFibReindex{u} X \to Y$ (a Kleisli morphism in $\Kl(T_{A'})$).
	\end{itemize}
	The functor $\opFibReindex{u} = T_{u,\ast} : \Kl(T_A)\too \Kl(T_{A'})$ acts as follows:
	\begin{itemize}
\item the identity on objects;
\item on morphisms a Kleisli map $f : X\to T_AY$ goes to $T_uY\circ f : X\to T_AY\to T_{A'}Y$.
	\end{itemize}
\end{definition}
Similar constructions define the universal fibration of free co\textEM coalgebras and the fibration of free coalgebras of a parametrized comonad: since one of our main examples, \autoref{fib_reg_rep}, is a fibration of cofree colgebras, we spell it out in full.
\begin{definition}\label{def_cokl_fib}
	Let $S : \params \too \coMnd{\carriers}$ be a parametrized comonad. The \emph{fibration of cofree coalgebras} modeled on $S$ is
	the category having
	\begin{itemize}
\item as objects the pairs $(A,X)$ where $A\in\params$ is an object and $X\in\coKl(S_A)$;
\item as morphisms $(u,f):(A,X)\to (A',Y)$ the pairs $u : A\to A'$ and $f : X \to  \opFibReindex{u}Y$ (a coKleisli morphism in $\coKl(T_A)$).
	\end{itemize}
	The functor $\opFibReindex{u} = S_{u,*} : \coKl(S_{A'})\too \coKl(S_A)$ acts as follows:
	\begin{itemize}
\item the identity on objects;
\item on morphisms a coKleisli map $f : S_{A'}X\to Y$ goes to $f\circ S_uX : S_AX\to S_{A'}X\to Y$.
	\end{itemize}
\end{definition}
\begin{remark}
It would be tempting now to assert that given a monad $T$ on $\carriers$ the comparison functors $K_T : \Kl_\carriers(T)\to\EM_\carriers(T)$ are the components of a transformation $\Kl_\carriers\To\EM_\carriers$ of sorts, and maybe even of a morphism of fibrations
\[
\vcenter{\xymatrix{
    \int\Kl_\carriers \ar[rr]\ar[dr] && \int\EM_\carriers \ar[dl] \\
    & \Mnd\carriers}}
\] between the Kleisli fibration and the Eilenberg-Moore fibration.

A moment of reflection is however enough to realize that the statement doesn't even typecheck, given the opposite variance of the functors. Even resorting to the notion of a (pseudo)\emph{dinatural} family of maps the diagram
\[\vcenter{\xymatrix{
    \Kl_\carriers(T)\ar[r]^K\ar[d]_{\alpha_*} & \EM_\carriers(T) \\
    \Kl_\carriers(S)\ar[r]_{K'} & \EM_\carriers(S) \ar[u]_{\alpha^*}
}}\]
is not commutative; the best one can do is prove that it is \emph{laxly} commutative, i.e. that there is a 2-cell $\kappa : K\To \alpha^* \circ K' \circ\alpha_*$ closing the square.\footnote{We omit the proof of this statement that essentially relies on the fact that $\alpha^*$ acts as the identity on carriers and just changes the algebra maps, and moreover it acts trivially on algebra morphisms as well; the components of $\kappa$ are determined as the components of $\alpha$, $\alpha_A : (TA,\mu_A^T)\to (SA,\mu_A^S\circ\alpha_{SA})$; the naturality of $\kappa$ at a Kleisli map $\varphi : A\to TB$ boils down to the commutativity of \[\notag\vcenter{\xymatrix{
    TA \ar[rrr]^{\mu^T_B\circ T\varphi}\ar[d]_{\alpha_A}&&& TB\ar[d]^{\alpha_B} \\
    SA \ar[r]_{S\varphi} & STB \ar[r]_{S\alpha_B} & SSB \ar[r]_{\mu^S_B} & SB
}}\] which can be broken down to the fact that $\alpha$ is a monad morphism.}
\end{remark}
\subsection{Matters of functoriality}\label{matters_of_fun}
This subsection aims to prove the following theorem and to draw some of its consequences.
\begin{theorem}\label{yoink_a_2_fibration}
	There exists a 2-category denoted $\Cat\ltimes^\ell\Cat$, so that sending $(\params,\carriers,F)$ to $\algTotal[F]{\params}{\carriers}$ is a 2-functor $\Cat\ltimes^\ell\Cat\too\Cat$.
\end{theorem}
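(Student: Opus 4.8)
The plan is to spell out the 2-category $\Cat\ltimes^\ell\Cat$ by hand and then to check, item by item, that the assignment $\Theta$ sending $(\params,\carriers,F)$ to $\algTotal[F]{\params}{\carriers}$ extends to a (strict) 2-functor $\Theta\colon\Cat\ltimes^\ell\Cat\too\Cat$. An object of $\Cat\ltimes^\ell\Cat$ is a triple $(\clA,\clX,F)$ given by a parametrized endofunctor $F\colon\clA\times\clX\to\clX$. A 1-cell $(\clA,\clX,F)\to(\clA',\clX',G)$ is a triple $(P,Q,\phi)$ in which $P\colon\clA\to\clA'$ and $Q\colon\clX\to\clX'$ are functors and
\[
\phi\colon G\circ(P\times Q)\To Q\circ F
\]
is a natural transformation between functors $\clA\times\clX\to\clX'$, with components $\phi_{A,X}\colon G(PA,QX)\to QF(A,X)$. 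The superscript $\ell$ records that $\phi$ is allowed to be non-invertible; its direction is the one forced by the intended behaviour, being exactly what turns an $F_A$-algebra $\xi\colon F_AX\to X$ into the $G_{PA}$-algebra $Q\xi\circ\phi_{A,X}\colon G_{PA}QX\to QX$ on $QX$ (the opposite direction would instead produce coalgebras). Composition of 1-cells is horizontal pasting: the composite of $(P,Q,\phi)$ with $(P',Q',\phi')\colon(\clA',\clX',G)\to(\clA'',\clX'',G')$ is $(P'P,\,Q'Q,\,\phi'')$ with $\phi''_{A,X}=Q'(\phi_{A,X})\circ\phi'_{PA,QX}$; the identity on $(\clA,\clX,F)$ is $(\id[\clA],\id[\clX],\id[F])$, and associativity and unitality are instances of the interchange law.

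A 2-cell $(P,Q,\phi)\To(P',Q',\phi')$ between parallel 1-cells is a pair $(\sigma,\tau)$ of natural transformations $\sigma\colon P\To P'$ and $\tau\colon Q\To Q'$ subject to the compatibility $\tau_{F(A,X)}\circ\phi_{A,X}=\phi'_{A,X}\circ G(\sigma_A,\tau_X)$ for all $A\in\clA$ and $X\in\clX$. Vertical composition of 2-cells is componentwise, whiskering by a 1-cell is the evident pasting, and checking that these operations preserve the compatibility, together with verifying the remaining 2-category axioms, is routine Godement calculus. It is worth noting that $\Cat\ltimes^\ell\Cat$ is itself a lax, 2-dimensional avatar of the fibration-of-algebras construction --- morally $\Cat$ acting on itself through the endofunctor-category assignment $\clX\mapsto\en\clX$ --- which is what the notation is meant to evoke; but making that picture precise costs more than the hands-on description, so I would work directly from the latter.

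On objects we set $\Theta(\clA,\clX,F)=\algTotal[F]{\clA}{\clX}$. A 1-cell $(P,Q,\phi)\colon(\clA,\clX,F)\to(\clA',\clX',G)$ is sent to the functor $\bar\Phi\colon\algTotal[F]{\clA}{\clX}\to\algTotal[G]{\clA'}{\clX'}$ which acts on objects by $\bar\Phi\,\alg AX\xi=\bigl(PA,\ QX,\ Q\xi\circ\phi_{A,X}\bigr)$ and on morphisms by $\bar\Phi(f,g)=(Pf,Qg)$. Since we work with bare endofunctor algebras, $Q\xi\circ\phi_{A,X}$ carries no algebra axioms to check --- this is precisely why the theorem is about $\Alg$ and not about \textEM. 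That $(Pf,Qg)$ is a morphism of $G$-algebras, for a morphism $(f,g)\colon\alg AX\xi\to\alg BY\theta$ of $\algTotal[F]{\clA}{\clX}$, amounts to the equation $Qg\circ Q\xi\circ\phi_{A,X}=Q\theta\circ\phi_{B,Y}\circ G(Pf,Qg)$, which follows by feeding the hypothesis $g\circ\xi=\theta\circ F(f,g)$ into the naturality square of $\phi$ at the morphism $(f,g)$ of $\clA\times\clX$; functoriality of $\bar\Phi$ is then immediate. A 2-cell $(\sigma,\tau)$ is sent to the natural transformation $\bar\Phi\To\bar\Phi'$ whose component at $\alg AX\xi$ is the pair $(\sigma_A,\tau_X)$: that this pair is a morphism of $G$-algebras follows from the 2-cell compatibility together with the naturality of $\tau$ at $\xi$, while its naturality in $\alg AX\xi$ follows immediately from the naturality of $\sigma$ and $\tau$.

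It remains to confirm 2-functoriality. Identity 1-cells are plainly sent to identity functors. Composition of 1-cells is preserved strictly: unwinding the formula for $\phi''$ one checks that both $\Theta\bigl((P',Q',\phi')\circ(P,Q,\phi)\bigr)$ and $\bar\Phi'\circ\bar\Phi$ carry $\alg AX\xi$ to $\bigl(P'PA,\,Q'QX,\,Q'Q\xi\circ Q'(\phi_{A,X})\circ\phi'_{PA,QX}\bigr)$ and agree on morphisms, so $\Theta$ is a strict 2-functor; preservation of vertical and horizontal composition of 2-cells is again componentwise. The only step that is not purely mechanical is the design of $\Cat\ltimes^\ell\Cat$ itself --- above all the variance of the comparison $\phi$ --- after which every remaining verification is a naturality chase, the lengthiest being the one showing that $\bar\Phi$ preserves morphisms, together with its 2-cell counterpart. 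I would also flag that the monad (\textEM) analogue of the statement goes through in the same way only after restricting the 1-cells to those $\phi$ that are moreover compatible with the monad structures --- a ``parametrized monad morphism'' condition --- a restriction which is unnecessary in the endofunctor case treated by the theorem.
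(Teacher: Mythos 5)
Your proposal is correct and follows essentially the same route as the paper: you construct $\Cat\ltimes^\ell\Cat$ with the same objects, the same oplax comparison cell $\phi\colon G\circ(P\times Q)\To Q\circ F$ (matching the paper's $\delta$), the same 2-cell compatibility square, and the same action $\alg AX\xi\mapsto(PA,QX,Q\xi\circ\phi_{A,X})$, with identical naturality chases for 1- and 2-cells. The extra remarks on strict preservation of 1-cell composition and on the \textEM restriction are consistent with what the paper records in its surrounding definitions and remarks.
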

We begin by building $\Cat\ltimes^\ell\Cat$.
\begin{definition}[The total category $\Cat\ltimes^\ell\Cat$]\label{Cat_ell_cat}
	Define the 2-category $\Cat\ltimes^\ell\Cat$ having
	\begin{itemize}
\item as objects the triples $(\params,\carriers,F)$ where $F : \params\times\carriers\too\carriers$ is a parametrized endofunctor;
\item 1-cells are \emph{oplax morphisms of algebras} \ie pairs $(U,V,\delta) : (\params, \clX,F)\to (\clB,\clY,G)$ where $U : \params\too\clB$, $V : \clX\too \clY$ are functors and $\delta$ is a 2-cell filling the diagram below;
\[\label{lax_algebra_mor}\vxy{
\clA\times\clX \ar[r]^-F \ar[d]_{U\times V}& \clX \ar[d]^U \\
\clB\times\clY \ar[r]_-G & \clY \ultwocell<\omit>{\delta}
}\]
\item 2-cells $(U,V,\delta)\To (U',V',\delta')$ are pairs $\omega : U\To U',\nu : V\To V'$ of natural transformations such that the following equality of pasting 2-cells holds.
\[\label{2cell_eqn}\vxy[@R=12mm@C=12mm]{
\clA\times\clX \ar[r]^-F \ar@/^1pc/[d]\ar@/_1pc/[d] & \clX \ar@{}[dr]|{=}\ar[d]^V & \clA\times\clX \ar[r]^-F \ar[d]& \clX \ar@/^.66pc/[d]\ar@/_.66pc/[d] \\
\clA\times\clY\utwocell<\omit>{\omega\times\nu} \ar[r]_-G & \clY \ultwocell<\omit>{\delta}& \clA\times\clY \ar[r]_-G & \clY\ultwocell<\omit>{\delta'}\utwocell<\omit>{\nu}
}\]
	\end{itemize}
\end{definition}
\begin{remark}[On the notation $\Cat\ltimes^\ell\Cat$]
	We choose to denote $\Cat\ltimes^\ell\Cat$ the category in \autoref{Cat_ell_cat}, since one can see the domain of the fibration of algebras construction as arising from a parametrized $2$-endofunctor
	\[\Cat \too \en \Cat \,, \quad \params \mapsto \params \times \firstblank\]
	of which we consider \emph{oplax} endofunctor algebras, in the same sense as \cite{blackwell1989two}. This hints at the presence of a 2-dimensional, laxified version of the theory of fibration of algebras, for parametrized endo-2-functors of 2-categories and bicategories, where we consider 2-fibrations of various kinds of algebras attached to $F$; a thorough study of the matter would lead us astray.
\end{remark}
Let's instead focus on specializing \autoref{Cat_ell_cat} to the case of parametrized monads (we will need this in the proof of \autoref{theorem:_the_fibration-monad_adjunction}) and on proving \autoref{yoink_a_2_fibration}.
\begin{remark}\label{same_but_4monads}
  One can similarly define a category $\Cat\ltimes^{\ell,\EM} \Cat$ where 
  \begin{itemize}
    \item objects are triples $(\params,\carriers,F)$ where $T \colon \params \too \Mnd{\carriers}$ is parametrized monad;
    \item 1-cells are triples, $(U,V,\delta)$ as in \autoref{Cat_ell_cat}, where in addition $\delta$ satisfies the equations 
      \begin{gather}
          \xymatrix@C=1.2cm{
          \clA\times\clX \ar[d]_{U\times V} \ar[r]^T & \clX \ar[d]^V\ar@{}[drr]|{\text{\large =}} && \clA\times\clX \ar[d]_{U\times V} \ar[r]^\pi \drtwocell<\omit>{={}}& \clX \ar[d]^V\\
          \clB\times\clY & \clY \ultwocell<\omit>{<1>\delta}\ltwocell~'{\dir{}}~`{\dir{<}}^{\pi}_{S}{\eta^S} && \clB\times\clY \ar[r]_\pi & \clY
          }\notag\\ 
          \footnotesize\xymatrix@C=4mm{
          \clA\times\clX \ar[r]\ar[d]_{U\times V} \ar@/^2pc/[rrr]^-T  & \clA\times\clA\times\clX \ar[r]\ar[d] & \clA\times\clX \ltwocell<\omit>{<2>\mu^T}\ar[r]\ar[d] & \clX\ar[d]^V \ar@{}[drr]|{\text{\large =}} && \clB\times\clY \ar[d]_{U\times V}\ar@/^2pc/[rrr]^-T & && \clY\ar[d]^V\\
          \clB\times\clY\ar[r] & \clB\times\clB\times\clY\ar[r]\ultwocell<\omit>{={}} & \clB\times\clY\ar[r] \ultwocell<\omit>{\hat\delta}& \clY \ultwocell<\omit>{\delta}&& \clB\times\clY\ar[r]\ar@/^2pc/[rrr] & \clB\times\clB\times\clY\ar[r] & \ltwocell<\omit>{<2>\mu^S}\clB\times\clY\ar[r] & \clY\ulltwocell<\omit>{<3>\delta}\\
          }\label{par_mnd_axioms}
      \end{gather}
      making it a morphism of monads;\footnote{The map $\hat\delta$ is defined as the whiskering $(U\times\clY) * (\clA\times\delta)$ and altogether the lower pasting diagram where it appears consists of the horizontal composition of $\delta$ with itself, regarded as a 2-cell between coKleisli maps: this is the basis on which we build intuition for \autoref{as_monads_in_cokl}, proving that in fact a parametrized monad $T$ as above is a monad in $\coKl(\clA\times\firstblank)$.}
    \item 2-cells are pairs $(\omega,\nu)$ as in \autoref{Cat_ell_cat}.\eqref{2cell_eqn}.
  \end{itemize}
\end{remark}
\begin{proof}[Proof of \ref{yoink_a_2_fibration}]
	\autoref{Cat_ell_cat} entails that given a 1-cell $(U,V,\delta) : (\clA,\clX,F)\to(\clB,\clY,G)$, the assignment
	\[
\vxy{
(X,\xi)^A \ar@{|->}[r]& (VX, V\xi\circ \delta_{AX})^{UA}
}
	\]
	is a functor $U\ltimes(V,\delta) : \algTotal[F]{\clA}{\clX}\too\algTotal[G]{\clB}{\clY}$. On morphisms of algebras $(f,g) : (X,\xi)^A\to(X',\xi')^{A'}$ this functor is defined using the action of $V$ on morphisms, and by the equation in \eqref{lax_algebra_mor} the diagram
	\[\vxy{
G_{UA}VX\ar[r]^-{G_{Uf}Vg}\ar[d]_{\delta_{AX}} & G_{UA'}VX' \ar[d]^{\delta_{A'X'}}\\
VF_AX \ar[d]_{V\xi}\ar[r]^-{VF_fg}& VF_{A'}X' \ar[d]^{V\xi'}\\
VX \ar[r]_{Vg} & VX'
}
	\] commutes, so that $U\ltimes(V,\delta)(f,g)=(Uf,Vg)$ defines a morphism $(VX, V\xi\circ \delta_{AX})^{UA}\to (VX', V\xi'\circ \delta_{A'X'})^{UA'}$ in the fibration of algebras $\algTotal[G]\clB\clY$.

	At the level of 2-cells, axiom \eqref{2cell_eqn} ensures that a pair $(\omega,\nu)$ defines an algebra map
	\[\vxy[@C=2cm]{
(VX,V\xi\circ\delta_{AX})^{UA}\ar[r]^-{(\omega_A,\nu_X)} & (V'X, V'\xi\circ\delta_{AX}')^{U'A};
}\]
	in fact \eqref{2cell_eqn} is exactly the commutativity condition of the square below.
	\[\label{vnaodnoa}\vxy[@C=2cm]{
G_{UA}VX \ar[r]^-{G(\omega_A,\nu_X)}\ar[d]_{\delta_{AX}}& G_{U'A}V'X\ar[d]^{\delta'_{AX}}\\
VF_AX \ar[r]_-{\nu_{F_AX}}& V'F_AX
}\]
	Now, the family of maps $(\omega_A,\nu_X)$ is natural because the square of algebra morphisms in $\algTotal[G]\clB\clY$
	\[\vxy{
(VX,V\xi\circ\delta_{AX})^{UA} \ar[r]^-{(Uf,Vg)}\ar[d]_{(\omega_A,\nu_X)} & (VX',V\xi'\circ \delta'_{AX})^{UA'}\ar[d]^{(\omega_{A'},\nu_{X'})}\\
(V'X,V'\xi\circ\delta_{AX})^{U'A} \ar[r]_-{(U'f,V'g)} & (V'X',V'\xi'\circ\delta'_{A'X'})^{U'A'}
}\]
	commutes thanks to the naturality of $\nu$ and to \eqref{vnaodnoa}, together with the rule defining the composition law in $\algTotal[G]\clB\clY$.
\end{proof}
\subsection{A whirlwind tour of examples of fibrations of algebras}\label{whirl}
The scope of this section is to collect examples of diverse origin. 

Among the simplest instances of parametrized monads and comonads there are the ones arising from a Cartesian or coCartesian category of parameters acting on itself by co/product. This quite specific setting is already enough to provide interesting examples. Subsequently we show how more general monoidal categories offer equally compelling examples, and we start outlining the universal property enjoyed by the construction in \autoref{subexam_algebras}.

More examples will follow when we will have established additional bits of the theory (e.g. how colimits are computed in total categories of the form $\algTotal\clA\clX$), see \autoref{subsec:colims}, and the 2-dimensional universal property thereof, \autoref{as_monads_in_cokl}.
\begin{example}[Simple fibration]\label{fib_reg_rep}
	In this example the categories of parameters and carriers coincide, \ie $\params=\carriers$; moreover, $\carriers$ is a Cartesian category.

	If we consider the (transpose of the) Cartesian product functor $P:\carriers \too \en \carriers$, sending $\param$ to the endofunctor $P\param = \param \times \firstblank$, then each functor $P\param$ carries the structure of a comonad, sometimes called the \emph{coreader comonad}, which collectively make $P$ into a parametrized comonad. The coKleisli fibration $\var{\bfs(\carriers)}{\carriers}$ of $P$ goes under the name of \emph{simple fibration} \cite[Definition 1.3.1]{CLTT}. Meanwhile, the opfibration of co\textEM coalgebras of $P$ is the codomain opfibration $\var[\text{cod}]{\carriers^\rightarrow}{\carriers}$.

	Dually, the parametrized monad $\param \mapsto \param + \firstblank$, given by the coproduct and known as the \emph{exception monad}~\cite{MOGGI199155}, will respectively yield the \emph{simple opfibration} $\var{\bfs\bfo(\carriers)}{\carriers}$ of \cite[p. 511]{CLTT} and the domain fibration as the Kleisli and \textEM fibrations.
\end{example}
The typical fibre $\bfs(\carriers)_\param$ of the simple fibration is a category that we denote $\carriers\slice \param$; it has the same objects as $\clX$, and as hom-sets $\bfs(\carriers)_\param(X,Y)\coloneq \carriers(\param \times X,Y)$.
It is called the \emph{simple slice} category over $\param$, equipped with the structure coming from coKleisli composition.
\begin{remark}
	The simple fibration is important in the semantics of programming languages, predominantly in the modelling of contexts of free variables (in the elimination rules of various type formers, such as inductive types). It is also important for characterising strong monads, that are used to model computational effects---in fact, a strong monad is a fibred monad on the simple fibration~\cite{MOGGI199155}.
\end{remark}
\begin{example}[Semiautomata]\label{example:_semiautomata}
	For every object $A$ of a monoidal category $(\params,\otimes)$, we can consider the endofunctor $\param \otimes \firstblank$. The {endofunctor algebras} for this are morphisms $d : A\otimes X\to X$, which are known as \emph{(monoidal) semiautomata} or \emph{Medvedev automata} (cf. \cite{Kilp2000,Ehrig}). Collectively, these will form the \emph{fibration of semiautomata}.
\end{example}
\begin{remark}
	In the case of a Cartesian monoidal category $(\params,\times)$, note that this is quite distinct from the coalgebras for the canonical comonad structure that functors $A \times \firstblank$ can be equipped with. As simple as it is, this constitutes an instance of the additional freedom allowed by a unified approach to the problem of studying fibrations arising from parametrized endofunctors: given the regular Cartesian representation $PA=A\times\firstblank$ of $\carriers$ on itself, one can consider
	\begin{itemize}
\item cofree co\textEM $P$-coalgebras, or alternatively
\item all co\textEM $P$-coalgebras, thus falling into the two cases of \autoref{fib_reg_rep},
\item endofunctor $P$-algebras, thus falling into \autoref{example:_semiautomata}.
	\end{itemize}
	Something similar happens anytime the same parametrized endofunctor carries more than one structure, and this might pose the problem of a slight abuse of notation or ambiguity: context will always solve any potential issues.
\end{remark}
The following examples can be seen as enriching the structure of a Medvedev automaton with an \emph{output} morphism $s : A\otimes X\to B$ yielding the result of the computation performed by $d : A\otimes X\to X$. A reference for the coalgebraic description of such a task can be found in \cite[Ex. 2.3.2]{Jacobs2016}, while the (straightforward) description using pullbacks comes from \cite[3.5]{noi:completeness}.
\begin{example}[Categories of automata]\label{example_mealy}
	Let $\clK$ be a monoidal closed category. For every pair of objects $A,B\in\clK$ one defines the category $\Mly(A,B)$ of \emph{Mealy automata} with input $A$ and output $B$ as the strict 2-pullback of forgetful functors
	\[\vxy{
\Mly(A,B) \ar[r]\ar[d]\xpb & \clK/[A,B]\ar[d]\\
\coAlg([A,\firstblank]) \ar[r] & \clK
}\]
	where $\coAlg([A,\firstblank])$ is the category of coalgebras for the functor $X\mapsto[A,X]$ and $\clK/[A,B]$ the slice category over $[A,B]$. As such, $\Mly(A,B)$ is the category of $R_{AB}$-coalgebras, where $R_{AB}X:=[A,X\times B]$; from this, \ie from the parametric endofunctor $R : (\clK^\op\times\clK)\times \clK\too\clK : (A,B,X)\mapsto [A,X\times B]$, we obtain the \emph{total Mealy opfibration} (of endofunctor coalgebras) $\Mly$ over $\clK^\op\times\clK$.

	Similarly, the category $\Mre(A,B)$ of \emph{Moore automata} with input $A$ and output $B$ is the strict 2-pullback of forgetful functors
	\[\vxy{
\Mly(A,B) \ar[r]\ar[d]\xpb & \clK/B\ar[d]\\
\coAlg([A,\firstblank]) \ar[r] & \clK
}\]
	from which we obtain the \emph{total Moore fibration} (of endofunctor coalgebras) $\Mre$ over ${\clK^\op\times\clK}$.
\end{example}
\begin{example}[Modules] \label{example_2}
	To get a monad version of \autoref{example:_semiautomata}, note that every monoid structure on $A$ equips the endofunctor $A\otimes \firstblank$ with the structure of a monad, giving us a parametrized monad
	(sometimes called the \emph{writer monad}~\cite{MOGGI199155})
	\[
\Mon(\params,\otimes) \too \Mnd{\params}\,, \quad A \mapsto A \otimes \firstblank \,.
	\]
	The resulting \textEM fibration is called the \emph{fibration of modules} \cite{Quillone1970,Frankland}.
\end{example}

For the monoidal category $(\Ab,\otimes)$, Example~\ref{example_2} is the \textEM fibration of the parametrized monad
\[
	\Ring = \Mon(\Ab,\otimes) \too \Mnd{\Ab}\,, \quad A \mapsto A \otimes \firstblank \,,
\]
which is a fibration known to Quillen (cf. \cite{Weibel1994} for a discussion of its structure, mostly its co/completeness, and \cite{Fran1} for some applications to Quillen cohomology).

We now want to make the intuition that these parametrized monads behave {\it as} group actions a bit more precise. In particular, recall that for a group $G$ one can form the semidirect product $\text{Aut}(G)\ltimes G$ when $\text{Aut}(G)$ acts on $G$ by evaluating an automorphism $f$ on an element $g$: this is called the \emph{holomorph} of $G$. The same construction carries over to monoids, as $\End(M)$ similarly acts on $M$ via the evaluation map. We can recapture this example, and generalise it to categories, via a suitable (universal) fibration of algebras.

\begin{example}[Classifying fibrations of algebras, again]\label{subexam_algebras}
	When $\en\carriers$ is the monoidal category of endofunctors of a fixed category $\carriers$, the parametrized endofunctor
	\[
\id  : \en\carriers \too \en\carriers \,, \quad F \mapsto F(\firstblank)
	\]
	we recover the category of \autoref{univ_of_endo}. As we have been hinting at, this provides a sort of universal paradigm for our construction, in the form of an analogue of the notion of {holomorph} in group theory. The results of \autoref{univ_of_endo}, \autoref{univ_of_em} can be rephrased as follows:	for any other parametrized endofunctor $F: \params \too \en \carriers$, its fibration of algebras $p^F$ can be constructed by taking the following pullback in $\Cat$.
	\[\label{pb_endo}
\vxy{
{\algTotal[F]{\clA}{\carriers}}\ar[r]\ar[d]_{p^F}\xpb & \algTotal{\en \carriers}{\carriers}  \ar[d]^U\\
\clA \ar[r]_F & \en\carriers
}
	\]
\end{example}
Reasoning similarly, we get that every \textEM fibration $p^T$ can be constructed by pulling back the classifying \textEM fibration along the parametrized monad
\[\label{pb_em}
	\vxy{
{\algTotal[T]{\clA}{\carriers}}\ar[r]\ar[d]_{p^T}\xpb & \algTotal{\Mnd{\carriers}}{\carriers}  \ar[d]^U\\
\clA \ar[r]_T & \Mnd{\carriers}
	}
\]
where the functor $U$ is obtained applying the Grothendieck construction to the pseudofunctor of \autoref{univ_of_em}.

\begin{example}[The fibration of cocommutative Hopf algebras]\label{cartier_roba}
	Consider an al\-ge\-bra\-i\-cal\-ly closed field $k$ of characteristic zero; an important theorem of Cartier, Gabriel and Kostant \cite{Kostant1977}, \cite[5.10.2]{EGNO15} asserts that every cocommutative Hopf algebra $H$ over $k$ arises from a semidirect product of a group $G$ (or rather, its group algebra $k[G]$) acting on a Lie algebra $L$ over $k$. More formally, the correspondence $G\mapsto k[G]\emdash\Lie$, sending a group to the category of Lie algebras with an action of $k[G]$, is a (contravariant) functor, defining, in a similar fashion as \autoref{example_2} above, a fibred category collecting the \textEM algebras of the monads $k[G]\otimes\firstblank$.
	If we denote the total category of such fibration as $\emalg{\Grp}{\Lie}$ and write the fibration associated to $G\mapsto k[G]\emdash\Lie$ as a `first projection' map
	\[\label{cchopf}
\vxy{
\emalg{\Grp}{\Lie} \ar[r]^-p & \Grp
}
	\]
	we can reformulate the CGK theorem as a fibred equivalence of categories: call $(\firstblank)^\text{g} : \CCHopf\too\Grp$ the functor sending a cocommutative Hopf algebra $H$ to its group $H^\text{g}$ of group-like elements, then CGK states that $(\firstblank)^\text{g}\cong p$ in the category $\Fib(\Grp)$ of fibrations over $\Grp$.
\end{example}

\begin{example}[Polynomials in the sense of \protect{\cite{gambino_kock_2013}}]\label{gambo_kock_pol}
	Given a locally Cartesian closed ca\-te\-go\-ry $\clE$, we can define a category $\Poly{I}$ of polynomials having as objects the diagrams $\fkf : I \xot s B \xto f A\xto t I$ in $\clE$, and as morphisms $(v,u):\fkf=(A,B;s,f,t) \to (A',B'; s',f',t')=\fkf'$ the pairs $u : A\to A'$, $v : B\to B'$ such that
	\begin{itemize}
		\item $s'\circ u=s$;
		\item $v\circ f = f'\circ u$ and the resulting square is a pullback;
		\item $t'\circ v=t$.
	\end{itemize}
	To each object $\fkf : I \xot{s} B \xto{f} A \xto{t} I$ of $\Poly{I}$ one can associate a polynomial endofunctor $P_\fkf$ on $\clE/I$ defined using the parametrized adjunction $\Sigma_g \dashv \Delta_g \dashv \Pi_g$ (with parameter a morphism $g : U\to V$) as the composite $P_\fkf :\clE/I \xto{\Delta_s} \clE/B \xto{\Pi_f} \clE/A \xto{\Sigma_t} \clE/I$
	to the effect that a morphism $(v,u):\fkf\to\fkf'$ induces a natural transformation $P_\fkf \To P_{\fkf'}$. We obtain a fibration having typical fibre $\Alg_\clE(P_\fkf)$.
	%
\end{example}

\section{A characterization, and on semidirect products}
In this section we focus on better understanding inherent features of the fibration of algebras construction: first we provide a new characterization by means of a certain \textEM object (cf. \autoref{subsec:parametricity}), and then we use it to further explore how our construction is related to the semidirect product intuition, and to express its relation to \emph{graded monads} (cf. \autoref{subsec:act}).

\subsection{Parametricity via the product}\label{subsec:parametricity}
While it is natural to define fibrations of algebras through the application of the Grothendieck construction to a pseudofunctor, at times one might need alternative approaches; some are quite more elegant and allow for equivalent characterizations of fibrations of algebras.

Every parametrized functor ${F  : \params \too \en{\carriers}}$ induces an endofunctor
\[
	\hat{F}  \coloneqq (\pi_\params,F)  : \params \times \carriers \too \params \times \carriers \,, \quad (\param,\carrier) \mapsto (\param,F_\param\carrier)
\]
and the endofunctors $\hat{F}$ that are of this form are precisely the ones fibered over the projection in the sense of making the triangle
\[\vxy{
	\params \times \carriers
	\ar[rr]^{\hat{F}}
	\ar[dr]_{\pi_\params}
	& & \params \times \carriers
	\ar[dl]^{\pi_\params} \\
	& \params &
	}\]
commute.\footnote{A technical point: for practical reasons, we only ask the triangles in the slice-2-category to commute up to isomorphism.} As trivial as this observation might seem, it hides an elegant characterization: $\pi_\params$ is the trivial bundle over $\params$, \ie a specifically simple object
of the slice $2$-category $\Cat/\params$.

From this point of view, parametrized monads $T  : \params \too \Mnd{\carriers}$ correspond to monads
in the slice-2-category $\Cat/\params$, on the projection functor $\pi_\params  : \params \times \carriers \to \params$ (an \emph{object} of $\Cat/\params$). Thus, fibrations of algebras
form but a subclass of the formal theory of monads in the sense of \cite{Street1972}. This enables us to make immediate use of the theory and results that come for free when one studies 2-categories of monads, and their algebras.
\begin{theorem} \label{theorem:_product_presentation}
	The \textEM fibration $p^T  : \algTotal{\params}{\carriers} \fib \params$ of a parametrized monad $T  : \params \too \Mnd{\carriers}$ is equivalent, in the slice-2-category $\Cat/\params$, to
	\[
		\EM(\hat{T}) \xrightarrow{U^{\hat{T}}}\params \times \carriers \xrightarrow{\pi_\params} \params \,,
	\]
	where $U^{\hat{T}}$ is the forgetful functor from the category of  \textEM algebras of the monad
	\[
		\hat{T}  : \params \times \carriers  \too \params \times \carriers \,, \quad (\param,\carrier) \mapsto (\param, T_\param\carrier) .
	\]
	The functor $\var[p^T]{\algTotal{\params}{\carriers}}{\params}$ is also the (internal) \textEM object of the monad $\hat{T}$ in the slice-2-category $\Cat/\params$, on the projection $\var[\pi_\params]{{\params \times \carriers}}{\params}$.
\end{theorem}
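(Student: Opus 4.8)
The plan is to prove the two halves of the statement separately and in order: first that $\EM(\hat T)$, equipped with $\pi_\params\circ U^{\hat T}$, is literally the same object of the slice-$2$-category $\Cat/\params$ as $p^T$ (in fact isomorphic to it, not merely equivalent); and then that this object carries the universal property of the \textEM object of the monad $\hat T$ inside $\Cat/\params$.

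\textbf{Step 1: the two categories agree over $\params$.} First I would unwind the definition of an \textEM algebra of $\hat T$. Such a structure on $(\param,\carrier)\in\params\times\carriers$ is an arrow $\hat T(\param,\carrier)=(\param,T_\param\carrier)\to(\param,\carrier)$, i.e.\ a pair $(f,g)$ with $f\colon\param\to\param$ in $\params$ and $g\colon T_\param\carrier\to\carrier$ in $\carriers$. Since the unit of $\hat T$ at $(\param,\carrier)$ is $(\id[\param],\eta^{T_\param}_\carrier)$, the \textEM unit law forces $f=\id[\param]$; and since $\mu^{\hat T}_{(\param,\carrier)}=(\id[\param],\mu^{T_\param}_\carrier)$ and $\hat T(\id[\param],g)=(\id[\param],T_\param g)$, the \textEM associativity law for $\hat T$ becomes verbatim the associativity law for $g$ as an \textEM $T_\param$-algebra. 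So an $\hat T$-algebra is exactly a triple $\alg\param\carrier\xi$ of the kind appearing in the definition of $\algTotal[T]{\params}{\carriers}$. On morphisms, an $\hat T$-algebra homomorphism $\alg\param\carrier\xi\to\alg{B}{\carrier'}\theta$ is a pair $(f,g)$ in $\params\times\carriers$ whose homomorphism square, once the forced identity $\params$-components of the structure maps are cancelled, is precisely the square \eqref{mor_of_EM}. This yields an isomorphism of categories $\EM(\hat T)\cong\algTotal[T]{\params}{\carriers}$ commuting strictly with the two projections to $\params$; a fortiori it is an equivalence --- even an isomorphism --- in $\Cat/\params$, which is the first assertion.

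\textbf{Step 2: the universal property.} Next I would invoke the defining property of \textEM objects in a $2$-category $\mathcal K$: the object $b^t$ is characterised by a $2$-natural equivalence $\mathcal K(c,b^t)\simeq\EM\!\big(\mathcal K(c,t)\big)$ between $1$-cells $c\to b^t$ and algebras for the monad $\mathcal K(c,t)$ induced on $\mathcal K(c,b)$. Here $\mathcal K=\Cat/\params$, $b=\pi_\params$, and $t=\hat T$ viewed as a monad \emph{in} $\Cat/\params$ --- the same datum as the parametrized monad $T$, by the discussion preceding the theorem. Since $\Cat$ itself has (strict) \textEM objects, given by the ordinary \textEM categories, it suffices to check that for every object $q\colon D\to\params$ of $\Cat/\params$ the hom-category $(\Cat/\params)\big(q,\pi_\params\circ U^{\hat T}\big)$ of functors $D\to\EM(\hat T)$ over $q$ is equivalent, $2$-naturally in $q$, to $\EM\!\big((\Cat/\params)(q,\hat T)\big)$. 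An algebra for $(\Cat/\params)(q,\hat T)$ is a functor $h\colon D\to\params\times\carriers$ with $\pi_\params h=q$ together with a $2$-cell $\alpha\colon\hat T h\To h$ lying over $\id[q]$ and satisfying the algebra axioms; but applying the unit-law computation of Step 1 componentwise shows that the $\params$-component of $\alpha$ is automatically the identity, so the clause ``over $\id[q]$'' is vacuous. Hence such data amounts to an algebra for $\Cat(D,\hat T)$ whose underlying functor lies over $q$, which by the universal property of $\EM(\hat T)$ in $\Cat$ is exactly a functor $D\to\EM(\hat T)$ over $q$. The matching statement for morphisms is obtained the same way, the only point being that ``over $\id[q]$'' is genuinely imposed on both sides (it is not automatic for a general $\hat T$-algebra homomorphism, whose $\params$-component may be non-trivial). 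This identifies $\pi_\params\circ U^{\hat T}$ as the \textEM object of $\hat T$ in $\Cat/\params$; transporting along the isomorphism of Step 1 gives the same for $p^T$.

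\textbf{Anticipated difficulties.} I expect no single computation to be hard; the care is in the bookkeeping. The main thing to pin down is the flavour of the slice: the footnote in the text only asks the relevant triangles to commute up to isomorphism, so one should either run everything strictly, as above --- every comparison constructed here is an equality or a functor over $\params$ on the nose --- and then note that a strict witness is a fortiori a pseudo-slice witness, or else carry coherence isomorphisms through Step 2. The second delicate point is to confirm that $t\mapsto(\Cat/\params)(c,t)$ really produces a monad \emph{in the slice}, i.e.\ that its unit and multiplication are $2$-cells over $\id$; this is exactly where one uses that $\hat T$ is a monad in $\Cat/\params$ and not merely a monad on $\params\times\carriers$ that happens to commute with $\pi_\params$. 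Checking $2$-naturality in $q$ and the compatibility of everything with the isomorphism of Step 1 is then routine.
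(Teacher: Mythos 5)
Your proposal is correct and follows essentially the same route as the paper: the identification of $\hat T$-algebras with objects of $\algTotal[T]{\params}{\carriers}$ (via the unit law forcing the $\params$-component of the structure map to be the identity) is exactly the paper's first step, and your representable verification that $(\Cat/\params)(q,\pi_\params\circ U^{\hat T})\simeq\EM\big((\Cat/\params)(q,\hat T)\big)$ is precisely the paper's bijection between $1$-cells $H:g\to p^T$ and $\hat T$-modules $(M,\lambda)$, just phrased in Street's hom-category language. No gaps.
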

\begin{proof}
	Clearly algebras for $\EM(\hat{T})$ are of the kind $(\id[\param],\xi) : (\param,T_\param \carrier)\to(\param,\carrier)$ with $\xi$ a $T_\param$ algebra, and similarly for their morphisms. The obvious correspondence
	\[
		(\param,\carrier,\xi : T_\param \carrier\to\carrier)\leftrightarrows(\id[\param],\xi : T_\param \carrier\to\carrier)
	\]
	makes an equivalence in $\Cat/\params$.
	As for the second part, we show its universal property. First of all consider  in $\Cat/\params$ the following 2-cell $\beta : \hat{T} U^{\hat{T}} \Rightarrow U^{\hat{T}}$,
	\[\vxy{
		\algTotal{\params}{\carriers}
		\ar[dr]_{{p^T}}
		\ar[r]^{U^{\hat{T}}}
		\ar@/^2pc/[rr]^{U^{\hat{T}}}
		&
		\params \times \carriers
		\ar[r]^{\hat{T}}
		\ar[d]_{\pi_\params}
		&
		\params \times \carriers\lltwocell<\omit>{<3>\beta}
		\ar[dl]^{\pi_\params} \\
		& \params
		}\]
	then we only need to show that it, together with $U^{\hat{T}}$, for any given $g : \clG\to\params$ induces a natural isomorphism between 1-cells in $H : g \to p ^T$ and $\hat{T}$-modules $(M : g\to\pi_\params,\lambda : \hat{T}\circ M\Rightarrow M)$. On one hand, each $H$ induces the module $(U^{\hat{T}}H, H\ast\beta)$ by composition and whiskering. On the other, a given module $(M,\lambda)$ for each $G$ and $MG=(\param_{G},\carrier_{G})$ defines a morphism $\lambda_G=(\id,\lambda_G^\carriers) : (\param_{G},T_{\param_{G}}\carrier_{G})\mapsto(\param_{G},\carrier_{G})$; then we can define a functor $H : G\mapsto (\param_{G},\carrier_{G},\lambda_G^\carriers)$. One can check that each of the two constructions provides a 1-cell in $\Cat/\params$, and they are one inverse to the other by definition of $\beta$. Everything is natural in $g$.
\end{proof}

\begin{remark}\label{remark_on_endofu}
	Analogously to \autoref{theorem:_product_presentation}, parametrized endofunctors and parametrized pointed endofunctors can be characterized internally in $\Cat/\params$. For  parametrized endofunctors $F  : \params \too \en\carriers$ with no extra structure, there is an evident way of constructing the total category of the fibration of endofunctor algebras as the category of endofunctor algebras of the endofunctor
	\[\vxy[@R=0cm]{\hat F :\clA\times\clX \ar[r]& \clA\times\clX\\
			(A,X) \ar@{|->}[r] & (A,F_AX)}\]
	as an endo-1-cell of $(\params\times\carriers,\pi_\params)$ in the $2$-category $\Cat/\params$, and the $2$-categorical characterisation of $\algTotal[F]{\params}{\carriers}$ is that of an \emph{inserter} \cite[(4.1)]{kelly_1989} of the identity and $\hat{F}$ in $\Cat/\params$.
\end{remark}
\autoref{theorem:_product_presentation} paves the way to an additional characterisation theorem for parametrized monads (that easily dualizes to comonads and adapts to more general parametrized endofunctors).
\begin{theorem}\label{as_monads_in_cokl}
The following pieces of data are equivalent:
\begin{enumtag}{pm}
\item \label{pm_1} a parametrized monad $T : \clA\times\clX \too \clX$, \ie (upon currying) a functor $\clA\too\Mnd{\clX}$;
\item \label{pm_2} a monad $T : (\clA\times\clX, \pi_\clA) \too (\clA\times\clX, \pi_\clA)$ in the 2-category $\Cat/\clA$, over the object $(\clA\times\clX,\pi_\clA)$;
\item \label{pm_3} a monad $T : \clA\times\clX \ptoo \clX$ in the 2-coKleisli category of strict 2-algebras, for the 2-comonad $\clA\times\firstblank$, over the object $\clX$.
\end{enumtag}
\end{theorem}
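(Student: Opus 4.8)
The plan is to prove the equivalence \ref{pm_1} $\Leftrightarrow$ \ref{pm_2} $\Leftrightarrow$ \ref{pm_3} by recognizing that all three notions unwind to exactly the same diagrammatic data, and that the passage between the three ambient 2-categories is already essentially established in the excerpt. The equivalence \ref{pm_1} $\Leftrightarrow$ \ref{pm_2} is almost immediate from \autoref{theorem:_product_presentation} and the discussion preceding it: there we already observed that parametrized endofunctors $F : \params \times \carriers \to \carriers$ correspond to endo-1-cells $\hat F$ of the object $(\params \times \carriers, \pi_\params)$ in $\Cat/\params$, with $\hat F = (\pi_\params, F)$. So the first step is to upgrade this correspondence to monad structure: I would check that a monad multiplication $\mu : T_A T_A \To T_A$ and unit $\eta : \id \To T_A$, natural in $A$ in the sense of \autoref{def_mor_monads} (\ie each $T_f$ is a monad morphism), is the same thing as 2-cells $\hat\mu : \hat T \hat T \To \hat T$ and $\hat\eta : \id \To \hat T$ in $\Cat/\params$ satisfying the monad laws. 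The only thing to verify is that such 2-cells in $\Cat/\params$ are automatically "identity in the first coordinate" — but this is forced, since $\pi_\params \hat T = \pi_\params$ and a 2-cell in the slice must be compatible with the projections. The monad axioms for $\hat T$ then transcribe coordinatewise into the monad axioms for each $T_A$ together with the monad-morphism condition on the $T_f$; I would display this transcription briefly and cite the pasting diagrams \eqref{par_mnd_axioms} from \autoref{same_but_4monads} as the bookkeeping.

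The equivalence \ref{pm_2} $\Leftrightarrow$ \ref{pm_3} is a change of ambient 2-category that I would handle via a general principle: for any (strict) 2-comonad $Q$ on a 2-category $\clK$ — here $\clK = \Cat$ and $Q = \clA \times \firstblank$ with its evident comonad structure — the 2-coKleisli 2-category $\mathrm{coKl}(Q)$ of strict $Q$-algebras has the same objects as $\clK$, and a monad on an object $\carriers$ in $\mathrm{coKl}(Q)$ is by definition a monoid in the hom-category $\mathrm{coKl}(Q)(\carriers,\carriers) = \clK(Q\carriers, \carriers) = \Cat(\clA \times \carriers, \carriers)$, with composition given by coKleisli composition. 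I would spell out that coKleisli composition of $T : \clA \times \carriers \to \carriers$ with itself is precisely the composite $\clA \times \carriers \xrightarrow{\Delta \times \carriers} \clA \times \clA \times \carriers \xrightarrow{\clA \times T} \clA \times \carriers \xrightarrow{T} \carriers$ appearing in the lower half of \eqref{par_mnd_axioms}, and that this is exactly the horizontal composite $\hat T \hat T$ read through the projection. Hence a monoid in $\mathrm{coKl}(Q)(\carriers, \carriers)$ is the same data as a monad $\hat T$ on $(\clA \times \carriers, \pi_\clA)$ in $\Cat/\clA$, the identification being the one already implicit in the footnote of \autoref{same_but_4monads} that $T$ "is a monad in $\coKl(\clA\times\firstblank)$". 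I would therefore present \ref{pm_2} $\Leftrightarrow$ \ref{pm_3} as: the hom-category $\mathrm{coKl}(Q)(\carriers,\carriers)$ and the hom-category $(\Cat/\clA)\big((\clA\times\carriers,\pi_\clA),(\clA\times\carriers,\pi_\clA)\big)$ are isomorphic as monoidal categories (both being $\Cat(\clA\times\carriers,\carriers)$ with "substitute a diagonal" composition), so their monoids coincide.

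Concretely the steps are: (i) recall $F \mapsto \hat F$ gives a bijection between parametrized endofunctors and endo-1-cells of $(\clA\times\carriers,\pi_\clA)$ in $\Cat/\clA$, fully faithful on 2-cells (this is the content of the paragraphs before \autoref{theorem:_product_presentation}); (ii) observe that under this bijection the slice composition of endo-1-cells matches the "curried" composition of parametrized endofunctors, so monoid structures correspond — this gives \ref{pm_1} $\Leftrightarrow$ \ref{pm_2}, with the naturality/monad-morphism clause of \autoref{param_endofu} being exactly what makes $\hat\mu,\hat\eta$ legitimate 2-cells in the slice; (iii) identify $(\Cat/\clA)\big((\clA\times\carriers,\pi_\clA),-\big)$ with $\mathrm{coKl}(\clA\times\firstblank)(\carriers,-)$ as monoidal hom-categories, and conclude \ref{pm_2} $\Leftrightarrow$ \ref{pm_3}; (iv) assemble, noting all identifications are canonical and strictly compatible. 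The main obstacle I anticipate is purely organizational rather than mathematical: being careful about \emph{strictness} — the 2-comonad $\clA\times\firstblank$ and its coKleisli construction should be taken strictly so that "monad" means an honest monoid, and one must double-check that the slice $\Cat/\clA$ is likewise being used strictly (the footnote in \autoref{subsec:parametricity} warns that triangles are only asked to commute up to iso, so I would flag that for the purposes of \emph{this} theorem we use the strict slice, or else upgrade "monad" to "pseudomonad" uniformly across \ref{pm_2} and \ref{pm_3}). A secondary fiddly point is checking that the unit of the coKleisli composition — the counit-twisted identity $\clA \times \carriers \xrightarrow{\pi_\carriers} \carriers$ — corresponds under the dictionary to the identity 2-cell-ish unit $\hat\eta$, but this is routine once the composition law is matched.
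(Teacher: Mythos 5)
Your proposal is correct and follows essentially the same route as the paper's (much terser) proof: the key identification in both is that the coKleisli composite $T\bullet T = T\circ(\clA\times T)\circ(\Delta\times\clX)$ coincides with the slice composite $\hat T\hat T$ read through the second projection, so all three descriptions carry the same monoid data. Your added care about using the \emph{strict} slice (against the paper's footnote allowing triangles in $\Cat/\clA$ to commute only up to isomorphism) is a legitimate point that the paper's proof elides.
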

\begin{proof}
Clearly, a monad like in \ref{pm_1} is an endo-1-cell of $\clX$ in the 2-category of \ref{pm_3}. Now it is enough to observe that the unit and multiplication of a parametric monad $T : \clA\too\Mnd{\clX}$ can be seen as 2-cells $\eta : \pi_\clX\To T$ and $\mu : T\bullet T \To T$ where $\pi_\clX : \clA\times\clX \too\clX$ is the projection functor and
	\[\label{nobody_expects_cokl_0} T\bullet T :
\clA\times\clX \xto{\Delta\times\clX}
\clA\times\clA\times\clX \xto{\clA\times T}
\clA\times\clX \xto{T} \clX\]
is a coKleisli composition.

As for \ref{pm_2}, note that we have to require $T$ to be a monad in $\Cat/\params$ and not in \emph{fibrations} over $\params$, because the adjunction $\params\times\carriers \leftrightarrows \algTotal[T]\params\carriers$ splitting the monad in \ref{pm_2} isn't always internal to fibrations (not all Cartesian arrows are preserved). 
\end{proof}
An immediate corollary of \autoref{as_monads_in_cokl} is the following.
\begin{remark}[Distributive laws between parametric functors]
	\autoref{proposition:_limits}, and \ref{as_monads_in_cokl} even more, are quite useful in pinpointing the correct notion of \emph{distributive law} between parametric endofunctors $S,T$
    . 
    Recall that the classical notion of distributive law consists of a `lax intertwiner' $\lambda : ST\To TS$ between functors $S$ and $T$, so that it is compatible with the respective structure of $S$ and $T$ (them being monad and monad; comonad and monad; monad and comonad, etc.; cf.\, \cite[Appendix C.1]{Aguiar2020} for a thorough discussion on the matter of intertwining $p$ monads and $q$ comonads).

	Now \autoref{as_monads_in_cokl} allows describing distributive laws between parametric moands as distributive laws between a monad $T : \clA\times\clX \too\clA\times\clX$ on $\pi_\clA$, and a monad $S : \clB\times\clY\too \clB\times\clY$ on $\pi_\clB$. Though perhaps unpleasant, unwinding what this latter characterisation boils down to is just a matter of bookkeeping the notation.
\end{remark}
\autoref{as_monads_in_cokl} yields a concise characterisation of the simple fibration as a coKleisli object: a fibration $\var[p]\clE\clA$ is equivalent to $\klalg\clA\clX$ (resp., $\coklalg\clA\clX$) if and only if it is the (co)Kleisli object of a monad fibred over the projection.

An immediate consequence of such a characterisation is that 
\begin{remark}[The universal property of the simple fibration]
The simple fibration of \autoref{fib_reg_rep} and \autoref{fib_reg_rep} of a Cartesian category has a universal property in the 2-category $\Cat/\clA$; it is the coKleisli object of the comonad
	\[\vxy[@R=0cm]{
(A,X)\ar@{|->}[rr] && (A,A\times X) \\
\clA\times\clA\ar[rr]\ar[dddddr]_{\pi_\clA} && \clA\times\clA \ar[dddddl]^{\pi_\clA}\\
&&\\&&\\&&\\&&\\
&\clA &
}\]
\end{remark}
\begin{remark}
We have a complementary intuition on this construction: mimicking what happens when a monoid $M$ acts on itself on the left under the map $m\mapsto \lambda x.\,mx$, we call $P$ of Example~\ref{fib_reg_rep} the \emph{regular representation} of the cartesian monoidal category $(\clX,\times)$ on itself. In fact, one can easily generalize the example with this latter, more algebraic, intuition in mind.
\end{remark}
\begin{remark}
Monadicity and \autoref{remark_on_endofu} allow for swift characterizations of op/fibrations of co/algebras as universal objects; for example, the total Mealy opfibration of \autoref{example_mealy} comes up as the object of $R$-coalgebras in $\Cat/(\clK^\op\times\clK)$ for the aforementioned endofunctor $R$, and thus as the inserter
\[\vxy{
		\Mly \drtwocell<\omit>{} \ar[r]^-V\ar[d]_V& (\clK^\op\times\clK)\times\clK \ar@{=}[d]\\
		(\clK^\op\times\clK)\times\clK \ar[r]_-R & (\clK^\op\times\clK)\times\clK.
	}\]
The discussion in this section and \autoref{theorem:_product_presentation} also form the basis of comparing arbitrary fibrations against fibrations of algebras. A fibration $(\fibTotal,p)$ is an \textEM fibration if and only if there exists a morphism $(\fibTotal,p) \too (\params \times \carriers, \pi_\params)$ in $\Cat/\params$ which is \emph{monadic}. In \autoref{theorem:_the_fibration-monad_adjunction}, we shall see how, given a suitably nice fibration $p$, one can construct the appropriate fibration of algebras to compare it against.
\end{remark}

\subsection{The semidirect product intuition, and actegories}\label{subsec:act}

First of all, let us begin with a few results framing known results of group and monoid theory in our language. We believe they substantiate our intuition of $\emalg\Grp\Grp$ as a higher-dimensional version of a semidirect product: the semidirect product operation happens to be a functor with domain $\emalg\Grp\Grp$, and furthermore a left adjoint.

\begin{proposition}\label{ltimes_from_ltimes_grp}
	Let $G$ be a group, and $\alpha : G\times G\to G$ be the conjugation action $(g,h)\mapsto g^{-1}hg$ of $G$ on itself.
	Then, there is a functor
	\[
		\vxy{
			r_\alpha : \Grp \ar[r] & \emalg\Grp\Grp
		}
	\]
	sending $G$ to the object $(G,\alpha)^G\in\emalg\Grp\Grp$. The functor $r_\alpha$ has a left adjoint, sending a $G$-group $(H,\psi)^G$ to the \emph{semidirect product} $G\ltimes_\psi H$.
\end{proposition}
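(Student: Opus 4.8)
The plan is to reduce the claimed adjunction to the classical universal property of the semidirect product, after rewriting the two relevant hom-sets as elementary data about group homomorphisms. So I would proceed in three moves: check that $r_\alpha$ is a well-defined functor; unwind a morphism into $r_\alpha(K)$; and recognize the result as the defining property of $G\ltimes_\psi H$.

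First I would verify functoriality of $r_\alpha$. For each group $G$ the map $\alpha_G\colon G\times G\too G$, $(g',h)\mapsto g'^{-1}hg'$, restricts for fixed $g'$ to an inner automorphism, and $g'\mapsto\alpha_G(g',\firstblank)$ is a homomorphism into $\mathrm{Aut}(G)$; hence $\alpha_G$ makes $G$ into a $G$-group, i.e.\ an algebra for the parametrized monad $T\colon\Grp\too\Mnd{\Grp}$ whose $T_G$-algebras are groups with a $G$-action by automorphisms (this is the analogue for $\Grp$ of the situation in \autoref{example_2}). This yields the object $(G,\alpha_G)^G\in\emalg\Grp\Grp$. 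Given a homomorphism $f\colon G\to G'$, the identity $f(g'^{-1}hg')=f(g')^{-1}f(h)f(g')$ is exactly the condition — read off from \autoref{more_precisely} — for the pair $(f,f)$ to be a morphism $(G,\alpha_G)^G\to(G',\alpha_{G'})^{G'}$; functoriality of $G\mapsto(G,\alpha_G)^G$, $f\mapsto(f,f)$, is then immediate. To produce the left adjoint I would show that for every $G$-group $(H,\psi)^G$ the functor $\emalg\Grp\Grp\big((H,\psi)^G,r_\alpha(-)\big)$ on $\Grp$ is corepresentable, with corepresenting object $G\ltimes_\psi H$.

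The heart is comparing two descriptions of that functor. By \autoref{em_foa} and \autoref{more_precisely}, a morphism $(H,\psi)^G\too r_\alpha(K)=(K,\alpha_K)^K$ in $\emalg\Grp\Grp$ consists of homomorphisms $f\colon G\to K$ and $g\colon H\to K$ such that $g$ is a map of $G$-groups from $(H,\psi)$ into the reindexed algebra $f^*(K,\alpha_K)$; since the reindexed action of $g'\in G$ on $k\in K$ is $k\mapsto f(g')^{-1}kf(g')$, this is the single equation
\[
  g(g'\cdot h)\;=\;f(g')^{-1}\,g(h)\,f(g')\qquad\text{for all }g'\in G,\ h\in H,
\]
where $g'\cdot h$ denotes the action determined by $\psi$. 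On the other hand, $G\ltimes_\psi H$ comes with canonical inclusions $\iota_G\colon G\hookrightarrow G\ltimes_\psi H$ and $\iota_H\colon H\hookrightarrow G\ltimes_\psi H$, and is universal among groups so equipped: a homomorphism $\varphi\colon G\ltimes_\psi H\to K$ is the same as the pair $(f,g)=(\varphi\iota_G,\varphi\iota_H)$ of its restrictions, subject to precisely the equation displayed above, with the inverse passage determining $\varphi$ by $\varphi(\iota_G(g')\iota_H(h))=f(g')g(h)$. These two descriptions agree, naturally in $K$, so by the Yoneda lemma $r_\alpha$ acquires a left adjoint $L$ with $L\big((H,\psi)^G\big)=G\ltimes_\psi H$, unit at $(H,\psi)^G$ the pair $(\iota_G,\iota_H)$, and action on a $1$-cell $(f,g)\colon(H,\psi)^G\to(H',\psi')^{G'}$ given by $(g',h)\mapsto(f(g'),g(h))$ — functoriality coming for free from representability.

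I expect the only genuine obstacle to be bookkeeping the handedness conventions so that all three descriptions match on the nose rather than up to a twist by group inversion: one must fix whether $G$-actions are written on the left or the right, the exact form of the reindexing $f^*(K,\alpha_K)$ produced by \autoref{em_foa}, and the multiplication rule adopted for $G\ltimes_\psi H$ (the one for which \emph{right}-conjugation by $\iota_G(G)$ recovers $\psi$). The choice in the statement of $\alpha$ as $g'^{-1}hg'$ rather than $g'hg'^{-1}$ is exactly the normalization that makes the displayed equation simultaneously the morphism condition in $\emalg\Grp\Grp$ and the universal property of the semidirect product; with the opposite convention one would have to insert a group inversion and the two sides would agree only up to a canonical isomorphism. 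Apart from this, every step is a routine verification.
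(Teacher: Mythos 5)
Your proof is correct and follows essentially the same route as the paper's: both reduce the adjunction to the natural bijection $\emalg\Grp\Grp\big((H,\psi)^G,(K,\alpha_K)^{K}\big)\cong \Grp(G\ltimes_\psi H,K)$, read off from the definition of morphisms in the fibration of algebras together with the universal property of the semidirect product. The only cosmetic difference is that you deduce functoriality of the left adjoint from corepresentability, whereas the paper verifies directly that $(u,f)\mapsto u\ltimes f$ is a group homomorphism compatible with composition.
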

\begin{proof}
	A group homomorphism $\varphi : G\to H$ defines a morphism $(\varphi,\varphi) : (G,\alpha_G)^G\to (H,\alpha_H)^H$ of algebras, because the diagram
	\[\vxy{
		G\cdot G \ar[d]_{\alpha_H} \ar[r]^-{\varphi\cdot\varphi}& H\cdot H\ar[d]^{\alpha_H} \\
		G \ar[r]_\varphi & H
		}\]
	commutes, if $\varphi\cdot\varphi$ is defined as $G\cdot G \xto{G\cdot\varphi} G\cdot H \xto{\varphi\cdot H} H\cdot H$. This defines $r_\alpha$ on morphisms, and clearly composition and identities are preserved.

	The left adjoint to $r_\alpha$ is now defined in terms of the semidirect product operation of groups; let's show that $(H,\psi)^G\mapsto G\ltimes_\psi H$ is indeed a functor $\emalg\Grp\Grp\too\Grp$:
	\begin{itemize}
		\item morphisms in $\emalg\Grp\Grp$ are pairs of maps $(u,f) : (H,\psi)^G\to (K,\theta)^{G'}$ such that
		      \[\label{mor_in_emalg_Grp}\vxy{
				      G\times H\ar[d]_\psi\ar[r]^{u\times f} & G'\times K \ar[d]^\theta\\
				      H \ar[r]_{f} & K
			      }\]
		      Every such morphism induces a function $u\ltimes f : G\ltimes_\psi H\to G'\ltimes_\theta K$ between the semidirect products, defined sending $(g,h)\in G\ltimes_\psi H$ to $(ug,fh)$; evidently the identity of $G\ltimes_\psi H$ is preserved, and given how the composition operation is defined this is a group homomorphism:
		      \begin{align*}
			      (u\ltimes f)((g_1,x)\cdot(g_2,y)) & = (u\ltimes f)(g_1.g_2,\psi(g_2,x).y) \\
			                                        & =(u(g_1.g_2),f(\psi(g_2,x).y))        \\
			                                        & =(u(g_1).u(g_2),f(\psi(g_2,x)).fy)    \\
			                                        & =(u(g_1).u(g_2),\theta(ug_2,fx).fy)   \\
			                                        & =(ug_1,fx)\cdot(ug_2,fy)
		      \end{align*}
		\item composition and identities in $\emalg\Grp\Grp$ are preserved by this definition.
	\end{itemize}
	Now, to prove that there is an adjunction $\_\ltimes\_ \dashv r_\alpha$, one can establish the bijection
	\[\emalg\Grp\Grp\big((H,\psi)^G,(G',\alpha)^{G'}\big)\cong
		\Grp(G\ltimes_\psi H, G')\]
	natural in all its arguments. This essentially follows from how a morphism $(u,f) : (H,\psi)^G\to (K,\theta)^{G'}$ is defined in \eqref{mor_in_emalg_Grp}.
\end{proof}

\begin{remark}\label{ltimes_from_ltimes_mon}\label{remark:copower_action_of_monoids}
	In a similar vein, the semidirect product \emph{of monoids} \cite[3.9]{Janelidze+2003+99+114}, \cite[3.2]{Bourn1998} can be characterised as a functor
	\[
		\vxy{
			\emalg\firstblank\firstblank :\algTotal\Mon\Mon \ar[r] & \Mon \,,
		}
	\]
	having domain the total category of the \textEM fibration of the parametrized monad $T  : \Mon \to \Mnd{\Mon}$, where
	\[
		T_A X  \coloneqq A \cdot X = \sum_{a \in A} X
	\]
	acts by copowering a monoid $X$ with the set $A$, and $T_A$ gains its monad structure from the monoid structure on $A$.
 
	This functor preserves all colimits, as the semidirect product $M\ltimes_\phi N$ obtained from a representation $\phi : M\times N \to N$ via monoid homomorphisms is the coequalizer of the diagram
	\[
		\vxy{
			M\times N \ar@<.4em>[r]\ar@<-.4em>[r]& M*N \ar[r] & M\ltimes N
		}
	\]
	where the upper map is defined as $(m,n)\mapsto [m,a(m,n)]$ (seen as equivalence class of a word in $M*N$) and the lower map as $(m,n)\mapsto [n,m]$.

	It can be shown that $\emalg\Mon\Mon$ is locally presentable (using techniques from \cite{MP89}), and thus $\barealg\firstblank\firstblank$ is a left adjoint. Note how it might be difficult to establish its right adjoint explicitly, as the conjugation representation of \autoref{ltimes_from_ltimes_grp} doesn't exist for monoids.
\end{remark}

Notice that we construct $\emalg\Grp\Grp$ as in \autoref{remark:copower_action_of_monoids} starting from parametrized monad that acts via copowering. It can be similarly said for the total category $\Cat \ltimes^\ell \Cat$ of all parametrized monads (cf.  \autoref{Cat_ell_cat}), which allows us to consider the 2-functor
\[
	\Cat \ltimes^\ell \Cat \xrightarrow{\ltimes} \Cat
\]
that maps a parametrized monad to the total category of the fibration of \textEM algebras. At an informal level, we can consider this as an instance of the `micro-macrocosm principle': a construction of a certain type (the semidirect product of monoids) arises as a functor defined from a domain of a similar `semidirect product' form. 

With this intuition in mind, we now broaden our perspective in order to grasp the full extent of how things move from dimension 1 (of monoids and groups) to dimension 2 (of categories). We begin by recalling that it was pointed out in \cite{BJK:internal_object_actions} that, provided that $\params$ admits coproducts, there is an equivalence of categories\footnote{If $\clX$ admits enough colimits so that $\Mnd\carriers$ has coproducts, this latter category is in turn equivalent to the category of coproduct preserving functors $\params^\coprod\too\Mnd\carriers$, where $\params^\coprod$ is the free coproduct completion of $\params$, but we will not need this further characterization. Another characterization that we will not need is that these equivalence of categories are in fact equivalences of 2-categories.} between
\begin{itemize}
	\item functors of type $\params\too\Mnd\carriers$,
 	\item lax monoidal functors of type $(\params,+)\too\en\carriers$.
\end{itemize}
In the language of \cite{075b1350ce224f5e9f9c4e2642823a04}, lax monoidal functors of this kind would also be called \defn{lax }$(\params,+)$\defn{-actegories}.

The equivalence is easy to describe explicitly: given a functor $F  : \params \too \Mnd{\carriers}$, one defines laxators
\[
	F_A\circ F_B \xRightarrow{\iota_A \circ \iota_B}  F_{A+B}\circ F_{A+B} \xRightarrow{\mu^{A+B}}  F_{A+B}
\]
and a unitor $\eta^\varnothing : \id\carriers\To F_\varnothing$ is given by the unit of the monad $F_\varnothing$. Vice versa, any lax monoidal functor $F  : (\params,+)\to (\en\carriers ,\circ)$ lets us consider each $F_A$ as monad with multiplication
\[
	\vxy{F_A\circ F_A \ar@{=>}[r] & F_{A+A} \ar@{=>}[r] & F_A.}
\]
Now, each monoidal category $(\params,\otimes)$ equips the endo-2-functor $\params \times \firstblank$ of $\Cat$ with the structure of a $2$-monad, the (lax, pseudo, strict) algebras of which are precisely the (lax, pseudo, strict) $(\params,\otimes)$-actegories. This is a banality: monoidal categories are precisely pseudomonoids in $\Cat$; also, this is just the 2-dimensional, lax analogue of Example~\ref{example_2}.

Restricting our attention to coCartesian monoidal categories, this point of view allows us to see the category of parametrized monads as the 2-category $\coCartCat \ltimes^\ell \Cat$ of lax actegories over coCartesian monoidal categories, corresponding to the parametrized $2$-monad
\[
	\coCartCat \to \text{2-}\Mnd{\Cat} \,, \quad \clA \mapsto \clA \times \firstblank
\]
that the cocartesian monoidal structure on the categories induces.
\color{red}
\color{black}
\begin{remark} \label{remark:_comonad_algebras}
	One could get around the annoyance of being restricted to coCartesian categories of parameters $\params$ by considering the following variation of the construction.
	The coCocartesian monoidal structure $+:\params \times \params \to \params$ is left adjoint to the (essentially unique) pseudocomonoid structure given by the diagonal $\Delta : \params \to \params \times \params$, which will induce the parametrized $2$-comonad analogue of Example~\ref{fib_reg_rep} on $\params \times \firstblank$.

	Due to $+$ and $\Delta$ being adjoint, the lax algebras of the parametrized $2$-monad $\params \times \firstblank$ are in bijection with the lax \textEM coalgebras of the parametrized $2$-comonad $\params \times \firstblank$. While the use of lax algebras instead of lax coalgebras for a $2$-comonad is a non-standard notion, in this example these lax algebras correspond precisely to parametrized monads, with the additional benefit that the $2$-comonad structure on $\params \times \firstblank$ doesn't require $\params$ to be cocartesian.
\end{remark}

\begin{remark}[On the relation with graded monads]\label{rel_grad_mona}
	Lax monoidal functors from a one-object bicategory are sometimes also called \emph{graded monads}; they have their own notion of algebra, different from ours (cf. \cite[Definition 5.1]{Dorsch2018GradedMA}). The category of algebras of a lax monoidal functor $(\params,+) \to \en\carriers$, considered as a graded monad, is calculated as the \emph{lax limit} of the lax functor $(\params,+) \to \Cat$, which has no evident associated fibration over $\params$.

	A comparison between graded monads and parametrized monads (and thus, implicitly, with our framework) can be found in \cite{fujii20192categorical}. In a few words, our theory of fibrations of \textEM algebras relates with the fairly well-developed theory of graded monads as follows:
	\begin{itemize}
		\item we do not require the representation $T : \params \too \Mnd\carriers$ to be monoidal, \ie we do not require that $T$ lifts to a lax functor from a bicategory --a mere category suffices;
		\item we do not consider algebras for all parameters at once, as in \cite{milius_et_al:LIPIcs:2015:5538,Dorsch2018GradedMA}, but instead for each object separately.
	\end{itemize}
	The second point deserves more explanation and sheds light on the precise comparison between the two concepts: using a technique from \cite[Definition 1.2]{Ozornova2021} it is possible to define a certain bicategory $\Sigma\params$ (circumventing the absence of a monoidal product in $\params$) called the \emph{suspension} of $\clA$, such that parametrized monads $T : \params\times\carriers\too\carriers$ are classified by lax functors $\bar T$ out of $\Sigma\clA$:
	\[\Cat(\params,\Mnd\carriers)\cong \textbf{Lax}(\Sigma\params,\Cat)\]
	given a diagram $\params^\op\to \Cat : A\mapsto \EM(T_A)$, the lax limit of $\bar T : \Sigma\params\to\Cat$ is precisely $\algTotal[\EM]{\clA}{\clX}$. 
 
 This note does not further analyze the consequences of this remark, as no example or application gets easier when $T$ is replaced by $\bar T$; but in a forthcoming, more comprehensive work, we plan to delve deeply into the two-dimensional aspects of this topic.
\end{remark}

\section{Completeness and cocompleteness results}
While our definitions comprise many examples, the reader may not yet be convinced of the fruitfulness of approaching algebras using tools from the theory of fibrations. In this section we heavily use it in order to prove results about how to compute limits (cf. \autoref{subsec:lims}), about adjoints to reindexing and cocompleteness (cf. \autoref{subsec:adj_to_reidx}), and about the existence of certain colimits, and accessibility issues (cf. \autoref{subsec:colims}).


\subsection{Limits in fibrations of algebras}\label{subsec:lims}

From \autoref{theorem:_product_presentation}, it immediately follows that,
\begin{proposition} \label{proposition:_limits}
  Let $T : \params\too \Mnd{\carriers}$ be a parametric monad. Then, the forgetful functor
	\[
    \algTotal[T]{\params}{\carriers} \to \params \times \carriers , \quad (\carrier,\xi)^A \mapsto (\param,\carrier)
	\]
	is monadic, and therefore it creates limits.
\end{proposition}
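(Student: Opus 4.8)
The plan is to deduce \autoref{proposition:_limits} directly from \autoref{theorem:_product_presentation}, which already identifies $p^T : \algTotal[T]{\params}{\carriers} \fib \params$, in the slice $\Cat/\params$, with the forgetful functor $U^{\hat T} : \EM(\hat T) \to \params\times\carriers$ for the monad $\hat T : \params\times\carriers \to \params\times\carriers$, $(\param,\carrier)\mapsto(\param,T_\param\carrier)$. First I would observe that the forgetful functor in the statement, namely $(\carrier,\xi)^A\mapsto(\param,\carrier)$, is under that equivalence precisely $U^{\hat T}$: an object $(\param,\carrier,\xi)$ corresponds to the $\hat T$-algebra $(\id[\param],\xi):(\param,T_\param\carrier)\to(\param,\carrier)$ whose carrier is $(\param,\carrier)$, and morphisms match up pointwise as spelled out in the proof of \autoref{theorem:_product_presentation}. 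Hence the functor under scrutiny is, up to the equivalence of categories $\algTotal[T]{\params}{\carriers}\simeq\EM(\hat T)$, literally the forgetful functor out of an \textEM category.

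Next I would invoke the classical fact that for any monad $M$ on any category $\clC$, the forgetful functor $U^M : \EM(M)\to\clC$ is monadic — indeed it is the canonical comparison functor into its own \textEM category, which is an isomorphism; see e.g.\ \cite{Johnson2021pn}. Applying this to $M=\hat T$ and $\clC=\params\times\carriers$ gives that $U^{\hat T}$ is monadic. Since monadicity is stable under composition with equivalences (an equivalence is itself monadic, and a composite of monadic functors need not be monadic in general, but composing a monadic functor with an equivalence on either side preserves monadicity — the generated monad is unchanged up to isomorphism and Beck's conditions transfer), the functor $\algTotal[T]{\params}{\carriers}\to\params\times\carriers$ of the statement is monadic as well. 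The final clause, that it creates limits, is then immediate from the standard fact that the forgetful functor of any monad creates all limits; this again transfers across the equivalence.

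The only mild subtlety — and the place where I would be slightly careful rather than the "hard part", since there is no genuine obstacle here — is bookkeeping the direction and strictness of the equivalence $\algTotal[T]{\params}{\carriers}\simeq\EM(\hat T)$ produced in \autoref{theorem:_product_presentation}, and checking that it is compatible with the two forgetful functors so that the claim about \emph{creation} (not merely preservation and reflection) of limits is legitimate. Concretely, I would note that the correspondence $(\param,\carrier,\xi)\leftrightarrows(\id[\param],\xi)$ is an isomorphism of categories over $\params\times\carriers$ (it is visibly bijective on objects and fully faithful, commuting strictly with the two carrier functors), so creation of limits passes along it verbatim. With that remark in place the proof is a two-line appeal to the generalities about \textEM categories, and no estimate or construction beyond \autoref{theorem:_product_presentation} is needed.
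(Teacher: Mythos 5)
Your proposal is correct and follows exactly the route the paper takes: the paper derives \autoref{proposition:_limits} as an immediate consequence of \autoref{theorem:_product_presentation}, identifying the forgetful functor with $U^{\hat T}:\EM(\hat T)\to\params\times\carriers$ and invoking the standard monadicity and limit-creation properties of \textEM forgetful functors. Your extra care about the equivalence being an isomorphism over $\params\times\carriers$ (so that \emph{creation}, not just preservation, transfers) is a sound and welcome precision.
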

As a corollary, we get an explicit way to compute limits in $\algTotal[T]\params\carriers$.

\begin{remark}

  Let $D : \clJ\too \algTotal\params\carriers$ be a functor; its components are given by  $DJ= \alg{\param_J}{\carrier_J}{\xi_J}$. From this we can consider the limit $\param = \lim_J \param_J$ of all parameters, and from the terminal cone maps $\alpha_J : \param \too \param_J$ we obtain the algebras $\alpha_J^*(\alg{\param_J}{\carrier_J}{\xi_J})$, each of which is a $T_\param$-algebra. 
   Examining what we get, we find that the limit $\alg{\param}{\carrier}{\xi}$ of these objects \emph{computed in $\EM(T_\param)$} yields the limit of $D$.
\end{remark}

\subsection{Adjoints to reindexing, bifibrations and colimits}\label{subsec:adj_to_reidx}

Recall that a fibration is a bifibration if all the reindexing functors have left adjoints \cite[9.1.2]{CLTT}. While the reindexings of a fibration of algebras need not in general have left adjoints, they preserve all types of limits that exist in $\carriers$. To see that this is the case, simply observe that the reindexing functors make the triangles
\[\vxy{
	\Alg(G)
	\ar[rr]
	\ar[dr] 
	& & \Alg(F)
	\ar[dl] 
   \\
	& \carriers &
	}\]
involving the (limit creating) forgetful functors commute. With the adjoint functor theorem in mind, therefore, a fibration of algebras is intuitively never very far from being a bifibration, the main obstruction being the lack of colimits in the fibres.

If we are dealing with \textEM algebras, we can make use of the standard fact that any comparison functor $\EM(T) \too \EM(S)$, induced by a monad morphism $S \To T$, has a left adjoint as soon as $\EM(T)$ has reflexive coequalizers.  

\begin{proposition} \label{prop:monad_algebra_bifibration}
Given a parametrized monad $\params \too \Mnd{\carriers}$, if all the fibres of the fibration $p^T \colon \algTotal{\params}{\carriers} \fib \params$ have reflexive coequalizers, then $p^T$ is a bifibration. 
\end{proposition}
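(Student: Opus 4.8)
The plan is to exhibit, for each morphism $f \colon A' \to A$ in $\params$, a left adjoint to the reindexing functor $f^* \colon \EM(T_A) \to \EM(T_{A'})$, and then to recall that a fibration all of whose reindexings admit left adjoints is a bifibration. The key observation is that $f^*$ is precisely the comparison functor $\EM(T_A) \to \EM(T_{A'})$ induced by the monad morphism $T_f \colon T_{A'} \To T_A$ (this is exactly the content of \autoref{more_precisely} together with \autoref{univ_of_em}): it is the functor sending a $T_A$-algebra $(\carrier,\xi)$ to the $T_{A'}$-algebra $(\carrier, \xi \circ (T_f)_\carrier)$. So the problem reduces entirely to a statement internal to a single fibre, namely the classical fact that a comparison functor between \textEM categories induced by a monad morphism has a left adjoint whenever the codomain has reflexive coequalizers.

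First I would set up the reduction carefully: fix $f \colon A' \to A$, and note that the triangle over $\carriers$ formed by $f^*$ and the two forgetful functors $U^{T_A} \colon \EM(T_A) \to \carriers$, $U^{T_{A'}} \colon \EM(T_{A'}) \to \carriers$ commutes (as already observed in the discussion preceding the proposition). By hypothesis every fibre of $p^T$ has reflexive coequalizers; in particular $\EM(T_{A'})$ does. Then I would invoke the standard result --- for which I would cite the textbook reference already implicitly in play, e.g. the discussion around Linton's theorem and the monadicity material, or simply \cite[Theorem~4.5.6 or similar]{streicher2023fibred}-style sources on \textEM categories --- that for a monad morphism $\alpha \colon S \To T$ on a fixed category, the induced functor $\alpha^* \colon \EM(T) \to \EM(S)$ admits a left adjoint provided $\EM(S)$ has reflexive coequalizers. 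Applying this with $\alpha = T_f$, $S = T_{A'}$, $T = T_A$ gives a left adjoint $f_! \dashv f^*$. Concretely, $f_!(\carrier, \zeta)$ is computed as the reflexive coequalizer in $\EM(T_{A'})$ of the canonical pair $F^{T_{A'}} U^{T_{A'}} F^{T_{A'}} T_f^* \,\substack{\to\\\to}\, F^{T_{A'}} T_f^*$ applied to $(\carrier,\zeta)$ --- but I would not grind through this formula, merely note its existence.

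Having produced a left adjoint to every reindexing functor, I would conclude by the definition of bifibration recalled at the start of \autoref{subsec:adj_to_reidx} (namely \cite[9.1.2]{CLTT}): a fibration is a bifibration exactly when all its reindexing functors have left adjoints, equivalently when it is also an opfibration. Hence $p^T \colon \algTotal{\params}{\carriers} \fib \params$ is a bifibration. I would also remark that the opcartesian lift of $f \colon A' \to A$ at an object $\alg{A'}{\carrier}{\zeta}$ is the evident map into $f_!(\carrier, \zeta)$, so the opfibration structure is explicit.

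The main obstacle is purely bookkeeping rather than conceptual: one must be slightly careful that the hypothesis ``all fibres have reflexive coequalizers'' is exactly what the classical comparison-functor lemma needs (it needs reflexive coequalizers in the \emph{domain} fibre of the comparison, which here is $\EM(T_{A'})$, the fibre over the source of $f$), and that the variance of $T_f$ matches the variance in \autoref{univ_of_em}. Once the identification of $f^*$ with the comparison functor induced by $T_f$ is pinned down, the rest is a direct appeal to known results; no genuinely new argument is required, which is why the proposition is stated without much ceremony.
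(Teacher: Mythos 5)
Your overall route is the same as the paper's: the paper offers no argument beyond the sentence preceding the proposition, which reduces everything to the standard fact that the functor $\alpha^* \colon \EM(T) \to \EM(S)$ induced by a monad morphism $\alpha \colon S \To T$ acquires a left adjoint once suitable reflexive coequalizers exist, and you make exactly this reduction, correctly identifying $f^*$ with the comparison functor induced by $T_f \colon T_{A'} \To T_{A}$. Your conclusion is correct, because the blanket hypothesis supplies reflexive coequalizers in \emph{every} fibre. However, the one point you single out as requiring care --- the variance of the classical lemma --- is the point you get backwards. The left adjoint $f_! \colon \EM(T_{A'}) \to \EM(T_A)$ is constructed, on a $T_{A'}$-algebra $(\carrier,\zeta)$, as the reflexive coequalizer of the pair $T_A T_{A'}\carrier \rightrightarrows T_A\carrier$ (the maps being $T_A\zeta$ and $\mu^{T_A}_\carrier \circ T_A (T_f)_\carrier$, with common section $T_A\eta^{T_{A'}}_\carrier$), and this coequalizer lives in $\EM(T_A)$: the \emph{codomain} of $f_!$, equivalently the \emph{domain} of the comparison functor $f^*$, equivalently the fibre over the \emph{target} of $f$. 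This agrees with the paper's phrasing (``as soon as $\EM(T)$ has reflexive coequalizers'', where $\EM(T)$ is the domain of the comparison functor). Your claims that the lemma needs reflexive coequalizers in the codomain of the comparison, that the relevant fibre is $\EM(T_{A'})$ over the source of $f$, and that the coequalizer defining $f_!(\carrier,\zeta)$ is computed in $\EM(T_{A'})$, are all incorrect --- no colimit taken in $\EM(T_{A'})$ could produce the required object of $\EM(T_A)$ --- and your final parenthetical even calls $\EM(T_{A'})$ the ``domain'' of the comparison while simultaneously identifying it as the fibre over the source, contradicting your own earlier (correct) description of $f^*$. Since the proposition assumes reflexive coequalizers in all fibres, the slip is harmless for the statement at hand, but it would matter in any sharpening that only assumes them over selected objects of $\params$.
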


In particular, if $\carriers = \Set$, the fibration of \textEM algebras is a bifibration.
Note that under the assumptions of \autoref{prop:monad_algebra_bifibration}, if the category $\carriers$ is additionally cocomplete, then Linton's construction of colimits of algebras \cite{Linton_coeqs} implies that the fibres of the fibration of Eilenberg-Moora algebras are all cocomplete as well.




When dealing with endofunctor algebras, in order to make the situation more tractable, we start with the classifying case, namely with fibrations as in Example~\ref{subexam_algebras}, with some additional properties on the category $\clX$ (=that it is $\kappa$-accessible) and restricting to the functors $F\in\en\clX[<\kappa]$ (=the subcategory of $\kappa$-accessible functors), it is possible to employ a specific form of the adjoint functor theorem to prove the existence of left adjoints for each reindexing.
\begin{theorem}\label{swindle_theorem}
	Let $\clX$ be $\kappa$-presentable and assume that the fibration of algebras is restricted to just the $\kappa$-accessible functors $\clX\too \clX$, then each reindexing $\alpha^*$ has a left adjoint $\opFibReindex{\alpha}$.
\end{theorem}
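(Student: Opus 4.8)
The goal is to show that, for a $\kappa$-presentable (= locally $\kappa$-presentable) category $\clX$ and the fibration of algebras restricted to the full subcategory $\en\clX[<\kappa]$ of $\kappa$-accessible endofunctors, each reindexing functor $\alpha^* : \Alg(T)\too\Alg(S)$ associated to a natural transformation $\alpha : S\To T$ has a left adjoint. The plan is to invoke the adjoint functor theorem in the form that applies to locally presentable categories: a functor between locally presentable categories has a left adjoint if and only if it preserves all limits \emph{and} is accessible (preserves $\lambda$-filtered colimits for some $\lambda$). So the strategy splits into three steps: (i) show that every category $\Alg(F)$ with $F\in\en\clX[<\kappa]$ is locally presentable; (ii) show that $\alpha^*$ preserves all limits; (iii) show that $\alpha^*$ is accessible.

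For step (i), I would use the standard fact that $\Alg(F)$ is the inserter (equivalently, an iso-inserter / equifier-free 2-limit) of the pair of functors $\id[\clX],\, F : \clX\too\clX$, or more concretely that $\Alg(F)$ is the strict pullback of $U^{\en\clX}$ along the functor picking out $F$; since $F$ is $\kappa$-accessible, it preserves $\kappa$-filtered colimits, and the category of algebras for an accessible endofunctor on a locally presentable category is again locally presentable (this is a classical result; one can cite Adámek--Rosický, \emph{Locally Presentable and Accessible Categories}, or observe that $\Alg(F)$ is sketchable). In particular $\Alg(F)$ is complete and cocomplete. For step (ii), I would note — exactly as in the paragraph preceding \autoref{prop:monad_algebra_bifibration} — that the forgetful functors $U_F : \Alg(F)\too\clX$ and $U_S : \Alg(S)\too\clX$ create limits, and the triangle $U_S\circ\alpha^* = U_F$ commutes; since $U_S$ creates (hence reflects) limits and $U_F$ preserves them, $\alpha^*$ preserves all limits that exist in $\Alg(F)$, which by step (i) is all small limits.

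For step (iii), the key point is that $\alpha^* : \Alg(T)\too\Alg(S)$ acts as the identity on underlying carriers, sending $(X,\xi : TX\to X)$ to $(X,\xi\circ\alpha_X : SX\to X)$, and acts as the identity on the underlying morphisms of algebra homomorphisms. Filtered colimits in $\Alg(F)$ are created by $U_F$ (again because $F$ is $\kappa$-accessible, $\kappa$-filtered colimits of algebras are computed on carriers), and likewise for $\Alg(S)$; since $\alpha^*$ is the identity on carriers and compatible with these created colimits, it preserves $\kappa$-filtered colimits, hence is $\kappa$-accessible. Combining (i)--(iii), the adjoint functor theorem for locally presentable categories yields the desired left adjoint $\opFibReindex{\alpha}\dashv\alpha^*$.

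The main obstacle — and the reason the hypothesis ``$\kappa$-accessible functors'' cannot be dropped — is step (i): without accessibility of $F$ there is no reason for $\Alg(F)$ to be cocomplete or locally presentable, so neither the construction of colimits in the fibres nor the accessibility bound needed for the adjoint functor theorem is available; once local presentability of the fibres is secured, steps (ii) and (iii) are routine diagram-chasing using the carrier-preserving nature of $\alpha^*$. A secondary subtlety worth a remark is that one should check the restricted construction $\int\Alg_{\en\clX[<\kappa]}$ is well-defined, i.e. that reindexing along $\alpha$ between $\kappa$-accessible functors lands again among $\kappa$-accessible functors on the \emph{algebra} side — but this is immediate since reindexing does not change carriers and the relevant accessibility is that of the base $\clX$ and of the indexing endofunctors, both of which are unaffected.
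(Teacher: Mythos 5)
Your argument is correct, but it takes a genuinely different route from the paper's. The paper proves the theorem by an explicit transfinite construction it calls the ``Freyd swindle'': given $\alpha : F\To G$ and an $F$-algebra $\xi : FX\to X$, it builds an iterated chain of pushouts $X\to P_0\to P_1\to\cdots$ whose colimit $P_\infty$ carries a canonical $G$-algebra structure $\zeta_\infty : GP_\infty\to P_\infty$ --- the accessibility of $G$ is used exactly to guarantee $GP_\infty\cong\colim\, GP_n$, so that the chain stabilises --- and this $(P_\infty,\zeta_\infty)$ is the value of the left adjoint. You instead establish that each fibre $\Alg(F)$ is locally presentable, that $\alpha^*$ is continuous (via the limit-creating forgetful functors and the commuting triangle over $\clX$) and $\kappa$-accessible (because $\kappa$-filtered colimits of algebras for a $\kappa$-accessible endofunctor are computed on carriers and $\alpha^*$ is carrier-preserving), and then invoke the adjoint functor theorem for locally presentable categories. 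Both arguments are sound, and they are close cousins: the standard proof that $\Alg(F)$ is locally presentable for accessible $F$ on a locally presentable base --- equivalently, that free $F$-algebras exist --- is itself essentially the transfinite pushout construction, so your citation of Ad\'amek--Rosick\'y repackages the same swindle at one remove. What the paper's version buys is an explicit colimit formula for the left adjoint applied to a given algebra; what yours buys is a cleaner, more modular argument that makes visible exactly where each hypothesis enters ($\kappa$-presentability of $\clX$ and accessibility of the endofunctors for local presentability of the fibres and for step (iii)). Your closing remark that the restriction to $\en\clX[<\kappa]$ is only needed on the indexing side, not on the algebra side, is also accurate and worth keeping.
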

\begin{proof}
	The proof follows an argument that we would like to call `Freyd swindle';%
	\footnote{The term `swindle' entered some parts of mathematical practice as a non-derogatory way to refer to `clever tricks, akin to sleights of hand, providing proofs of true statements based on illegal but evocative manipulations'. Example of this might be the offhanded re-bracketing of an infinite sum of real numbers (as in Euler's proofs of some analytic identities), or an infinite direct sums of $R$-modules, as in \cite[Corollary 2.7]{lamlam} where the term `Eilenberg swindle' was apparently coined, or of connected sums of compact manifolds, \cite{Tao_2009}.

		From our category-theoretic standpoint, the sleight of hand is instead a demonstration of stubbornness: to build a certain universal object, you apply a certain universal construction, for example a pushout. The pushout will hardly compute the desired left adjoint $\sum_\alpha(A,a)$. But one insists, and applies the same construction again, to the new piece of data; the pushout at this second stage will hardly be the desired $\sum_\alpha(A,a)$. But you insist, \dots{} and after a certain number of steps you get an answer in the form of a fix-point: in our case, $SP_\infty=S\left(\colim \dots\right)\cong\colim\, S\dots$ is the clever re-bracketing (but apart from a few little sins of omissions in what exactly the isomorphism means, the argument is totally rigorous).

		As for attributing the swindle to P.J. Freyd, more than one bright argument in his mathematical work is based on a similar technique of `doing one thing for a transfinite number of steps'.%
	} we sketch the idea in the case $\kappa=\omega$, for bigger ordinals one argues similarly.

	Let $\alpha^* : \Alg(G) \too \Alg(F) : \alg{G}{Y}{\zeta} \mapsto \alg{F}{X}{\alpha_X \circ \zeta}$ be the functor between algebras induced by a natural transformation $\alpha : F \To G$, and consider an $F$-algebra $\xi : FX\to X$; consider the pushout
	\[\vcenter{\xymatrix{
				FX \xpo \ar[r]^\alpha \ar[d]_{\xi} & GX \ar[d]\\
				X \ar[r]_{t_0}& P_0
			}}\]
	and define inductively the chain at the lower horizontal side of the diagram
	\[\vcenter{\xymatrix{
				FX \xpo \ar[r]^\alpha \ar[d]_{\xi} & GX \ar[r]^{Gt_0}\ar[d]\xpo & GP_0 \ar[r]^{Gt_1}\ar[d]\xpo& GP_1 \ar[d]\ar[r]&\dots \ar[r]\xpo & \boxed{?}\ar[d]\\
				X \ar[r]_{t_0}& P_0 \ar[r]_{t_1} & P_1 \ar[r]_{t_2} & P_2 \ar[r]_{t_3}&\dots\ar[r] & P_\infty
			}}\]
	The accessibility assumption on $G$ now implies that $GP_\infty$ is the colimit of the upper horizontal chain, so $\boxed{?}=GP_\infty$. The pushout cocone then yields a canonical choice of a map $\zeta_\infty : GP_\infty\to P_\infty$.
\end{proof}

\subsection{Colimits in fibrations of algebras}\label{subsec:colims}



In Proposition~\ref{proposition:_limits} we obtained that limits in $\algTotal{\params}{\carriers}$ are created by a monadic functor $\algTotal{\params}{\carriers} \too \params \times\carriers$. Colimits in categories of algebras, on the other hand, are usually way more difficult to compute as, even when they exist, they tend to be complicated objects -- this is the case, for example, for initial endofunctor algebras.

The present subsection deeper investigates the two following connected problems:
\begin{itemize}
	\item
	      how to induce/compute colimits in the total categories of the \textEM or endofunctor algebra fibrations?
	\item
	      how to ensure that reindexings (which preserve all limits, given the fact that these are created in the base) have left adjoints?
\end{itemize}
A partial answer was already given in \autoref{subsec:adj_to_reidx}, but we find that the second problem can be addressed by looking at the `classifying' fibration of algebras and then argue that some relevant property is preserved under pullback. In fact, we leverage on the following result.
\begin{lemma}
	Let $\var[p]{\fibTotal}{\params}$ be a fibration such that each reindexing $\fibReindex{u} : \fibTotal_{\param'} \too \fibTotal_\param$ preserves (co)limits; let $F : \params' \too \params$ be a functor and $q$ the fibration obtained pulling back $p$ along $F$. Then, each reindexing of $q$ also preserves (co)limits.
\end{lemma}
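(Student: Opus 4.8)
The plan is to use the explicit description of the pullback fibration $q$ and to observe that its reindexing functors are (up to canonical isomorphism) reindexing functors of $p$. Recall that if $p \colon \fibTotal \to \params$ is a fibration and $F \colon \params' \to \params$ a functor, then $q \colon F^*\fibTotal \to \params'$ can be described as the fibration whose fibre over $A' \in \params'$ is $\fibTotal_{FA'}$, and whose reindexing along a morphism $u \colon A' \to B'$ in $\params'$ is precisely $\fibReindex{Fu} \colon \fibTotal_{FB'} \to \fibTotal_{FA'}$. This is standard (it is the Grothendieck construction applied to the composite pseudofunctor $\params'^{\op} \xrightarrow{F^{\op}} \params^{\op} \to \Cat$).

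The proof then proceeds as follows. First I would fix notation: write $F_p \colon \params^{\op} \to \Cat$ for the pseudofunctor classifying $p$, so that the reindexing $\fibReindex{u}$ along $u \colon A \to B$ is the component $F_p(u) \colon F_p(B) \to F_p(A)$, which by hypothesis preserves (co)limits. Next I would recall that the pullback $q$ is classified by $F_p \circ F^{\op}$, hence its fibre over $A'$ is $F_p(FA')$ and its reindexing $\fibReindex{u}$ along $u \colon A' \to B'$ is the component $(F_p \circ F^{\op})(u) = F_p(Fu) \colon F_p(FB') \to F_p(FA')$. Since $Fu$ is a morphism of $\params$, this is a reindexing functor of $p$, which preserves (co)limits by assumption. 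Therefore every reindexing of $q$ preserves (co)limits.

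Alternatively, and perhaps more in the spirit of the paper's use of explicit pullback diagrams, one can argue without the Grothendieck construction: given a chosen cleavage of $p$, transport it to a cleavage of $q$ along the pullback square, and check directly that a $q$-Cartesian lift of $u \colon A' \to B'$ over an object lying above $FB'$ is sent by the top functor $F^*\fibTotal \to \fibTotal$ to a $p$-Cartesian lift of $Fu$; the induced reindexing on fibres is then identified with $\fibReindex{Fu}$ up to the canonical isomorphisms comparing different cleavages, and the property of preserving (co)limits is invariant under such isomorphisms.

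I do not expect a genuine obstacle here: the statement is essentially a bookkeeping exercise about cleavages and the functoriality of the Grothendieck construction. The only point requiring a little care is that reindexing functors are only defined up to canonical natural isomorphism (depending on the chosen cleavage), so one must phrase the conclusion as ``$q$ admits a cleavage all of whose reindexings preserve (co)limits'', equivalently ``the classifying pseudofunctor of $q$ lands in the (co)limit-preserving functors'', and note that preservation of (co)limits is stable under pre- and post-composition with equivalences, hence independent of the choice of cleavage.
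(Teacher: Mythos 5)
Your argument is correct and is exactly the routine observation the paper relies on (it states this lemma without proof): the pullback fibration $q$ is classified by $F_p\circ F^{\op}$, so each of its reindexings is, up to the canonical cleavage isomorphisms, a reindexing $\fibReindex{Fu}$ of $p$ and hence preserves (co)limits by hypothesis. Your remark that preservation of (co)limits is invariant under the choice of cleavage is the right point of care, and nothing further is needed.
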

Therefore all fibrations obtained by pulling back the classifying fibration of algebras share its properties in terms of limit-preserving reindexings.

Let us start on our pursuit with the case of monads.
Based on \cite[4.3.6]{Bor2} and on the fact that  $\algTotal\clA\clX$ is monadic over $\clA \times \clX$, one can prove very swiftly the following.
\begin{theorem}
	Let $T : \clA\too\Mnd{\clX}$ be a parametrized monad such that $T$ preserves filtered colimits in each variable separately; if $\clX$ is cocomplete, then so is $\algTotal\clA\clX$.
\end{theorem}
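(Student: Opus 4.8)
The plan is to reduce the statement to a well-known cocompleteness criterion for \textEM categories, taking advantage of the monadicity already established. Recall from \autoref{theorem:_product_presentation} (and the monadicity recorded in \autoref{proposition:_limits}) that $\algTotal[T]{\clA}{\clX}$ is equivalent, over $\clA$, to the \textEM category $\EM(\hat T)$ of the monad
\[
	\hat T : \clA\times\clX \too \clA\times\clX, \qquad (\param,\carrier)\mapsto (\param, T_\param\carrier),
\]
with the forgetful functor $\algTotal[T]{\clA}{\clX}\too\clA\times\clX$ being the monadic $U^{\hat T}$. Since $\clA$ and $\clX$ are cocomplete, so is their product $\clA\times\clX$; consequently, by \cite[4.3.6]{Bor2} --- which yields cocompleteness of the \textEM category of an accessible monad on a cocomplete category --- it suffices to show that $\hat T$ is an accessible monad, \ie that it preserves filtered colimits.

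Thus the only point to check is that $\hat T : \clA\times\clX\too\clA\times\clX$ preserves filtered colimits, and this is exactly where the hypothesis on $T$ enters. Filtered colimits in $\clA\times\clX$ are formed componentwise. On the first coordinate $\hat T$ is the identity functor, which preserves all colimits; on the second coordinate $\hat T$ applies the bifunctor $T : \clA\times\clX\too\clX$, which by assumption preserves filtered colimits separately in each variable. I would then invoke the standard fact that a bifunctor preserving filtered colimits separately in each variable preserves them jointly: for a filtered category $\clJ$ and a diagram $(D_1,D_2) : \clJ\too\clA\times\clX$ one has
\[
	T(\colim_i D_1 i,\ \colim_j D_2 j)\ \cong\ \colim_{(i,j)\in\clJ\times\clJ} T(D_1 i, D_2 j)\ \cong\ \colim_{j\in\clJ} T(D_1 j, D_2 j),
\]
the last isomorphism being the finality of the diagonal $\clJ\too\clJ\times\clJ$, which holds precisely because $\clJ$ is filtered. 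Hence $\hat T$ preserves filtered colimits, as wanted.

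With accessibility of $\hat T$ in place, \cite[4.3.6]{Bor2} applies verbatim to $\hat T$ on the cocomplete category $\clA\times\clX$, so $\EM(\hat T)\simeq\algTotal[T]{\clA}{\clX}$ is cocomplete. As the hint suggests, the argument is very short: the only step that is not purely formal is the passage from separate to joint preservation of filtered colimits --- the finality of the diagonal of a filtered category --- and this is where I expect the (very mild) obstacle to lie. Concretely, the resulting colimit can be described by first forming the colimit $\param=\colim_j\param_j$ of the underlying parameters in $\clA$, then pushing the whole diagram into the fibre $\EM(T_\param)$ --- which is cocomplete by \cite[4.3.6]{Bor2} applied to the accessible monad $T_\param$ on $\clX$ --- along the opcartesian lifts of the coprojections (these exist since $p^T$ is a bifibration here, its fibres having reflexive coequalizers, so \autoref{prop:monad_algebra_bifibration} applies), and finally taking the colimit in that fibre.
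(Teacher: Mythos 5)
Your proof is correct and is exactly the argument the paper intends: the paper only remarks, immediately before the statement, that the result follows ``very swiftly'' from the monadicity of $\algTotal[T]{\clA}{\clX}$ over $\clA\times\clX$ together with \cite[4.3.6]{Bor2}, and you supply precisely the details it leaves implicit (passing from separate to joint preservation of filtered colimits via finality of the diagonal of a filtered category, then applying the cocompleteness criterion to $\hat T$). Note only that, as you tacitly assume, cocompleteness of $\clA$ is also needed for $\clA\times\clX$ to be cocomplete --- a hypothesis the statement omits but which the paper's own sketch requires just as much.
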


\begin{remark}
	In order to proceed with the discussion, let's make blanket assumptions the ones in \autoref{swindle_theorem}, \ie let's restrict the action of the pseudofunctor $F\mapsto \Alg(F)$ to accessible functors (and monads) defined on an accessible base category $\carriers$. More formally, we consider the accessible analogue of \eqref{pb_endo} and \eqref{pb_em}, \ie the pullbacks
	\[\vxy{
		(\algTotal{\en\carriers}{\carriers})_{<\kappa}\ar[r]\ar[d]_{U_{<\kappa}}\xpb & \algTotal{\en\carriers}{\carriers}\ar[d]^U &
		(\algTotal{\Mnd\carriers}{\carriers})_{<\kappa}\ar[r]\ar[d]_{U_{<\kappa}}\xpb & \algTotal{\Mnd\carriers}{\carriers}\ar[d]^U\\
		\en\carriers[<\kappa] \ar@{^{(}->}[r]& \en\carriers &
		\Mnd\carriers_{<\kappa} \ar@{^{(}->}[r]& \Mnd\carriers \\
		}\]
\end{remark}

We are then left with the problem of establishing how colimits in each fibre $\falgFibre \param \carriers$ of a fibration of \textEM algebras for $T$ induce, if anything, \emph{global} colimits in the whole $\barealg\params\carriers$, provided some additional assumptions on $T$ are made. The issue is somewhat subtle, as it is well-known that being `internally' cocomplete as an object in $\Fib$ is a weaker property than having a cocomplete total category, \cite[p. 87]{streicher2023fibred}. Are total categories of fibrations of algebras cocomplete in the weak sense, or in the strong sense? And how does this relate to assumptions made on $\params,\carriers$ or $T : \params\times\carriers \too \carriers$?

One preliminary remark is the following well-known observation by Linton \cite{Linton_coeqs}: an \textEM category over a cocomplete base has coproducts as long as it has certain specific reflexive coequalisers, and it is cocomplete if it has all reflexive coequalisers.
Computing such coequalisers is a difficult task in general, although a general recipe has been given by Linton \cite{Linton_coeqs}.
Calculations applying this general result provide us with the following.
\begin{proposition}\label{prop:fib_for_colim}
	In the specific case of $\algTotal\params\carriers$ regarded as an \textEM category (cf. \autoref{theorem:_product_presentation}), in the computation of binary coproducts, the coequaliser in question happens in a single fibre -- the one over  $A+B$, the coproduct in $\params$.
\end{proposition}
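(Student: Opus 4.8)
The plan is to transport the question along \autoref{theorem:_product_presentation} and carry it out inside the \textEM category $\EM(\hat T)$ of the monad $\hat T\colon\params\times\carriers\too\params\times\carriers$, $(A,X)\mapsto(A,T_AX)$, identifying $\algTotal[T]{\params}{\carriers}$ with $\EM(\hat T)$ and $p^T$ with the composite $\pi_\params\circ U^{\hat T}$. Two elementary facts are used throughout: binary coproducts in $\params\times\carriers$ are computed componentwise, $(A,X)+(B,Y)=(A+B,X+Y)$; and the free $\hat T$-algebra functor $F\colon\params\times\carriers\too\EM(\hat T)$ satisfies $p^T\circ F=\pi_\params$, since $\hat T$ lies over the projection. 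First I would write out \cite{Linton_coeqs}'s presentation of a binary coproduct in $\EM(\hat T)$: expressing each algebra as its canonical reflexive coequaliser of free algebras and using that coproducts commute with colimits in each variable, the coproduct of two $\hat T$-algebras $\alg{A}{X}{\xi}$ and $\alg{B}{Y}{\theta}$ is the coequaliser in $\EM(\hat T)$ of a reflexive pair of free algebras
\[
F\bigl(\hat T(A,X)+\hat T(B,Y)\bigr)\ \rightrightarrows\ F\bigl((A,X)+(B,Y)\bigr),
\]
that is, of $F(A+B,\,T_AX+T_BY)\rightrightarrows F(A+B,\,X+Y)$, where one leg is $F$ applied to $\xi+\theta$, the other is assembled from the free counits $\varepsilon_{F(A,X)},\varepsilon_{F(B,Y)}$ postcomposed with the coprojections, and the common section is $F$ applied to $\eta+\eta$.

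The core of the argument is then a bookkeeping of the $\params$-component $p^T$. Since coproducts in $\params\times\carriers$ are componentwise, both free algebras displayed above are free on objects whose first coordinate is $A+B$; using $p^T\circ F=\pi_\params$, both therefore lie in the single fibre $\falgFibre{A+B}{\carriers}=\EM(T_{A+B})$. Next I would check that both legs of the pair, and the reflexivity section, are \emph{vertical} over $A+B$. For $F(\xi+\theta)$ this is immediate: the structure map of an $\hat T$-algebra on $(A,X)$ is forced to be vertical over $A$ (as recorded in the proof of \autoref{theorem:_product_presentation}, it has the shape $(\id[A],-)$), so $\xi$ and $\theta$ are vertical over $A$ and $B$, whence $\xi+\theta$ is vertical over $A+B$ and $p^T(F(\xi+\theta))=\pi_\params(\xi+\theta)=\id[A+B]$. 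The one point I expect to require care is the other leg: restricted to a single summand $F\hat T(A,X)$ it is \emph{not} vertical---it lies over the coprojection $A\to A+B$---and it is only after forming the coproduct of the two summands that its first component becomes $[\iota_A,\iota_B]=\id[A+B]$; the section is handled identically. Thus the entire reflexive pair, together with any coequalising cocone out of it, is a diagram living inside the fibre over $A+B$.

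This already yields the statement: the coequaliser featuring in \cite{Linton_coeqs}'s formula for the binary coproduct is the coequaliser of a reflexive pair of vertical maps inside $\falgFibre{A+B}{\carriers}$, so---for instance under the hypotheses of \autoref{prop:monad_algebra_bifibration}, when that fibre has reflexive coequalisers---it is computed there. To close the loop and identify this fibrewise coequaliser with the coproduct in $\algTotal[T]{\params}{\carriers}$ itself, I would append the routine observation that in any fibration a coequaliser of a pair of vertical morphisms, once it exists in the fibre, is also a coequaliser in the total category: every morphism out of an object over $A+B$ factors as a vertical map followed by a cartesian one, and two vertical maps that agree after postcomposition with a cartesian lift already agree, so the universal property transports fibre-to-total by a short diagram chase. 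I anticipate no obstacle beyond the verticality bookkeeping flagged above.
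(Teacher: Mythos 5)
Your proposal is correct and follows essentially the same route as the paper: both invoke Linton's reflexive-coequaliser presentation of the binary coproduct, simplify the coproduct of free algebras via $FU(X,\xi)*FU(Y,\theta)\cong F(U(X,\xi)+U(Y,\theta))$, and observe that since coproducts in $\params\times\carriers$ are componentwise the whole diagram has first component $A+B$, hence lives in the fibre $\EM(T_{A+B})$. Your additional bookkeeping (verticality of the two legs after copairing, and the transport of a fibrewise coequaliser of vertical maps to the total category) is a correct elaboration of what the paper leaves implicit.
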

\begin{proof}
	Letting $*$ denote the coproduct of algebras, computing coequalisers in the specific case of $\algTotal\params\carriers$ regarded as an \textEM category goes as follows: given algebras $(X,\xi)$ and $(Y,\theta)$, their coproduct is the coequaliser of the reflexive pair
	\[\vxy[@C=1.4cm]{
		FUFU(X,\xi) * FUFU(Y,\theta) \ar@<.4em>[r]^-{\epsilon FU(X,\xi) * \epsilon FU(Y,\theta)} \ar@<-.4em>[r]_-{FU\epsilon_X * FU\epsilon_Y} & FU(X,\xi) * FU(Y,\theta)
		}\]
	where the coproduct of free algebras is simplified by the observation that $FUX*FUY\cong F(UX+UY)$ since $F$ is a left adjoint.

	In the specific case of $\emalg\clA\clX$ the coequaliser that defines the coproduct $(X,\xi)^A*(Y,\theta)^B$ is then
	\[\vxy[@C=1.4cm]{
			\big(T_{A+B}(T_AX+T_BY),\mu^{A+B}\big)^{A+B} \ar@<.4em>[r] \ar@<-.4em>[r]& \big(T_{A+B}(X+Y), \mu^{A+B} \big)^{A+B}
		}\]
	where it is not fundamental to make explicit what objects are involved, but rather that the coequaliser in question happens in a single fibre, the one over $A+B$ (coproduct in $\clA$). This colimit can't in general be reduced further (although there are cases where it acquires a more explicit form, for example when $A$ is the initial object of $\clA$).
\end{proof}
In light of this, our fibre-wise approach seems well justified. As in the case of \autoref{swindle_theorem}, additional properties on the categories involved make the problem more tractable, so much so that we can prove the following.

\begin{theorem}\label{thm:accessible}
	Let $\clA,\clX$ be both $\kappa$-accessible categories for some regular cardinal $\kappa$; furthermore, let all the parametrized functors $T_A$ that we consider be $\kappa$-accessible.
	Then the pseudofunctor $\Alg_\clX : \en\clX[<\kappa]^\op \too \Cat$ is accessible in the sense of \cite[5.3.1]{MP89}, which means that
	\begin{itemize}
		\item the base category and each fiber are accessible,
		\item each reindexing is an accessible functor,
    \item $\Alg_\clX(\colim\, T_i)\cong \lim_i \Alg_\clX(T_i)$ for every $\kappa$-filtered diagram $\clJ \too \en\carriers[<\kappa]$.
	\end{itemize}
\end{theorem}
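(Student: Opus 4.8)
The plan is to verify, one at a time, the three conditions that unwind the definition of an accessible pseudofunctor in \cite[5.3.1]{MP89}; each of them will follow from a standard closure property of accessible categories and accessible functors (\cite[Ch.~5]{MP89}).

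First I would handle the base and the fibres. Since $\clX$ is $\kappa$-accessible, left Kan extension along the inclusion $\clX_\kappa\hookrightarrow\clX$ of the essentially small category of $\kappa$-presentables identifies the category $\en\clX[<\kappa]$ of $\kappa$-accessible endofunctors with the functor category $[\clX_\kappa,\clX]$; a functor category from a small category into an accessible category is accessible, so the base is. For a fibre, I would use the observation from \autoref{remark_on_endofu} that $\Alg_\clX(F)$ is the inserter of $F$ and $\id[\clX]$ in $\Cat$, together with the closure of accessible categories and accessible functors under PIE-limits --- in particular under inserters --- in $\Cat$; this yields at once that $\Alg_\clX(F)$ is accessible and that the forgetful functor $U_F\colon\Alg_\clX(F)\too\clX$ is accessible, and moreover that it creates $\kappa$-filtered colimits, since $F$ preserves them. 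The hypothesis that each $F$ be $\kappa$-accessible is used crucially at this point: without it $\Alg_\clX(F)$ need not be accessible at all.

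Next I would treat the reindexings. Given $\alpha\colon F\Rightarrow G$, the reindexing $\alpha^{*}\colon\Alg_\clX(G)\too\Alg_\clX(F)$, $(X,\zeta)\mapsto(X,\zeta\circ\alpha_X)$, commutes with the forgetful functors, $U_F\circ\alpha^{*}=U_G$; since both of those create $\kappa$-filtered colimits, $\alpha^{*}$ preserves $\kappa$-filtered colimits, hence is an accessible functor between accessible categories. For the last clause, let $T_{(-)}\colon\clJ\too\en\clX[<\kappa]$ be a $\kappa$-filtered diagram. Colimits in $\en\clX$ are computed pointwise, so $T:=\colim_i T_i$ has $TX=\colim_i(T_iX)$, and $T$ is again $\kappa$-accessible because $\kappa$-filtered colimits commute with $\kappa$-filtered colimits in $\clX$; thus $T$ is the colimit already inside $\en\clX[<\kappa]$. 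Now a structure map $\xi\colon TX\to X$ is, by the universal property of $TX=\colim_i(T_iX)$, the same datum as a compatible family $(\xi_i\colon T_iX\to X)_i$ of $T_i$-algebra structures on the common carrier $X$ agreeing after reindexing along the transition maps $T_f$ --- which is precisely an object of the $2$-limit $\lim_i\Alg_\clX(T_i)$ of the reindexing diagram, the carriers in such a family being forced to coincide because $\clJ$ is connected. Running the same identification on morphisms and checking naturality assembles into the comparison functor and exhibits it as an equivalence $\Alg_\clX(\colim_i T_i)\simeq\lim_i\Alg_\clX(T_i)$.

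I expect the last clause to be the main obstacle. One has to check that the hands-on identification above really is the canonical comparison functor induced by the reindexings along the colimit coprojections, and one has to settle which flavour of $2$-limit \cite[5.3.1]{MP89} intends. Here I would observe that every reindexing $\alpha^{*}$ is an isofibration --- a lift of an algebra isomorphism is obtained by transporting the structure map along its underlying morphism --- so that the strict $2$-limit and the pseudolimit of $i\mapsto\Alg_\clX(T_i)$ agree up to equivalence, and the ambiguity is harmless. A secondary nuisance, which does not affect the statement but must be tracked in the cardinal bookkeeping for the third clause, is that $\en\clX[<\kappa]$ and the fibres $\Alg_\clX(F)$ are only claimed to be accessible, not $\kappa$-accessible on the nose; sharp presentability may require enlarging $\kappa$.
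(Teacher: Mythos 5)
Your proposal is correct and follows essentially the same route as the paper's (much terser) proof: accessibility of the base via the identification with an essentially small functor category, accessibility of the fibres via closure of accessible categories under $2$-limits (the inserter presentation of \autoref{remark_on_endofu}), accessibility of reindexings from the forgetful functors creating $\kappa$-filtered colimits, and the last clause by unwinding a $T$-algebra structure on $\colim_i T_iX$ into a compatible family of $T_i$-algebra structures. The only added value is that you make explicit two points the paper leaves implicit — the strict-versus-pseudo $2$-limit ambiguity, resolved because the reindexings are isofibrations, and the cardinal bookkeeping for $\en\clX[<\kappa]$ — both of which are handled correctly.
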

\begin{proof}
	The category of $\kappa$-accessible functors $\en\clX[<\kappa]$ is itself accessible (although for a bigger cardinal $\lambda\gg\kappa$; if $\clX$ is $\kappa$-\emph{presentable} instead, the category $\en\clX[<\kappa]$ is $\kappa$-presentable). Now, a simple way to get accessibility of all fibers is to appeal the results of \cite[Ch. 5]{MP89} (in particular, all 2-limits of accessible categories are accessible categories).

	Each reindexing is accessible, it follows from the assumption that all $T_A$ are accessible functors, by checking the universal property.

	As for the last condition, we have to prove that the canonical comparison functor
	\[
		\vxy{
			\Alg_\clX(\colim\, T_i) \ar[r] & \lim_i \Alg_\clX(T_i)
		}
	\]
	obtained from the cone $\alpha_i^* : T_i\To \colim\, T_i$ is in fact an equivalence. The proof is a matter of unwinding the definition and checking its universal property.

	This ensures that $\var[p]{\algTotal\params\carriers}\params$ is accessible (which means its total category is accessible, and its projection is an accessible functor).
\end{proof}
Note that the hypotheses are not as restrictive as they seem, as they allow us to cover in one swoop:
\begin{itemize}
	\item all cases in which reindexings are covariant: co\textEM opfibrations, Kle\-i\-sli opfibrations, endofunctor coalgebras\dots; this can also be appreciated directly, relying on the completeness theorem for the 2-category of accessible categories, and presenting the the relevant opfibrations as pullbacks of accessible functors over $\en\clX[<\kappa]$;
	\item the fibrations of endofunctor algebras over accessible categories, cf.\  \autoref{swindle_theorem};
	\item \textEM fibrations: notably, the proof of \autoref{swindle_theorem} can be carried over unchanged, and $s_\infty$ will be an \textEM algebra assuming $S,T$ are monads. But there is a slicker argument in that case: from \cite[4.3.2]{Bor2} we know that a category $\clC^T$ of \textEM algebras for a monad $T : \clC\too\clC$ has all colimits that $T$ preserves. Then, it will immediately follow from the monadicity result that if $\clA\times\clX\too \clA\times\clX : (A,X)\mapsto (A, T_AX)$ preserves colimits of shape $\clJ$, then $\emalg\clA\clX$ has colimits of shape $\clJ$.
\end{itemize}

\section{Comparing fibrations against fibrations of algebras}\label{sec:comparing}
Our main goal in this section is to show how to build a parametrized monad out of a suitably nice fibration $p : \clE\fib\clB$, and explore some consequences of this result: we can then attempt to `measure' how far the original fibration was from being a fibration of algebras of its corresponding parametrized monad.

In doing so, we prove that the construction that associates a parametrized monad to $p$ is `free', in the sense of being a left adjoint to a suitable functor in the opposite direction, building the fibration of \textEM algebras of a parametrized monad. This sets up an adjunction
\[
	\vxy{
		\niceFib{\params} \pair{}{} & \ParMndInit{\params}
	}
\]
between a category of fibrations (that we call \emph{pruned} in \autoref{pruned_fib}) and a category of monads (that we also call \emph{pruned} in \autoref{pruned_monad}), and a fibration $p$ arises as a fibration of \textEM algebras if and only if the unit of this adjunction is an isomorphism at $p$.

Clearly then, fibrations of algebras arise as fixpoints of this construction, hence every fibration of \textEM algebras will be an example of pruned fibration, under mild assumptions on its category of parameters and of carriers.

The first observation that we need to carry on our analysis is that an \textEM fibration has full and faithful left and right adjoints under very mild conditions. The easiest way to see this is using the following standard fact about fibrations.

\begin{lemma}
	A fibration $p  : \fibTotal \fib \params$ has
	\begin{enumerate}
		\item a full and faithful left adjoint $\ladj{p}$ precisely when each of the fibres  $\fibTotal_\param$ contains an initial object $\initial_\param$ and
		\item a full and faithful right adjoint $\radj{p}$ precisely when each of the fibres $\fibTotal_\param$ contains a terminal object $\terminal_\param$ and the reindexings $\fibTotal_\param \to \fibTotal_{\param'}$ preserve terminal objects.
	\end{enumerate}
\end{lemma}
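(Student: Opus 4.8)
The plan is to prove the two items in parallel, in each case reducing the statement to the elementary bookkeeping of cartesian lifts. For (1), I would first recall that a left adjoint $\ladj p$ is full and faithful exactly when its unit $\id[\params]\To p\ladj p$ is invertible, so up to a harmless normalisation $p\ladj p A=A$ for every $A$; thus giving such an adjoint amounts to choosing, for each $A\in\params$, an object $\ladj p A\in\fibTotal_A$ for which the assignment extends to a left adjoint, equivalently for which $\fibTotal(\ladj p A,E)\cong\params(A,pE)$ naturally in $E$. The key computation is that, factoring each $g\colon\ladj p A\to E$ through the cartesian lift of $pg$, one gets $\fibTotal(\ladj p A,E)\cong\coprod_{u\colon A\to pE}\fibTotal_A(\ladj p A, u^*E)$; this coproduct is naturally bijective to $\params(A,pE)$ precisely when each $\fibTotal_A(\ladj p A, u^*E)$ is a singleton, which — letting $u^*E$ range over all of $\fibTotal_A$ — says exactly that $\ladj p A$ is initial in the fibre. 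I would run this chain of bijections in both directions to obtain the `if and only if'.

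For the converse of (1) I would, more carefully, evaluate the adjunction bijection at a vertical target $E\in\fibTotal_A$: the transpose of a vertical morphism $\ladj p A\to E$ is forced to be the (invertible) unit component $\eta_A$, so bijectivity of the transpose gives at most one vertical map, while the transpose-preimage of $\eta_A$ is itself vertical because $\eta_A$ is invertible; hence $\ladj p A$ is initial in $\fibTotal_A$.

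Part (2) is formally dual, but the familiar asymmetry of fibrations is what introduces the extra clause about reindexings. Factoring $g\colon E\to\radj p B$ through the cartesian lift of $pg$ now yields $\fibTotal(E,\radj p B)\cong\coprod_{u\colon pE\to B}\fibTotal_{pE}(E, u^*\radj p B)$, and for this to collapse to $\params(pE,B)$ one needs each $u^*\radj p B$ to be \emph{terminal} in $\fibTotal_{pE}$ — this is precisely the hypothesis that reindexings preserve terminal objects. Conversely, from a full and faithful right adjoint $\radj p$ one recovers the fibrewise terminal objects as the $\radj p A$ (same transpose-of-a-vertical-map argument as above, using the counit $p\radj p\To\id[\params]$), and one deduces preservation of terminal objects under reindexing by counting: the adjunction bijection shows there is exactly one morphism $E\to\radj p A$ over any fixed $u\colon pE\to A$, hence exactly one vertical morphism $E\to u^*\radj p A$, so $u^*\radj p A$ is terminal in its fibre.

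I expect no serious obstacle. The only points that deserve care, and that I would spell out, are the harmless normalisations — that a full and faithful one-sided adjoint may be assumed to section $p$ strictly, and that the unit or counit components arising as transposes of vertical morphisms are invertible — since these are exactly what allow the general adjunction bijection to descend to the fibrewise universal property; everything else is routine manipulation of the universal property of cartesian morphisms.
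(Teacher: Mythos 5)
Your proof is correct and complete. The paper itself does not prove this lemma --- it is stated as a standard fact, with the subsequent remark pointing to \cite[1.8.8]{CLTT} --- and your argument, decomposing $\fibTotal(\ladj{p}\param,E)$ (resp.\ $\fibTotal(E,\radj{p}\param)$) as a coproduct over $\params(\param,pE)$ (resp.\ $\params(pE,\param)$) of fibrewise hom-sets by factoring through cartesian lifts, is exactly the standard one. You also correctly isolate the real content: for the left adjoint the reindexed objects $u^*E$ land in the fibre where the candidate initial object already lives, so no preservation clause is needed, whereas for the right adjoint the candidate terminal object must be reindexed into foreign fibres, which is precisely where the extra hypothesis on reindexings enters; and the normalisations you flag (invertible unit/counit, transposes of vertical maps) are the right points to spell out.
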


\begin{remark}
	There is a concise way to rephrase the above observation in terms of a property of $p$ as an object of the 2-category of fibrations over $\clA$: the Cartesian functor $p : p\too\id[\params]$ has a fibred left (resp., right) adjoint if and only if $p$, regarded as an object of $\Fib/\clA$ admits an initial (resp., terminal) object, which boils down exactly to an initial (resp., terminal) object in each fiber, preserved by reindexings. Cf. \cite[1.8.8]{CLTT}.
\end{remark}
\begin{proposition}
	Let $T  : \params \too \Mnd{\carriers}$ be a parametrized monad. If $\carriers$ has an initial and a terminal object, then each monad $T_\param$ will have an initial algebra $\initial^T_\param$ and a terminal algebra $\terminal^T_\param$ which allow us to define the full and faithful functors
	\[
		\ladj{p}  : \params \too \algTotal{\params}{\carriers}\,,\;\; \param \mapsto \initial^T_\param
		\quad \text{ and } \quad
		\radj{p}  : \params \too \algTotal{\params}{\carriers}\,, \;\; \param \mapsto \terminal^T_\param \,,
	\]
	that are respectively the left adjoint and right adjoint of the functor
	$p : \algTotal{\params}{\carriers} \too \params$.
\end{proposition}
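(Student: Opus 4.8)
The plan is to reduce the statement to the preceding lemma: for each $\param\in\params$ I would exhibit an initial and a terminal object of the fibre $\EM(T_\param)$ of $p^T$, and check that the terminal ones are preserved by reindexing.

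First, the initial $T_\param$-algebra. Since $\carriers$ has an initial object $\initial$ and the monad $T_\param$ yields a free--forgetful adjunction $L_\param\dashv U_\param$ with $L_\param$ a left adjoint, $L_\param$ preserves the empty colimit, so $\initial^T_\param\coloneqq L_\param(\initial)=(T_\param\initial,\mu^\param_\initial)$ is initial in $\EM(T_\param)$. By part~(1) of the preceding lemma, $p^T\colon\algTotal{\params}{\carriers}\fib\params$ then admits a full and faithful left adjoint; a quick analysis of the adjunction --- its unit is an isomorphism, so $p\,\ladj{p}\cong\id[\params]$, and the component of the hom-bijection on vertical maps is thereby forced --- shows that this adjoint must send $\param$ to the initial object of the fibre over it, i.e.\ to $\initial^T_\param$, which is the asserted description of $\ladj{p}$.

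Next, the terminal $T_\param$-algebra. The terminal object $\terminal$ of $\carriers$ admits a unique $T_\param$-algebra structure, namely the unique morphism $!\colon T_\param\terminal\to\terminal$, and $\terminal^T_\param\coloneqq(\terminal,!)$ is terminal in $\EM(T_\param)$: for any algebra $(\carrier,\xi)$ the unique map $\carrier\to\terminal$ of $\carriers$ is automatically an algebra homomorphism, since the square it must fill has $\terminal$ in its bottom-right corner. The one load-bearing point is that reindexings preserve these terminal objects; but $f^\ast$ leaves carriers untouched, so $f^\ast(\terminal,!)=(\terminal,!\circ T_f\terminal)$, and as $\terminal$ carries only one algebra structure this equals $\terminal^T_{\param'}$, which is still terminal in $\EM(T_{\param'})$. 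Part~(2) of the preceding lemma then delivers a full and faithful right adjoint, which by the same analysis as above is $\radj{p}\colon\param\mapsto\terminal^T_\param$.

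I do not expect a serious obstacle. The only nonformal ingredients are the standard facts that a left adjoint preserves initial objects and that an adjoint of a fibration is computed fibrewise, plus the one-line observation that reindexing preserves terminal objects because $\terminal$ has a unique algebra structure. If one preferred to bypass the preceding lemma, the two sections $\param\mapsto\initial^T_\param$ and $\param\mapsto\terminal^T_\param$ could instead be shown adjoint to $p^T$ by directly exhibiting the hom-bijections with $\params(\param,\param')$ and $\params(\param',\param)$, but going through the lemma is cleaner.
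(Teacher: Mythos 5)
Your proposal is correct and follows essentially the same route as the paper: the initial algebra is the free algebra on $\initial\in\carriers$ (a left adjoint preserves initial objects), the terminal algebra is the unique algebra structure on $\terminal\in\carriers$, and since reindexing does not change carriers the terminal algebras are preserved, so the preceding lemma applies. The extra details you supply (the explicit terminality check and the fibrewise description of the adjoints) are all consistent with what the paper leaves implicit.
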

\begin{proof}
	The initial and terminal algebra of $T_\param$ will respectively be given by the free $T_\param$-algebra on the initial object of $\carriers$ and the unique algebra on the terminal object of $\carriers$. As the reindexing of a fibration of algebras does not change the carrier, the terminal algebras will be preserved.
\end{proof}

We mostly make use of the left adjoint, but existence of the right adjoint is an important property which can help identity fibrations of algebras in nature.

\begin{remark}
	The fibration of algebras of a parametrized endofunctor $F  : \params \too \en\carriers$
 will also have a right adjoint if $\carriers$ has a terminal object, and the algebra structure will still be created by the forgetful functor, but since the existence of initial algebras of endofunctors is a much more complicated question than for monads, then the existence of the left adjoint is very much dependent on the particular parametrized endofunctor.
\end{remark}

Since we are dealing with fibrations, there is a well-behaved sense in which we can take `kernels'.
\begin{construction}\label{the_initial_fibre}
	If the codomain of a fibration $p  : \fibTotal \fib \params$ has an initial object $\initial$, then we can consider the fibre $\fibTotal_\initial$ over $\initial$, which we will call the \emph{initial fibre} of $p$. We can interpret the fibre $\fibTotal_\initial$ along with its inclusion $i  : \fibTotal_\initial \to \fibTotal$ as being the \emph{kernel} of the fibration $p$.
\end{construction}
One of the reasons for taking the initial fibre over other fibres is the following.
\begin{proposition} \label{prop:adjoint_of_kernel}
	For any fibration $p  : \fibTotal \fib \params$ where $\params$ contains an initial object $\initial$, the inclusion $i  : \fibTotal_\initial \to \fibTotal$ of the initial fibre has a right adjoint $\radj{i}  : \fibTotal \to \fibTotal_\initial$.
\end{proposition}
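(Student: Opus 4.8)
The plan is to exhibit, for every object $E\in\fibTotal$, a universal arrow from the inclusion functor $i$ to $E$; by the dual of the standard characterisation of adjoints, producing such universal arrows naturally is equivalent to producing the right adjoint $\radj{i}$ together with its counit. Concretely, given $E$ with $p(E)=A$, let $!_A\colon\initial\too A$ be the unique morphism out of the initial object of $\params$. Since $p$ is a fibration, I would choose a Cartesian lift $\varepsilon_E\colon !_A^\ast E\too E$ of $!_A$ at $E$; its domain $!_A^\ast E$ lies in the fibre $\fibTotal_\initial$ by construction, so I set $\radj{i}(E)\coloneqq !_A^\ast E$ and take $\varepsilon_E$ --- which, read through the inclusion, is an arrow $i\bigl(\radj{i}(E)\bigr)\too E$ --- as the candidate component of the counit at $E$.

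The crux of the argument, and essentially its only non-formal point, is verifying the universal property; this is exactly where the hypothesis that $\initial$ is initial is used. Given any $D\in\fibTotal_\initial$ and any morphism $f\colon iD\too E$ in $\fibTotal$, its image $p(f)\colon\initial\too A$ must coincide with $!_A$, simply because $\initial$ is initial. Hence $p(f)=!_A\circ\id[\initial]$, and the universal property of the Cartesian morphism $\varepsilon_E$ over $!_A$ supplies a unique morphism $g\colon D\too !_A^\ast E$ lying over $\id[\initial]$ --- that is, a morphism in the fibre $\fibTotal_\initial$, so $g=i(g)$ --- with $\varepsilon_E\circ i(g)=f$. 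This is precisely the universal-arrow condition, so $\radj{i}$ exists.

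What remains is routine bookkeeping. The universal property determines $\radj{i}$ on morphisms: for $h\colon E\too E'$ over $u\colon A\too A'$ one has $u\circ!_A=!_{A'}$ by initiality, so $h\circ\varepsilon_E$ lies over $!_{A'}$ and factors uniquely through $\varepsilon_{E'}$ via a morphism $\radj{i}(h)$ over $\id[\initial]$; functoriality of $\radj{i}$ and naturality of $\varepsilon$ then follow from the uniqueness clauses in the usual way. I would also point out that, unlike the right-adjoint case of the earlier lemma on full and faithful adjoints to a fibration, \emph{no} condition on the reindexing functors (such as preserving terminal objects) is needed here --- the inclusion $i$ is automatically full, since $\initial$ has no non-identity endomorphisms, and the initial fibre is always a coreflective subcategory of the total category. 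The only genuine subtlety, which is immaterial to the statement, is the dependence on a chosen cleavage for the lifts $\varepsilon_E$; different choices yield canonically isomorphic right adjoints.
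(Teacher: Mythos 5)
Your proof is correct and follows the same route as the paper: the paper's (one-line) proof also defines $\radj{i}(E)$ as the reindexing of $E$ along the unique morphism $\initial \to p(E)$, and your verification of the universal property via the Cartesian lift and initiality of $\initial$ is exactly the argument the paper leaves implicit.
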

\begin{proof}
	The right adjoint $i^R$ acts by mapping an object $E$, living in the fibre over $pE$, into an object $u_!^*E$ in the initial fibre by reindexing $E$ along the unique morphism ${u_! : \initial \to p(E)}$.
\end{proof}

We are now ready to express how we can turn fibrations into adjunctions, sliced over $\clA$ -- \ie in the slice category of $\Cat/\clA$: it is in general too restrictive to require that these adjunctions to be \emph{fibred} over $\clA$, which would require the preservation of cartesian lifts.

This section is based on setting up an adjunction between the total category of a fibration and a certain product of categories. The following lemma shows that 
it indeed involves quite a simple structure.

\begin{lemma} \label{lemma:functor_copairing}
    Given a span $\carriers \xleftarrow{u} \fibTotal \xrightarrow{v} \params$ of functors and the cospan $\carriers \xrightarrow{\ladj{u}} \fibTotal \xleftarrow{\ladj{v}} \params$ of their left adjoints, the induced functor
     $\pairing{v}{u} : \fibTotal \too \params \times \carriers$ has a left adjoint $\coPairSum{\ladj{v}}{\ladj{u}} \colon \params \times \carriers \too \fibTotal$ precisely when the coproduct $\ladj{v}(\carrier) + \ladj{u}(\param)$ in $\fibTotal$ exists for every $\carrier \in \Ob{\carriers}$ and every $\param \in \Ob{\params}$. Defining 
     \[
        \coPairSum{\ladj{v}}{\ladj{u}}(E) \coloneqq \ladj{v}(E) + \ladj{u}(E) 
     \]
     will give us the left adjoint
     $\coPairSum{\ladj{v}}{\ladj{u}} \dashv \pairing{v}{u}$.
\end{lemma}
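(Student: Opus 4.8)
The plan is to verify the hom-set bijection directly, reading off the two halves of $\pairing{v}{u}$ from the universal property of the binary coproduct in $\fibTotal$ together with the adjunction bijections $\ladj{v}\dashv v$ and $\ladj{u}\dashv u$. None of this uses anything fibrational: it is a statement about an arbitrary span of functors. For an object $(\param,\carrier)$ of $\params\times\carriers$ and an object $E$ of $\fibTotal$ I would compute the chain of natural isomorphisms
\[
\fibTotal\bigl(\ladj{v}(\param)+\ladj{u}(\carrier),\,E\bigr)
\;\cong\;
\fibTotal\bigl(\ladj{v}(\param),E\bigr)\times\fibTotal\bigl(\ladj{u}(\carrier),E\bigr)
\;\cong\;
\params\bigl(\param,v(E)\bigr)\times\carriers\bigl(\carrier,u(E)\bigr)
\;\cong\;
(\params\times\carriers)\bigl((\param,\carrier),\,\pairing{v}{u}(E)\bigr),
\]
where the first isomorphism is the defining property of the coproduct in $\fibTotal$, the second is the pair of adjunction bijections, and the third is the description of hom-sets in a product category applied to $\pairing{v}{u}(E)=(v(E),u(E))$. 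Chasing the identity arrow of $\ladj{v}(\param)+\ladj{u}(\carrier)$ backwards through this chain exhibits the unit of the would-be adjunction at $(\param,\carrier)$ as the pair $\bigl(v(\iota_1)\circ\eta^v_\param,\; u(\iota_2)\circ\eta^u_\carrier\bigr)$, where $\iota_1,\iota_2$ are the two coprojections into $\ladj{v}(\param)+\ladj{u}(\carrier)$ and $\eta^v,\eta^u$ are the units of the given adjunctions; from this explicit form, the two triangle identities reduce to the triangle identities for $\ladj v\dashv v$ and $\ladj u\dashv u$ together with the defining equations of $\iota_1,\iota_2$, which is a short mechanical check.

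For the equivalence between existence of the left adjoint and existence of the coproducts $\ladj v(\param)+\ladj u(\carrier)$, I would argue by representability. A left adjoint to $\pairing{v}{u}$ exists if and only if, for every $(\param,\carrier)$, the presheaf $(\params\times\carriers)\bigl((\param,\carrier),\pairing{v}{u}(\firstblank)\bigr)$ on $\fibTotal$ is representable, and then the representing object is the value of the left adjoint. The last two isomorphisms of the display above use only the adjunctions, not the coproduct, so they identify this presheaf with $\fibTotal\bigl(\ladj v(\param),\firstblank\bigr)\times\fibTotal\bigl(\ladj u(\carrier),\firstblank\bigr)$; and a product of two representables $\fibTotal(P,\firstblank)\times\fibTotal(Q,\firstblank)$ is representable exactly when the coproduct $P+Q$ exists in $\fibTotal$, in which case it represents it (this being precisely the definition of binary coproduct). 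Taking $P=\ladj v(\param)$ and $Q=\ladj u(\carrier)$ gives both implications at once, and forces the value of the left adjoint at $(\param,\carrier)$ to be $\ladj v(\param)+\ladj u(\carrier)$, as claimed.

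What remains is routine: functoriality of $(\param,\carrier)\mapsto\ladj v(\param)+\ladj u(\carrier)$ on morphisms and naturality of the bijection in both arguments follow from functoriality of $\ladj v$, $\ladj u$ and of binary coproducts, and can alternatively be absorbed into the standard fact that a pointwise representing object assembles uniquely into a left adjoint once representability is established. I do not expect any genuine obstacle here; the only point that needs a little care is the bookkeeping of the two coprojections when one writes down the unit explicitly and verifies the triangle identities, but that is entirely mechanical.
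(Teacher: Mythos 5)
Your proof is correct, and the paper in fact states this lemma without proof, so there is nothing to diverge from: the pointwise-representability argument you give --- identifying the presheaf $(\params\times\carriers)\bigl((\param,\carrier),\pairing{v}{u}(\firstblank)\bigr)$ with the product of the representables $\fibTotal(\ladj v\param,\firstblank)\times\fibTotal(\ladj u\carrier,\firstblank)$ via the two adjunctions, and observing that such a product is representable exactly when the coproduct exists --- is the standard and evidently intended one, and it correctly delivers both directions of the ``precisely when'' at once. You have also silently corrected the typing slip in the statement as printed (which writes $\ladj v(\carrier)+\ladj u(\param)$ and $\ladj v(E)+\ladj u(E)$ for an object $E$ of the product), which is the right reading.
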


Observe also that if $\pairing{v}{u} : \fibTotal \too \params \times \carriers$ has a left adjoint and $\params$ and $\carriers$ contain initial objects, the left adjoints $\ladj{v}$ and $\ladj{u}$ must necessarily exist.

The aim is to apply \autoref{lemma:functor_copairing} to the span $\carriers \xleftarrow{\radj{i}} \fibTotal \xrightarrow{p} \params$ which is built from a fibration $p$ using \autoref{prop:adjoint_of_kernel}. We will now define a \textit{pruned fibration} as one for which this process can be performed, including additional the assumptions, which will be used to prove \autoref{theorem:_the_fibration-monad_adjunction}.

\begin{definition}[Pruned fibration]\label{pruned_fib}
	The 2-category $\niceFib{\params}$ of \defn{pruned fibrations} is defined as the full 2-subcategory of $\Fib(\params)$ having for objects those fibrations $p : \fibTotal \fib \params$ for which the following conditions are satisfied:
	\begin{itemize}
		\item $p$ has a full and faithful left adjoint $\ladj p$;
        \item the coproduct $\initial_A + E_0$ in $\fibTotal$ exists for all $A \in \Ob{\params}$ and $E_0 \in \Ob{\fibTotal_\initial}$;
        \item $p$ preserves the above coproducts.
	\end{itemize}
	In particular, if $\fibTotal$ has coproducts, all fibrations $p$ admitting a full and faithful left and a right adjoint, denoted respectively $\ladj p$ and $\radj p$, are pruned.

The main goal of this section is to give an explicit process which turns a pruned fibration into a particular kind of parametrized monad.
%
%
%
\end{definition}
\begin{definition}[Pruned monad]\label{pruned_monad}
	The 2-category $\ParMndInit{\params}$ of \defn{pruned monads} is defined as the full 2-subcategory of $\Mnd{\params}$ having for objects  the parametrized monads $T : \params\times\carriers\too\carriers$ that act trivially on the initial parameter (\ie, $T_\initial$ is the identity functor, or isomorphic to it) and whose carrier category has an initial object.
\end{definition}

The rest of the section is essentially an analysis of the construction that maps a pruned fibration $p$ to the monad generated by the adjunction  
$\coPairSum{\ladj{p}}{i} \dashv \pairing{p}{\radj{i}}$ (as in \autoref{lemma:functor_copairing}).

\begin{lemma} \label{lemma:_fibrations_to_adjunctions}

 %
 Let $\params$ be category with finite coproducts and an initial object, and let $p  : \fibTotal \fib \params$ be a fibration with a full and faithful left adjoint $\ladj{p}$. We then have the composite adjunction
	\[\label{composite_adj}
		\vxy{ \params \times \fibTotal_\initial
			\pair {\ladj{p} \times i}{p \times \radj{i}}
			& \fibTotal \times \fibTotal
			\pair {+}{\Delta}
			& \fibTotal
		}
	\]
	which, if $p$ preserves finite coproducts, will be an adjunction between the projection
	$\var[\pi_\params]{\params \times \fibTotal_\initial}{\params}$
	and
	$\var[p]{\fibTotal}{\params}$
	in the slice-2-category $\Cat/\params$. In explicit components, $(p\times \radj{i})\circ \Delta=\langle p,\radj{i}\rangle$, and its left adjoint sends $(\param,E_0)$ to $\ladj{p}\param + iE_0$.
\end{lemma}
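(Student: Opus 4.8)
The plan is to realise the displayed adjunction as a composite of three standard ones and then verify that, under the hypothesis that $p$ preserves finite coproducts, the resulting unit and counit descend to the slice $2$-category $\Cat/\params$.

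First I would record the three ingredient adjunctions: (i) $\ladj{p}\dashv p$, supplied by hypothesis; (ii) $i\dashv\radj{i}$ between $\fibTotal_\initial$ and $\fibTotal$, supplied by \autoref{prop:adjoint_of_kernel}; and (iii) the codiagonal--diagonal adjunction $(+)\dashv\Delta$ on $\fibTotal$, valid as soon as the relevant binary coproducts $\ladj{p}\param+iE_0$ exist (this is exactly the hypothesis isolated in \autoref{lemma:functor_copairing}). Taking the product of (i) and (ii) and composing with (iii) yields an adjunction $L\dashv R$ whose functors compute to $L(\param,E_0)=\ladj{p}\param+iE_0$ and $R=(p\times\radj{i})\circ\Delta=\langle p,\radj{i}\rangle$, with $RE=(pE,\radj{i}E)$; this is the ``explicit components'' claim. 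The counit of $L\dashv R$ at $E$ is the fold map precomposed with the product counit, $\epsilon_E=\nabla_E\circ(\epsilon^{\ladj p}_E+\epsilon^i_E)$, and the unit at $(\param,E_0)$ is obtained by whiskering the coproduct injections $\iota_1,\iota_2$ with $p\times\radj{i}$ after the pair $(\eta^{\ladj p}_\param,\eta^i_{E_0})$; writing these out explicitly is purely formal.

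The substantive step is the descent to $\Cat/\params$. On the right it is immediate, since $\pi_\params\circ R=\pi_\params\circ\langle p,\radj{i}\rangle=p$ strictly. On the left I would use two observations — that $\ladj{p}$ being full and faithful makes the unit $\eta^{\ladj p}\colon\id_\params\To p\ladj{p}$ an isomorphism, and that $i$ factors through the fibre over $\initial$, so that $p\circ i$ is constant at $\initial$ — to deduce, via coproduct preservation,
\[
 p(\ladj{p}\param+iE_0)\ \cong\ p\ladj{p}\param+p(iE_0)\ \cong\ \param+\initial\ \cong\ \param,
\]
so $p\circ L\cong\pi_\params$, which is what the up-to-isomorphism reading of $\Cat/\params$ adopted in \autoref{subsec:parametricity} asks for. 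It then remains to check that $p$ carries $\epsilon$ to an identity and $\pi_\params$ carries $\eta$ to an identity, modulo exactly these canonical isomorphisms. For $\epsilon$, applying $p$ to $\nabla_E\circ(\epsilon^{\ladj p}_E+\epsilon^i_E)$ and using coproduct preservation, the triangle identity $p\epsilon^{\ladj p}_E\circ\eta^{\ladj p}_{pE}=\id$, and $p(i\radj{i}E)\cong\initial$, collapses the composite to $\id_{pE}$; for $\eta$, its $\params$-component is $p\iota_1\circ\eta^{\ladj p}_\param$, which under the same identifications is precisely the structural isomorphism $\param\cong p(\ladj{p}\param+iE_0)$. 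Both are routine diagram chases with the triangle identities and the universal property of the coproduct.

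I do not expect a real obstacle here: the only thing requiring care is bookkeeping the up-to-iso coherences and making sure the hypothesis ``$p$ preserves finite coproducts'' is invoked exactly where it is needed — namely to compare $p\circ L$ with $\pi_\params$ in the first place, and to split $p$ of the counit's source as $pE+\initial$. Everything else is forced by the universal properties of the product, the coproduct, and the three ambient adjunctions. One minor point worth flagging in the writeup is whether one wants $\fibTotal$ to carry \emph{all} binary coproducts, so that $(+)$ is an honest endofunctor on $\fibTotal\times\fibTotal$, or only the coproducts with a $\ladj{p}$-free summand as in \autoref{lemma:functor_copairing}; the argument is insensitive to this choice of phrasing.
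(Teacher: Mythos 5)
Your proposal is correct and follows essentially the same route as the paper: compose the three adjunctions $\ladj{p}\dashv p$, $i\dashv\radj{i}$, and $(+)\dashv\Delta$, then verify the base compatibility via the chain $p(\ladj{p}\param+iE_0)\cong p\ladj{p}\param+p(iE_0)\cong\param+\initial\cong\param$, using coproduct preservation, full faithfulness of $\ladj{p}$, and the fact that $p\circ i$ is constant at $\initial$. You are in fact somewhat more thorough than the paper, which omits the check that the unit and counit are compatible with the projections and the remark about when $+$ is a genuine endofunctor on $\fibTotal\times\fibTotal$.
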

%
%
\begin{proof}
	Adjunctions compose, so \eqref{composite_adj} is a pair of adjoint functor between $\params\times \fibTotal_\initial$ and $\fibTotal$. Then, it is clear that $\langle p,\radj{i}\rangle$ is a $1$-cell in $\Cat/\params$. As for its left adjoint, we can calculate, given $(\param,E_0) \in \params \times \fibTotal_\initial$, that
	\[
		p(\ladj{p}\param + iE_0) \cong p(\ladj{p}\param) + p(iE_0) \cong \param + \initial \cong \param = \pi_\params(\param,E_0) \,.
	\]
	Here we used the assumption that $p$ preserves finite coproducts, the assumption that $p \circ \ladj{p} \cong 1_\params$ due to $\ladj{p}$ being full and faithful, and that $p\circ i$ is constantly initial due to $i$ being the inclusion of the initial fibre of $p$.
\end{proof}

\begin{remark}
	The importance of the condition on $T_\initial$ is that the initial fibre of the \textEM fibration of $T$ will be equivalent to the category $\carriers$ of carriers, which is precisely what allows us to recover the parametrized monad $T$ from the fibration $p^T$ of its algebras.

    In particular, the forgetful functor $V : \algTotal{\params}{\carriers} \too \carriers$ can be recovered, up to isomorphism, as the right adjoint $i^T_R : \algTotal{\params}{\carriers} \too \carriers$ of the inclusion $i^T$ of the initial fibre.
\end{remark}

We are now ready to give a description of the mapping from fibrations to parametrized monads.
\begin{construction}\label{constr_param_from_fib}
	Given a category $\params$ with an initial object and a fibration $p$ in $\niceFib{\params}$, we get a parametrized monad $T^p  : \params \to \Mnd{\fibTotal_\initial}$, for which the action of the underlying functor is
	\[
		T^p_\param (E_0) = \radj{i}(\initial_A + E_0) =  \radj{i}(\ladj{p}\param + iE_0) \,.
	\]
	This parametrized monad will belong to $\ParMndInit{\params}$, since due to  $\initial_\initial = \ladj{p}(\initial)$ we can see that  $T^p_\initial$ is isomorphic to the identity monad.
 
	The monad $T^p$ is induced by the adjunction of \autoref{lemma:_fibrations_to_adjunctions}.
    For future convenience, let us explicitly give the monad structure on $T^p_\param$:

	\begin{itemize}
		\item the unit is obtained from the first coproduct injection as the composition
		      \[
			      \vxy{
			      \eta_\carrier : \carrier \ar[r]^{\eta_\carrier^{i}} & \radj{i}i\carrier \ar[r]^-{\radj{i}(\kappa')} & \TP \carrier
			      }
		      \]
		      where $\eta^{i}$ is the unit of the adjunction $i\dashv \radj{i}$ (which is invertible, as the left adjoint is fully faithful).
		\item multiplication is obtained from the counit $\varepsilon^{i}$ of $i\dashv \radj{i}$ and the `fold' map $\ladj{p}\param+\ladj{p}\param\to \ladj{p}\param$, \ie as the composition
		      \[\vxy{
			      \TP{(\TP \carrier)} \ar[d]^{\TP{(\varepsilon_X^{i } + i\carrier)}} \\
			      \radj{i}(\ladj{p}\param + (\ladj{p}\param + i\carrier)) \ar[d]^{\radj{i}(\nabla + i\carrier)} \\ \TP \carrier
			      }\]
	\end{itemize}
\end{construction}
For similarly convenient purposes, let's record the fundamental diagram the monad $T^p$ fits into:
\[\vxy{
	\fibTotal_\varnothing \pair i{\radj{i}} & \fibTotal
	\ar@<.5em>[d]^{\pairing{\radj{i}}{p}}
    \ar@{}[d]|-{\dashv}
    \ar@<-.5em>[r]_-{p}
	\ar@{}[r]|-{\perp}
	\ar@<.5em>@{<-}[r]^-{\ladj p}
	& \params \\
	& \clE_\varnothing\times\params \ar`l[l]_(.9){T^p}[ul]
    \ar@<.5em>[u]^{\coPairSum{i}{\ladj{p}}}
	}\]

We can finally state the theorem that allows us to measure how close a fibration is to being an \textEM fibration.
\begin{theorem} \label{theorem:_the_fibration-monad_adjunction}
	Given a category $\params$ admitting an initial object, there exists a 2-ad\-junc\-tion as in the left here,
	\[\label{pruned_adjunction}
 \vxy[@R=1mm]{
 & & \fibTotal \ar[rr]^{\eta_p} \ar[ddr]_{p} && \algTotal[T_p]{\params}{\fibTotal_\initial}\ar[ddl]^{p^T}\\ 
 \niceFib{\params} \pair{p \mapsto T^p}{p^T \mapsfrom T} & \ParMndInit{\params} \\ 
 & && \params &
 } 
	\]
	making the unit a fibered functor, commuting with $p, p^T$ as in the right triangle. 

	Such unit is induced by the terminal object property of the \textEM construction.

	Moreover, the unit $\eta_p$ is an equivalence precisely when $p$ is the \textEM fibration of a parametrized monad $T$ for which $T_\initial$ is isomorphic to the identity monad.
\end{theorem}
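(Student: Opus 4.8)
The plan is to realize both legs of \eqref{pruned_adjunction} as $2$-functors and then establish the adjunction by producing, naturally in $p$ and $T$, an equivalence of hom-categories $\niceFib{\params}(p, p^T) \simeq \ParMndInit{\params}(T^p, T)$. That $T \mapsto p^T$ is a $2$-functor $\ParMndInit{\params} \to \niceFib{\params}$ comes by viewing $\ParMndInit{\params}$ as the sub-$2$-category of $\Cat\ltimes^{\ell,\EM}\Cat$ (\autoref{same_but_4monads}) of pruned parametrized monads with parameter category $\params$ and identity parameter-components, and restricting the fibration-of-algebras $2$-functor; it lands in $\niceFib{\params}$ because, for $T$ pruned, $p^T$ has a full and faithful left adjoint (an initial object of $\carriers$ gives an initial object $\initial_A$ in each fibre $\EM(T_A)$), and since $T_\initial \cong \id[\carriers]$ makes reindexing along $\initial \to A$ the forgetful functor $\EM(T_A) \to \carriers$, its left adjoint --- the free algebra functor --- computes the opreindexing; so by \autoref{prop:fib_for_colim} the coproduct $\initial_A + i^T E_0$ in $\algTotal[T]{\params}{\carriers}$ exists and equals the free $T_A$-algebra on the carrier of $E_0$, which lies over $A + \initial \cong A$ and is thus preserved by $p^T$. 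Dually $p \mapsto T^p$ is \autoref{constr_param_from_fib}; a fibered functor $H \colon p \to q$ over $\params$ restricts to the initial fibres to give the carrier-component of a morphism $T^p \to T^q$, whose structure $2$-cell is forced by naturality of $H$ together with the (invertible) unit of $i \dashv \radj{i}$, the monad-morphism axioms of \autoref{same_but_4monads} holding because $H$ commutes with the coproducts $\initial_A + E_0$ and with $\radj{i}$; $2$-cells are handled the same way.

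Next I would construct the unit. By \autoref{theorem:_product_presentation}, $p^{T^p}$ is the internal \textEM-object, in $\Cat/\params$, of the monad $\widehat{T^p}$ on the projection $\var[\pi_\params]{\params \times \fibTotal_\initial}{\params}$, and the composite adjunction of \autoref{lemma:_fibrations_to_adjunctions}, namely $\coPairSum{\ladj{p}}{i} \dashv \pairing{p}{\radj{i}}$, is an adjunction in $\Cat/\params$ whose generated monad is exactly $\widehat{T^p}$. The terminal (universal) property of the \textEM-object then yields a unique $1$-cell $\eta_p \colon \fibTotal \too \algTotal[T^p]{\params}{\fibTotal_\initial}$ in $\Cat/\params$ commuting with the two forgetful $1$-cells to $(\params \times \fibTotal_\initial, \pi_\params)$; explicitly it sends $E$ over $A = pE$ to the $T^p_A$-algebra with carrier $\radj{i} E$ and structure map the one recorded after \autoref{constr_param_from_fib}. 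Being a $1$-cell of $\Cat/\params$ it commutes with $p$ and $p^T$, which is the right-hand triangle of \eqref{pruned_adjunction}. It is fibered: for $f \colon A \to A'$ and a cartesian $\phi \colon E \to E'$ over $f$ one has $E \cong f^\ast E'$, hence $\radj{i} E \cong \radj{i} E'$ (reindexing to the initial fibre along $\initial \to A \to A'$ factors through $\initial \to A'$), and since reindexing in $p^{T^p}$ does not move carriers (\autoref{more_precisely}), $\eta_p(\phi)$ is a morphism over $f$ with invertible carrier component which one checks to be a vertical isomorphism composed with the cartesian lift of $\eta_p E'$, hence cartesian.

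Then I would build the hom-equivalence. In the forward direction, $(V,\delta) \in \ParMndInit{\params}(T^p,T)$ gives $(\id[\params] \ltimes (V,\delta)) \circ \eta_p$, with $\id[\params] \ltimes (V,\delta)$ as in the proof of \autoref{yoink_a_2_fibration}. In the backward direction, given a fibered $H \colon \fibTotal \too \algTotal[T]{\params}{\carriers}$ over $\params$, I take $V := H|_{\fibTotal_\initial} \colon \fibTotal_\initial \to \EM(T_\initial) \simeq \carriers$ (using $T_\initial \cong \id[\carriers]$), and read off the structure $2$-cell from the $H$-image of the coproduct injection $i E_0 \to \ladj{p} A + i E_0$ in $\fibTotal$: as $H$ is fibered it carries $\radj{i}$ to the forgetful functor of $p^T$, so $H(\ladj{p} A + i E_0)$ is a $T_A$-algebra with carrier $V(T^p_A E_0)$, and postcomposing (after applying $T_A$) the carrier map $V E_0 \to V(T^p_A E_0)$ of that $H$-image with this algebra structure defines the component $\delta_{A, E_0} \colon T_A(V E_0) \to V(T^p_A E_0)$. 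That $(V, \delta)$ satisfies the monad-morphism axioms, and that $(\id[\params] \ltimes (V, \delta)) \circ \eta_p = H$, both reduce to the universal property of the coproducts $\ladj{p} A + i E_0$ together with the explicit monad structure of $T^p$ recorded after \autoref{constr_param_from_fib}. I expect this reconstruction to be the main obstacle: it is exactly here that all three defining clauses of a pruned fibration are used in an essential way, and it must be carried out compatibly with $2$-cells so as to obtain a genuine $2$-adjunction and not merely a $1$-adjunction. The unit of the resulting $2$-adjunction is the image of $\id[{T^p}]$, \ie $\eta_p$.

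Finally, for the characterization: rerunning the first paragraph's computation with $p = p^T$ shows $T^{p^T}_A X \cong T_A X$, naturally in $A$ and $X$ and compatibly with units and multiplications, so the counit $\varepsilon_T \colon T^{p^T} \to T$ of the $2$-adjunction is an equivalence in $\ParMndInit{\params}$. By a triangle identity $p^{\varepsilon_T} \circ \eta_{p^T} = \id[{p^T}]$, where $p^{\varepsilon_T}$ denotes the image of $\varepsilon_T$ under $T \mapsto p^T$; since $\varepsilon_T$ is an equivalence so is $p^{\varepsilon_T}$, whence $\eta_{p^T}$ is a section of an equivalence and is therefore itself an equivalence, and by naturality of $\eta$ so is $\eta_p$ for any $p$ equivalent in $\Fib(\params)$ to some such $p^T$. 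Conversely, if $\eta_p$ is an equivalence then $p$ is equivalent to $p^{T^p}$ with $T^p \in \ParMndInit{\params}$, so in particular $T^p_\initial \cong \id[\carriers]$. This is exactly the stated characterization.
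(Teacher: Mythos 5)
Your proposal is correct and follows essentially the same route as the paper: the left adjoint is \autoref{constr_param_from_fib}, the right adjoint is the fibration-of-algebras 2-functor of \autoref{yoink_a_2_fibration} restricted to pruned monads, the unit is the \textEM comparison 1-cell in $\Cat/\params$ for the monad generated by $\coPairSum{\ladj{p}}{i} \dashv \pairing{p}{\radj{i}}$, and your reconstruction of a monad morphism $(V,\delta)$ from a fibered $H$ --- by restricting to the initial fibre and using the coproduct injections $iE_0 \to \ladj{p}A + iE_0$ together with the Beck--Chevalley cells --- is exactly the content of the paper's diagram \eqref{magic_diagram}. The only differences are presentational: you package the adjunction as a hom-category equivalence where the paper shows $\eta_p$ is a reflection (a universal arrow), and you additionally verify, where the paper only asserts, that $T \mapsto p^T$ actually lands in pruned fibrations.
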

\begin{proof}
  \autoref{constr_param_from_fib} yields a parametrized monad $T^p$ out of a pruned fibration, and this defines the left adjoint on objects;
the functoriality (on 1-cells) follows from the fact that given a morphism of fibrations $H : \clE \to \algTotal\params\carriers$ fibred over $\params$, and thus the functor between the fibers $H_\varnothing : \clE_\varnothing \to \algTotal\varnothing\carriers = \carriers$ (\ie the category of algebras of the identity monad), one can build the diagram
\[\label{magic_diagram}\vxy{
\params\times\clE_\varnothing \ar@{->}[rr]^{T^p} \ar@{->}[ddd]_{\id\times H_\varnothing} \ar@{->}[rd]_{p_L+i} &  & \clE_\varnothing \ar@{->}[ddd]^{H_\varnothing} \\
& \clE \ar@{->}[d]_H\ar@{}[u]|{\textsc{i}} \ar@{->}[ru]_{i_R} & \\
& \params\ltimes^T\carriers \ar@{}[d]|{\textsc{ii}}\ar@{->}[rd]^U \ultwocell<\omit>{\varphi} & \ultwocell<\omit>{\textsc{bc}} \\
\params\times\carriers \ar@{->}[rr]_T \ar@{->}[ru]^F &  & \carriers
}\]

An intrinsic choice of a functor isomorphic to $H_\varnothing$ is as the composition  
\[\vxy{\clE_\varnothing \ar[r]^-i & \clE \ar[r]^-H & \algTotal\params\carriers \ar[r]^-V & \carriers}\]
($i$ includes the fibre, while $V$ is the forgetful functor of \eqref{quaglia}, choosing the carrier of a parametrized algebra). 

More in detail:
\begin{itemize}
\item the right cell ``\textsc{bc}'' is filled by an isomorphism obtained from the fact that $\fibMor$ is a morphism of fibrations (and thus has components $H_A : \clE_A\to \algTotal A\carriers$ that satisfy the Beck-Chevalley condition), 
\item the left 2-cell is obtained from the freeness of the algebra, as follows 
  \[U*\varphi = UF(A,H_\varnothing X) \xto{a} H_\varnothing X \hookrightarrow Hp_LA + H_\varnothing X \cong UH(p_L+i)(A,X).\]
\item the two triangles are strictly commutative, as they are the factorization of the monads $T$ and $T^p$. 
\end{itemize}
Now, the 2-cell so obtained splits, thanks to the definition of $H_\varnothing$, into smaller 2-cells each of which is commutative. Then, call $\theta$ the 2-cell so obtained; in order to prove that $(\id,H_\varnothing,\theta)$ is a morphism of parametrized monads (cf. \autoref{same_but_4monads}), we have to show that the two pasting diagrams of \eqref{par_mnd_axioms}
are commutative. This can be checked by hand.

Now, on 2-cells, a vertical natural transformation $\xymatrix{X \rtwocell^H_K{\alpha} & Y}$ must induce a 2-cell between parametrized monads; $\alpha$ certainly induces a natural transformation $\alpha_\varnothing : H_\varnothing \To K_\varnothing$, and it can be checked that it satisfies the commutativities of \autoref{same_but_4monads} required to be a monad 2-cell.

In the opposite direction, functoriality (on 1- and 2-cells) of the assignment $(\params,\carriers,T)\mapsto \algTotal[T]\params\carriers$ follows from \autoref{yoink_a_2_fibration}, adapted to monads.

Now, let's turn to the proof that the canonical map $\eta_p$ in
\[\vxy{
	\fibTotal
	\ar[rr]^{\eta_p}
	\ar[dr]_{\pairing{p}{\radj{i}}}
	& & \algTotal[T_p]{\params}{\initialFibre{\fibTotal}}
	\ar[dl]^{\pairing{p^T}{\radj{i^T}}} \\
	& \params \times \initialFibre{\fibTotal} &
}\]
obtained as comparison functor into the \textEM category $\params \times \initialFibre{\fibTotal}$ of the monad $\pairing{p}{\radj{i}} \circ \coPairSum{\ladj{p}}{i}$ is a reflection; this will conclude the proof, as the counit of the adjunction consists of the identification 
\[T \cong \text{the monad obtained from $p^T$ using \eqref{constr_param_from_fib}}\]
and thus is invertible and the adjunction identities simplify into the requirement that every span of morphisms in $\Cat/\clA$ like 
\[\vxy{
\clE\ar[d]_{\eta_p}\ar[r]^-K & \algTotal[S]{\params}{\carriers}\\ 
\algTotal[T_p]{\params}{\initialFibre{\fibTotal}}\ar@{.>}[ur]_{\bar K}
}\]
(where $S : \params\times\carriers\to\carriers$ is another pruned monad) has a unique `extension' $\bar K$ along $\eta_p$. To prove this, one argues building up a diagram similar to \eqref{magic_diagram}: a functor $\algTotal[T_p]{\params}{\initialFibre{\fibTotal}} \to \algTotal[S]{\params}{\carriers}$ exists by functoriality of $\algTotal{-}{-}$ construction if we build a morphism $(\params,\initialFibre{\fibTotal},T_p)\to (\params,\carriers,S)$ in $\algTotal[\ell]{\Cat}{\Cat}$, which is realized by $(\id[\clA],K_\initial;\theta)$ where $\theta$ is a natural transformation 
\[\vxy{
\params\times\initialFibre{\fibTotal}\ar[d]_{\id\times K_\initial}\ar[r]^-{T^p} & \initialFibre{\fibTotal} \ar[d]^{K_\initial}\\ 
\params\times\carriers \ar[r]_-S & \carriers\ultwocell<\omit>{\theta}
}\]
obtained filling a square in all similar to \eqref{magic_diagram}. This concludes the proof.
\end{proof}

\begin{remark}
	The failure of the unit $\eta_p$ in \autoref{theorem:_the_fibration-monad_adjunction} to be an equivalence measures the failure of $p$ to be an \textEM fibration for a $T$ with $T_\initial \cong \id$. One could equivalently measure the failure of the functor $\langle p,\radj{i}\rangle  : \fibTotal \to \params \times \fibTotal_\initial$ to be monadic.
\end{remark}
The result in \autoref{theorem:_the_fibration-monad_adjunction} lends itself to a straightforward dualization: an opfibration $\var[p]\clE\clB$ is pruned if the terminal fibre $\clE_1$ has an initial and terminal object (so $\clE_1$ is reflective with reflector $\ladj i$), and if $p$ has a right adjoint $\radj p$ and preserves products. Similarly, a pruned comonad $S : \params\times\carriers\too\carriers$ is such that $S_1$ is the identity functor. Then,
\begin{theorem}
	There is an adjunction
	\[\label{copruned_adjunction}
		\vxy{ \niceOpFib{\params} \oppair{p^S \mapsfrom S}{p \mapsto S^p}& \ParCoMndInit{\params} }
	\]
	between pruned opfibrations and pruned comonads, sliced over $\params$, and such that a fibration is the co\textEM opfibration of a parametrized pruned comonad if and only if the counit of said adjunction is invertible. The comonad is obtained as the comonad associated to the adjunction $\langle p,\ladj i\rangle\dashv (p_R\times i)$
\end{theorem}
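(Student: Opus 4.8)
The plan is to obtain the statement as the formal \emph{fibrewise} dual of \autoref{theorem:_the_fibration-monad_adjunction}: push the proof of that theorem through the $2$-functor $\Fib(\params)^{\mathrm{co}}\to\opFib(\params)$ that replaces the stalks of the associated prestack by their opposites, and check at each step which adjoints are now required and that the notion of \emph{pruned opfibration} recorded just before the statement is precisely the dual of \autoref{pruned_fib}. So a pruned opfibration is $p\colon\clE\to\params$, with $\params$ having a terminal object $1$, whose terminal fibre $\clE_1$ has an initial and a terminal object, with $p$ admitting a right adjoint $\radj{p}$ and preserving finite products; and a pruned comonad is $S\colon\params\times\carriers\to\carriers$ with $S_1\cong\id$ and $\carriers$ having a terminal object. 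The first move is to dualise \autoref{prop:adjoint_of_kernel}: for an opfibration the cocartesian lifts along the unique maps $p(E)\to 1$ assemble into a \emph{left} adjoint $\ladj{i}\colon\clE\to\clE_1$ to the inclusion $i\colon\clE_1\hookrightarrow\clE$ of the terminal fibre. Then \autoref{constr_param_from_fib} dualises on the nose: one sets
\[
S^p_\param(E_1)\;\coloneqq\;\ladj{i}\bigl(\radj{p}\param\times iE_1\bigr),
\]
with counit coming from the product projection and comultiplication from the diagonal $\radj{p}\param\to\radj{p}\param\times\radj{p}\param$ together with the counit of $\ladj{i}\dashv i$ --- the recipes of \autoref{constr_param_from_fib} with $(+,\nabla,\eta^{i})$ replaced by $(\times,\Delta,\varepsilon^{i})$. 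Since the terminal object of $\clE_1$ is $\radj{p}(1)$, one reads off $S^p_1\cong\id$, so $S^p$ lands in $\ParCoMndInit{\params}$.

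The second move is to dualise \autoref{lemma:functor_copairing} and \autoref{lemma:_fibrations_to_adjunctions}. From the cospan of right adjoints $\carriers\xrightarrow{i}\clE\xleftarrow{\radj{p}}\params$ one obtains, as soon as the products $\radj{p}\param\times iE_1$ exist in $\clE$, an adjunction $\langle p,\ladj{i}\rangle\dashv(\radj{p}\times i)$ (shorthand for $(\param,E_1)\mapsto\radj{p}\param\times iE_1$); and when $p$ preserves finite products the projection $\params\times\clE_1\to\params$ becomes a morphism of adjunctions to $\id[\params]$ in $\Cat/\params$, so the comonad induced on $\params\times\clE_1$ is fibred over $\params$, hence a parametrized comonad, and it agrees with $S^p$. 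This is exactly the comonad ``associated to the adjunction $\langle p,\ladj{i}\rangle\dashv(\radj{p}\times i)$'' of the statement. As in \autoref{theorem:_the_fibration-monad_adjunction}, this adjunction is internal only to $\Cat/\params$, not to $\opFib(\params)$, since the left adjoint need not preserve cocartesian lifts.

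The third move assembles the $2$-adjunction. Applying $(-)^{\mathrm{co}}$ to $\bigl((p\mapsto T^p)\dashv(T\mapsto p^T)\bigr)$ swaps the two legs, so here the left adjoint is $S\mapsto p^S$, the co\textEM opfibration of a pruned comonad, and the right adjoint is $p\mapsto S^p$. Functoriality on $1$- and $2$-cells is the comonadic dual of \autoref{yoink_a_2_fibration}: there is a $2$-category --- the co\textEM variant of \autoref{same_but_4monads}, obtained by reversing the relevant $2$-cells so that its $1$-cells are the evident (op)lax morphisms of coalgebras --- out of which $(\params,\carriers,S)\mapsto\coalgTotal[S]{\params}{\carriers}$ is a $2$-functor, and the dual ``magic diagram'' of \eqref{magic_diagram} produces from each fibred $K\colon p^{S}\to p$ the required morphism of parametrized comonads. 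The counit $\varepsilon_p\colon p^{S^p}\to p$ of $L'\dashv R'$ is, after straightening the triangle identities, the inverse direction of the comparison functor $\clE\to\coalgTotal[S^p]{\params}{\clE_1}$ induced by the universal property of the co\textEM object of $S^p$ (dual to \autoref{theorem:_product_presentation}); hence $\varepsilon_p$ is invertible exactly when that comparison is an equivalence, i.e.\ exactly when $p$ is a co\textEM opfibration of a pruned comonad, and the unit $S\to S^{p^S}$ is always invertible (dual to the counit identification $T^{p^T}\cong T$ of the original proof). The verification that $\varepsilon_p$ so described really satisfies the couniversal property of the counit is the exact dual of the uniqueness-of-extension argument carried out in \autoref{theorem:_the_fibration-monad_adjunction}, with cofreeness of $S$-coalgebras in place of freeness of $T$-algebras and the Beck--Chevalley isomorphisms for the opfibration $K$.

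The part I expect to carry the real work is twofold. First, one must confirm that ``pruned opfibration'' is stable under $S\mapsto p^S$ and that the dual of \autoref{lemma:_fibrations_to_adjunctions} genuinely needs only preservation of finite products by $p$ (together with existence of the products $\radj{p}\param\times iE_1$), with no cartesian-lift hypothesis sneaking back in. Second, the $2$-categorical bookkeeping: getting the handedness of the (op)lax $1$-cells right when dualising \autoref{same_but_4monads} to comonads, and rechecking that the dualised \eqref{magic_diagram} still pastes together into a bona fide morphism of parametrized comonads (the two coherence pastings, now the comonad axioms, must still commute) --- this is the one spot where a direction-of-$2$-cell slip is easy to make. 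Everything else is a faithful transcription of \autoref{theorem:_the_fibration-monad_adjunction} and its proof.
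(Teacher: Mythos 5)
Your proposal is correct and takes essentially the same route as the paper, which offers no independent argument for this statement beyond asserting that \autoref{theorem:_the_fibration-monad_adjunction} ``lends itself to a straightforward dualization'' and recording the dual notions of pruned opfibration and pruned comonad. Your write-up is simply a detailed execution of exactly that dualization (terminal fibre, reflector $\ladj{i}$ from cocartesian lifts, the comonad of $\langle p,\ladj{i}\rangle\dashv(\radj{p}\times i)$, and the dualised functoriality and comparison arguments), matching the paper's intent.
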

\subsection{Examples of the fibration to parametrized monad construction}

Instances of our \autoref{theorem:_the_fibration-monad_adjunction} can be found in the literature in a number of different contexts.

\begin{example}[Artin glueing]
	Given a finite limit preserving functor $F  : \clE\too \clE'$ between elementary toposes, the \emph{Artin glueing} \cite{Wraith1974} of $\clE,\clE'$ along $F$ is defined as the comma category $\clE'/ F$. This arises as the total category of the co\textEM opfibration of the parametrized comonad $\clE \to \coMnd{\clE'}$, $E \mapsto  FE\times \firstblank$.

	In \cite{Faul2023}, the authors consider `adjoint split extensions of toposes', which, they show, always arise from the Artin glueing construction. An adjoint split extension of toposes is a diagram of toposes and functors, where one of the functors can be shown to be an opfibration. An application of the dual of \autoref{theorem:_the_fibration-monad_adjunction} to that opfibration yields precisely a parametrized comonad of the form $E \mapsto FE\times \firstblank$, and the unit $\eta_p$ will be an equivalence.

	This is an instance of a setting where the category $\algTotal{\clE}{\clE'}$ of coalgebras truly plays the role of a semidirect product for a class of categories.
\end{example}

Another class of applications of \autoref{theorem:_the_fibration-monad_adjunction} can be found where for a fibration $p$, depending on a parameter, the unit $\eta_p$ need not be an equivalence, but one is particularly interested in those parameters for which $p$ is is an equivalence or satisfies some other condition.
As an instance of this, we have the following example from categorical algebra. An account on this which is close to ours can be found in
\cite{BJK:internal_object_actions}.
\begin{example}[The fibration of points]
	The \emph{freestanding split epi} is the category generated by the graph $r  : 0 \rightleftarrows 1 : s$,
	subject to the relations requiring that $r\circ s=\id[1]$ (and thus $s\circ r$ is an idempotent). A \emph{point} in a category $\params$ is a functor $P$ from the freestanding split epi to $\params$. Letting $\pointsTotal{\params}$ denote the category of points in $\params$, the evaluation functor $P \mapsto P1$ yields a fibration $p  : \pointsTotal{\params} \to \params$ as long as $\params$ has pullbacks along split epimorphisms. This is known as the \emph{fibration of points} of $\params$ and its properties are central to a lot of categorical algebra.

	If the category $\params$ has a zero object and finite coproducts, then applying \autoref{theorem:_the_fibration-monad_adjunction} to the fibration of points yields a parametrized monad $T^p  : \params \to \Mnd{\params}$, commonly denoted by $\param \flat \carrier \coloneqq T^p_\param\carrier$. The left adjoint $\algTotal{\params}{\params} \to \pointsTotal{\params}$ of $\eta_p$ exists if $\params$ has reflexive coequalizers, and is interpreted as the functor that maps an action to its semidirect product.

	If $\eta_p$ is an equivalence, then $\params$ is said to be \emph{a category with semidirect products} \cite{Bourn1998}. If $\eta_p$ merely reflects isomorphisms, then the category $\params$ is said to be \emph{protomodular}. Note that protomodularity is more commonly, but equivalently, defined as reflection of isomorphisms by the functor $(p,i^R)  : \pointsTotal{\params} \to \params \times \params$.

\end{example}

\begin{example}[Fibrational models of Martin-Löf type theory]
	In the context of fibrational models of Martin-Löf type theory, if we have a \emph{(split) comprehension category with unit} \cite[Def. 10.4.7]{CLTT} which is moreover a bifibration (\ie, it models dependent sums along all morphisms in its base category), then we have a bifibration $p  : \fibTotal \to \params$ and adjunctions $p \dashv 1 \dashv \{\firstblank\}$. Applying the dual of Theo\-rem~\ref{theorem:_the_fibration-monad_adjunction} to $p$ viewed as an opfibration yields the parametrized comonad
	\[
		\params \to \coMnd{\params}\,, \quad A \mapsto A \times \firstblank
	\]
	of Example~\ref{fib_reg_rep},
	whose opfibration of co\textEM coalgebras is the codomain opfibration $\params^\rightarrow \to \params$.
	The unit $\eta_p  : \fibTotal \to \params^\rightarrow$ is the full and faithful functor (called the \emph{full comprehension category} in models of Martin-Löf type theory \cite[Def. 10.4.2]{CLTT}) that exhibits $\fibTotal$ as the category of display maps.
\end{example}

\bibliographystyle{amsalpha}
\bibliography{main}{}

\providecommand{\bysame}{\leavevmode\hbox to3em{\hrulefill}\thinspace}
\providecommand{\MR}{\relax\ifhmode\unskip\space\fi MR }
\providecommand{\MRhref}[2]{%
  \href{http://www.ams.org/mathscinet-getitem?mr=#1}{#2}
}
\providecommand{\href}[2]{#2}
\begin{thebibliography}{EKKK74}

\bibitem[AM20]{Aguiar2020}
M.~Aguiar and S.~Mahajan, \emph{Bimonoids for hyperplane arrangements},
  Cambridge University Press, Cambridge, UK, 02 2020.

\bibitem[BJ98]{Bourn1998}
D.~Bourn and G.~Janelidze, \emph{Protomodularity, descent, and semidirect
  products.}, Theory and Applications of Categories \textbf{4} (1998), 37--46
  (eng).

\bibitem[BJK05]{BJK:internal_object_actions}
F.~Borceux, G.~Janelidze, and G.M. Kelly, \emph{Internal object actions},
  Comment. Math. Univ. Carolin. \textbf{46} (2005), no.~2, 235--255.

\bibitem[BKP89]{blackwell1989two}
R.~Blackwell, G.M. Kelly, and A.J. Power, \emph{Two-dimensional monad theory},
  Journal of pure and applied algebra \textbf{59} (1989), no.~1, 1--41.

\bibitem[BLLL23]{noi:completeness}
G.~Boccali, A.~Laretto, F.~Loregian, and S.~Luneia, \emph{{Completeness for
  Categories of Generalized Automata}}, 10th Conference on Algebra and
  Coalgebra in Computer Science (CALCO 2023) (Dagstuhl, Germany), vol. 270,
  2023, pp.~20:1--20:14.

\bibitem[Bor94]{Bor2}
F.~Borceux, \emph{Handbook of categorical algebra 2}, Encyclopedia of
  Mathematics and its Applications, vol.~51, Cambridge University Press,
  Cambridge, UK, 1994, Categories and structures.

\bibitem[CHK85]{CHK}
C.~Cassidy, M.~Hébert, and G.M. Kelly, \emph{Reflective subcategories,
  localizations and factorization systems}, J. Austral. Math. Soc. Ser. A
  \textbf{38} (1985), no.~3, 287--329.

\bibitem[DMS19]{Dorsch2018GradedMA}
U.~Dorsch, S.~Milius, and L.~Schr\"{o}der, \emph{{Graded Monads and Graded
  Logics for the Linear Time - Branching Time Spectrum}}, 30th International
  Conference on Concurrency Theory (CONCUR 2019) (Dagstuhl, Germany), LIPIcs,
  vol. 140, 2019, pp.~36:1--36:16.

\bibitem[EGNO15]{EGNO15}
P.~Etingof, S.~Gelaki, D.~Nikshych, and V.~Ostrik, \emph{Tensor categories},
  Mathematical Surveys and Monographs, vol. 205, American Mathematical Society,
  Providence, RI, 2015. \MR{3242743}

\bibitem[EKKK74]{Ehrig}
H.~Ehrig, K.-D. Kiermeier, H.-J. Kreowski, and W.~K{\"u}hnel, \emph{Universal
  theory of automata. {A} categorical approach}, Teubner {Studienb{\"u}cher}.
  {Informatik}., Stuttgart, Germany, 1974 (English).

\bibitem[Far23]{Farmer2023}
W.M. Farmer, \emph{Simple type theory: A practical logic for expressing and
  reasoning about mathematical ideas}, Springer International Publishing, New
  York City, US, 2023.

\bibitem[FMS23]{Faul2023}
P.~F. Faul, G.~Manuell, and J.~Siqueira, \emph{Artin glueings of toposes as
  adjoint split extensions}, Journal of Pure and Applied Algebra \textbf{227}
  (2023), no.~5, 107273.

\bibitem[Fra10]{Frankland}
M.~Frankland, \emph{Fibered category of beck modules}, 04 2010,
  \url{https://uregina.ca/~franklam/Frankland_BeckModules_20100411.pdf} (online
  draft).

\bibitem[Fra21]{Fran1}
\bysame, \emph{The monoidal fibered category of beck modules}, Princeton
  University,
  \url{https://uregina.ca/~franklam/Frankland_Princeton_20210325.pdf}, 03 2021.

\bibitem[Fuj19]{fujii20192categorical}
S.~Fujii, \emph{A 2-categorical study of graded and indexed monads}, 2019,
  \arXivPreprint{1904.08083}.

\bibitem[GK13]{gambino_kock_2013}
N.~Gambino and J.~Kock, \emph{Polynomial functors and polynomial monads},
  Mathematical Proceedings of the Cambridge Philosophical Society \textbf{154}
  (2013), no.~1, 153–192.

\bibitem[GKV16]{GKV:hopf_torsion}
M.~Gran, G.~Kadjo, and J.~Vercruysse, \emph{A torsion theory in the category of
  cocommutative {H}opf algebras}, Appl. Categ. Structures \textbf{24} (2016),
  no.~3, 269--282.

\bibitem[Hed18]{Hedges2018}
J.~Hedges, \emph{Morphisms of open games}, Electronic Notes in Theoretical
  Computer Science \textbf{341} (2018), 151--177.

\bibitem[Jac98]{CLTT}
B.~Jacobs, \emph{Categorical logic and type theory}, Studies in Logic and the
  Foundations of Mathematics, Elsevier Science, London, England, December 1998.

\bibitem[Jac16]{Jacobs2016}
\bysame, \emph{Introduction to coalgebra}, Cambridge University Press,
  Cambridge, UK, October 2016.

\bibitem[Jan65]{jans1965}
J.P. Jans, \emph{Some aspects of torsion}, Pacific J. Math. \textbf{15} (1965),
  no.~4, 1249--1259.

\bibitem[Jan03]{Janelidze+2003+99+114}
G.~Janelidze, \emph{Internal crossed modules}, Georgian Mathematical Journal
  \textbf{10} (2003), no.~1, 99--114.

\bibitem[JT07]{janelidze2007characterization}
G.~Janelidze and W.~Tholen, \emph{Characterization of torsion theories in
  general categories}, Categories in Algebra, Geometry and Mathematical
  Physics: Conference and Workshop in Honor of Ross Street's 60th Birthday,
  vol. 431, American Mathematical Soc., 2007, p.~249.

\bibitem[JY21]{Johnson2021pn}
N.~Johnson and D.~Yau, \emph{2-dimensional categories}, Oxford University
  Press, Oxford, UK, 1 2021.

\bibitem[Kel89]{kelly_1989}
G.M. Kelly, \emph{Elementary observations on 2-categorical limits}, Bulletin of
  the Australian Mathematical Society \textbf{39} (1989), no.~2, 301–317.

\bibitem[KKM00]{Kilp2000}
M.~Kilp, U.~Knauer, and A.V. Mikhalev, \emph{Monoids, acts and categories},
  {De} {Gruyter}, Berlin, Germany, December 2000.

\bibitem[Kos77]{Kostant1977}
B.~Kostant, \emph{Graded manifolds, graded {L}ie theory, and prequantization},
  Lecture Notes in Mathematics, Springer Berlin Heidelberg, Berlin, Germany,
  1977, pp.~177--306.

\bibitem[Lam98]{lamlam}
T.Y. Lam, \emph{Lectures on modules and rings}, 1999 ed., Graduate Texts in
  Mathematics, Springer, New York, NY, October 1998 (en).

\bibitem[Lin69]{Linton_coeqs}
F.E.J. Linton, \emph{Coequalizers in categories of algebras}, Sem. on Triples
  and Categorical Homology Theory, Springer, Berlin, 1969, pp.~75--90. Lecture
  Notes in Math., Vol. 80.

\bibitem[McC00]{075b1350ce224f5e9f9c4e2642823a04}
P.~McCrudden, \emph{Categories of representations of coalgebroids}, Advances in
  Mathematics (2000), 299--332 (English).

\bibitem[Mog91]{MOGGI199155}
E.~Moggi, \emph{Notions of computation and monads}, Information and Computation
  \textbf{93} (1991), no.~1, 55--92, Selections from 1989 IEEE Symposium on
  Logic in Computer Science.

\bibitem[MP89]{MP89}
M.~Makkai and R.~Par{\'e}, \emph{Accessible categories: The foundations of
  categorical model theory}, Contemporary Mathematics, vol. 104, American
  Mathematical Society, Providence, Rhode Island, 1989.

\bibitem[MPS15]{milius_et_al:LIPIcs:2015:5538}
S.~Milius, D.~Pattinson, and L.~Schr{\"o}der, \emph{{Generic Trace Semantics
  and Graded Monads}}, 6th Conference on Algebra and Coalgebra in Computer
  Science ({CALCO}2015), LIPIcs 2015, 2015, pp.~253--269.

\bibitem[OR21]{Ozornova2021}
V.~Ozornova and M.~Rovelli, \emph{The {D}uskin nerve of 2-categories in
  {J}oyal's cell category {$\Theta_2$}}, Journal of Pure and Applied Algebra
  \textbf{225} (2021), no.~1, 106462.

\bibitem[Qui70]{Quillone1970}
D.~Quillen, \emph{On the (co-)homology of commutative rings}, Applications of
  {C}ategorical {A}lgebra ({P}roc. {S}ympos. {P}ure {M}ath., {V}ol. {XVII},
  {N}ew {Y}ork, 1968), American Mathematical Society, Providence, Rhode Island,
  1970, pp.~65--87. \MR{257068}

\bibitem[RT07]{RT}
J.~Rosick{\'y} and W.~Tholen, \emph{Factorization, fibration and torsion}, J.
  Homotopy Relat. Struct. \textbf{2} (2007), no.~2, 295--314.

\bibitem[Str72]{Street1972}
R.~Street, \emph{The formal theory of monads}, J. Pure Appl. Algebra \textbf{2}
  (1972), no.~2, 149--168.

\bibitem[Str23]{streicher2023fibred}
T.~Streicher, \emph{Fibred categories à la jean bénabou}, 2023,
  \arXivPreprint{1801.02927}.

\bibitem[Tao09]{Tao_2009}
T.~Tao, \emph{Mazur’s swindle}, 10 2009,
  \url{https://terrytao.wordpress.com/2009/10/05/mazurs-swindle/}.

\bibitem[VG15]{von_glehn_2015}
T.~Von~Glehn, \emph{Polynomials and models of type theory}, Ph.D. thesis,
  University of Cambridge - School of Mathematics, 2015.

\bibitem[Wei94]{Weibel1994}
C.A. Weibel, \emph{An introduction to homological algebra}, Cambridge
  University Press, Cambridge, UK, April 1994.

\bibitem[Wra74]{Wraith1974}
G.~Wraith, \emph{Artin glueing}, Journal of Pure and Applied Algebra \textbf{4}
  (1974), no.~3, 345–348.

\end{thebibliography}

\end{document}